\numberwithin{equation}{section}
\theoremstyle{plain}
\newtheorem{thm}{\protect\theoremname}
  \theoremstyle{remark}
  \newtheorem{rem}[thm]{\protect\remarkname}
  \theoremstyle{definition}
  \newtheorem{example}[thm]{\protect\examplename}
  \theoremstyle{definition}
  \newtheorem{defn}[thm]{\protect\definitionname}
  \theoremstyle{plain}
  \newtheorem{prop}[thm]{\protect\propositionname}
  \theoremstyle{plain}
  \newtheorem{cor}[thm]{\protect\corollaryname}
  \theoremstyle{plain}
  \newtheorem{lem}[thm]{\protect\lemmaname}
\date{}
\newcommand{\onenorm}[1]{\left\lVert#1\right\rVert_1}
\newcommand{\core}{\Delta_{\text{core}}}
\newcommand{\cusp}{\Delta_{\text{cusp}}}
\newcommand{\deltao}{\Delta_{0}}
\newcommand{\on}{\text{ on }}
\newcommand{\wdenom}{\mu (w)}
\newcommand{\wdenomm}{\theta (w)}
\newcommand{\wdenommm}{\kappa (w)}
\newcommand{\wdenomij}{\psi (w)}
\newcommand{\wsum}{\beta (w)}
\theoremstyle{plain}
\author{}
\date{\today}
\newcommand{\FigBesBeg}[1][1.0]{%
 \let\MyFigure\figure
 \let\MyEndfigure\endfigure
 \renewenvironment{figure}[1]{\begin{SCfigure}[#1]##1}{\end{SCfigure}}}
\newcommand{\FigBesEnd}{%
 \let\figure\MyFigure
 \let\endfigure\MyEndfigure}
  \providecommand{\corollaryname}{Corollary}
  \providecommand{\definitionname}{Definition}
  \providecommand{\examplename}{Example}
  \providecommand{\lemmaname}{Lemma}
  \providecommand{\propositionname}{Proposition}
  \providecommand{\remarkname}{Remark}
\providecommand{\theoremname}{Theorem}
\renewcommand*{\thefootnote}{\fnsymbol{footnote}}
\begin{document}

\title{An asymptotic formula for integer points on Markoff-Hurwitz   varieties }

\author{Alex Gamburd\footnote{A. Gamburd was supported in part by NSF award DMS-1603715.}, Michael Magee\footnote{M. Magee was supported in part by NSF award DMS-1701357.}, and Ryan Ronan}

\renewcommand*{\thefootnote}{\arabic{footnote}}

\maketitle
\begin{abstract}
We establish an asymptotic formula for the number of integer solutions
to the Markoff-Hurwitz equation
\[
x_{1}^{2}+x_{2}^{2}+\ldots+x_{n}^{2}=ax_{1}x_{2}\ldots x_{n}+k.
\]
When $n\geq4$ the previous best result is by Baragar (1998) 
that gives an exponential rate of growth with exponent $\beta$ that
is not in general an integer when $n\geq4$. We give a new interpretation
of this exponent of growth in terms of the unique parameter for which
there exists a certain conformal measure on projective space. 

\end{abstract}
\global\long\def\A{\mathcal{A}}
\global\long\def\Haus{\mathrm{Haus}}
\global\long\def\SL{\mathrm{SL}}
\global\long\def\Z{\mathbf{Z}}
\global\long\def\D{\mathcal{\mathfrak{D}}}
\global\long\def\modtor{\mathbb{T}_{mod}}
\global\long\def\tr{\mathrm{tr}}
\global\long\def\Aut{\mathrm{Aut}}
\global\long\def\O{\mathcal{O}}
\global\long\def\P{\mathcal{P}_{r}}
\global\long\def\E{\mathcal{E}}
\global\long\def\o{\vec{o}}
\global\long\def\R{\mathbf{R}}
\global\long\def\H{\mathcal{H}}
\global\long\def\order{\mathsf{order}}
\global\long\def\xtuple{(x_{1},\ldots,x_{n})}
\global\long\def\ytuple{(y_{1},\ldots,y_{n})}
\global\long\def\scale{n^{1/(n-2)}}
\global\long\def\M{\mathcal{M}}
\global\long\def\ztuple{(z_{1},\ldots,z_{n})}
\global\long\def\ordered{(\R_{\geq0}^{n})_{\mathrm{ord}}}
\global\long\def\Log{\mathrm{Log}}
\global\long\def\e{\epsilon}
\global\long\def\acc{\texttt{\texttt{>>}}}
\global\long\def\L{\mathcal{L}}
\global\long\def\Int{\mathrm{Int}}
\global\long\def\N{\mathbf{N}}
\global\long\def\op{\mathrm{op}}
\global\long\def\K{\mathcal{K}}
\global\long\def\LL{\tilde{\L}}
\global\long\def\C{\mathbf{C}}
\global\long\def\g{\gamma}
\global\long\def\image{\mathrm{image}}
\global\long\def\G{\Gamma}
\global\long\def\Jac{\mathrm{Jac}}

\tableofcontents{}

\section{Introduction}

For integer parameters $n\geq3,$ $a\geq1,$ and 
$k\in \Z$
consider
the Diophantine equation

\begin{equation}
x_{1}^{2}+x_{2}^{2}+\ldots+x_{n}^{2}=ax_{1}x_{2}\ldots x_{n}+k.\label{eq:markoff-hurwitzeq}
\end{equation}
We call this the generalized\footnote{Normally $k=0$ is considered.}
\emph{Markoff-Hurwitz equation. }In this paper we count solutions
to (\ref{eq:markoff-hurwitzeq}) in 
integers, which we we
call \emph{Markoff-Hurwitz tuples}. More precisely, let $V$ be the
affine subvariety of $\C^{n}$ cut out by $\eqref{eq:markoff-hurwitzeq}$.
We are interested in the asymptotic size
of the set
\[
V(\Z)\cap B(R)
\]
 where $B(R)$ is the ball of radius $R$ in the $\ell^{\infty}$
norm on $\R^{n}\subset\C^{n}.$

When $n=3,a=3$ and $k=0$ solutions to (\ref{eq:markoff-hurwitzeq}) 
 in positive integers 
are called Markoff triples, and the numbers that appear therein are
called Markoff numbers\footnote{A long standing conjecture of Frobenius asserts that each Markoff
number appears as the maximal entry of only one triple, up to reordering.
If one assumes this conjecture, then the problems of counting Markoff
triples and numbers are the same.}. The Markoff numbers are intimately connected with Diophantine properties
of the rationals via the Markoff spectrum \cite{BOMBIERI}, and also
with hyperbolic geometry and free groups \cite{AIGNER}.

The question of counting $|V(\Z)\cap B(R)|$ for Markoff triples
was first investigated in the thesis of Gurwood 
\cite{GURWOOD} who established an asymptotic formula using the correspondence
between Markoff and Farey trees. An improved error
term was obtained by Zagier in \cite[pg. 711]{ZAGIER}, and a very
clean proof of a slightly weaker result can be found in Belyi \cite{BELYI}.
The current best result is due to McShane and Rivin \cite{RIVINMCSHANE}:
\begin{thm}[McShane-Rivin]
\label{thm:zagier}The number $M(R)$  of Markoff triples $(x,y,z)$ with
$x\leq y\leq z\leq R$ is given by
\[
M(R)=C(\log R)^{2}+O(\log R\log\log R)
\]
as $R\to\infty$, with $C>0.$ 
\end{thm}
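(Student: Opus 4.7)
The plan is to exploit the tree structure on Markoff triples provided by Vieta jumping. By the classical theorem of Markoff, every positive triple is obtained from the fundamental solution $(1,1,1)$ by iterated involutions $(x,y,z)\mapsto(3yz-x,y,z)$ together with permutations; consequently the ordered triples with $x\le y\le z$ organize into a rooted binary tree $T$ with root $(1,1,1)$, in which each non-root vertex has two children corresponding to the two Vieta jumps that strictly increase the triple.

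The first step is to analyze the growth of $\log z$ along paths in $T$. If $(x,y,z)$ is a vertex with $x\le y\le z$, each of its two children has largest entry of size $\approx 3yz$ or $\approx 3xz$, so the vector of logarithms of the two largest coordinates evolves, to leading order, under one of two Fibonacci-like $2\times 2$ matrices. This produces a linear cocycle over the shift on $\{L,R\}^{\N}$ whose Lyapunov behaviour controls $\log z$ along every infinite path. The slowest growth occurs along the ``Fibonacci'' ray in which $L$ and $R$ alternate: there $\log z$ grows only linearly in the depth, while along a generic path $\log z$ grows doubly exponentially.

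Next I would translate the counting problem to a problem on the boundary of the tree. The vertices of $T$ are naturally indexed by rationals $p/q\in[0,1]\cap\mathbb{Q}$ via the Stern--Brocot parameterization; writing $m_{p/q}$ for the largest coordinate of the associated triple, one seeks an asymptotic of the shape
\[
\log m_{p/q}=\mu(p/q)\,(p+q)+O(\log(p+q)),
\]
where $\mu\colon[0,1]\to\R_{>0}$ is a sufficiently regular function arising as the normalized Lyapunov exponent of the cocycle at $p/q$. The final step is then a lattice point count: the condition $m_{p/q}\le R$ becomes $\mu(p/q)(p+q)\le\log R+O(\log\log R)$, and counting coprime pairs $(p,q)$ in the resulting mildly distorted triangle produces the main term $C(\log R)^{2}$, with $C$ an explicit integral of $\mu^{-2}$ against the natural density on Farey fractions.

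The main obstacle is the intermediate step: one needs control on the cocycle and on the remainder in the formula for $\log m_{p/q}$ that is uniform in $p/q$ and robust enough that summing the errors does not swamp the main term. The error $O(\log R\log\log R)$, rather than a cleaner $O(\log R)$, reflects precisely the $\log\log R$-sized fluctuations of the cocycle about its Lyapunov direction, amplified across the $\log R$ many Farey levels contributing to the count.
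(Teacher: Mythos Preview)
The paper does not prove this theorem. Theorem~\ref{thm:zagier} is quoted in the introduction as a known result, attributed to McShane--Rivin and preceded historically by Gurwood, Zagier, and Bely\u{\i}; no proof or sketch is given in the paper itself. So there is no ``paper's own proof'' to compare against. What the paper does do, in Examples 4 and 5 and the surrounding discussion, is explain how its general linear-semigroup machinery specializes when $n=3$ to the Euclidean/continued-fraction dynamics, which is morally the same circle of ideas as Zagier's argument; but this is presented as motivation, not as an independent proof of the $n=3$ count.

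Your sketch is broadly in the spirit of the Gurwood--Zagier approach (Farey/Stern--Brocot parametrization of the Markoff tree followed by a lattice-point count), rather than the McShane--Rivin approach, which proceeds via a stable norm on $H_1$ of the punctured torus. Both routes are known to work. That said, your proposal is an outline, not a proof: the heart of the matter is exactly the step you flag as the ``main obstacle,'' namely a uniform asymptotic for $\log m_{p/q}$ in terms of Farey data with a controlled remainder. Your proposed shape $\log m_{p/q}=\mu(p/q)(p+q)+O(\log(p+q))$, with $\mu$ a ``normalized Lyapunov exponent,'' is not quite how the literature packages this; in Zagier's treatment the relevant quantity is a specific function on Farey fractions (closely tied to the fixed-point data of the associated hyperbolic element) rather than a Lyapunov exponent times $p+q$, and getting the error down to $O(\log R\log\log R)$ requires genuine work beyond the heuristic you give. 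As written, the proposal identifies the right skeleton but does not supply the analytic estimates that carry the argument.
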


Perhaps somewhat surprisingly, the asymptotic growth for $n\geq 4$ is not of the order $(\log R)^{n-1}$, as was first noticed by Baragar \cite{BARAGAR2}, who obtained the following result

\begin{thm}[Baragar]
\label{thm:Baragar-theorem}There is a number $\beta=\beta(n)$ such
that when $k=0,$ if 
$V(\Z)-\{(0,0,0)\}$
 is nonempty then for every $\varepsilon >0$ 
\begin{equation}
\Omega((\log R)^{\beta(n) - \varepsilon} )\leq |V(\Z)\cap B(R)| \leq O((\log R)^{\beta(n) + \varepsilon}) .\label{eq:barager-expoential-growth}
\end{equation}
\end{thm}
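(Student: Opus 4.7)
The plan is to exploit the Vieta involutions acting on $V$. For each $i$, viewing (\ref{eq:markoff-hurwitzeq}) as a quadratic in $x_i$ gives an involution $\sigma_i : V \to V$ that fixes the other coordinates and maps $x_i \mapsto a\prod_{j\neq i} x_j - x_i$. These are defined over $\Z$, so together with coordinate permutations they act on $V(\Z)$, and the orbits of the (finitely many) fundamental solutions exhaust $V(\Z)$. My first step would be to set up a tree/reduced-word structure: for a non-fundamental solution, the $i$ with $|x_i|$ maximal is unique (generically), and applying $\sigma_i$ strictly decreases $\max_j |x_j|$, so each non-root solution has a unique parent. The descendant count at depth $\ell\geq 1$ is therefore $n(n-1)^{\ell-1}$, because after applying some $\sigma_i$ one cannot apply $\sigma_i$ again without backtracking.

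Next, I would analyze the dynamics in log-coordinates $\mathbf{y} = (\log|x_1|,\ldots,\log|x_n|)$. Applied at a node where $x_i$ is the minimal coordinate, $\sigma_i$ acts by
\[
y_i \longmapsto \log\!\bigl|\,a\prod_{j\neq i} x_j - x_i\bigr| = \sum_{j\neq i} y_j + \log a + O\bigl(e^{-c\min_j y_j}\bigr),
\]
so deep in the tree the exact dynamics is exponentially close to the reflection $y_i \mapsto \sum_{j\neq i} y_j$. Projectivizing to the simplex $\mathbf{y}/\sum_k y_k$ produces a piecewise-smooth map on $\Delta^{n-1}$; for the size-ordered coordinate, the sum $S_\ell = \sum_i y_i^{(\ell)}$ is multiplied at each step by a factor $\lambda(w_\ell)$ that depends only on the projective class and on the branch $w_\ell \in \{1,\ldots,n-1\}$ chosen.

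From here one reads off $\beta$. Since $\log R \asymp \max_i y_i^{(\ell)} \asymp S_\ell$ up to bounded multiplicative constants, one has
\[
|V(\Z)\cap B(R)| \;\asymp\; \#\bigl\{\text{reduced words }w\text{ of length }\ell : S_\ell(w) \leq \log R + O(1)\bigr\}.
\]
A standard subadditive/cocycle argument (or Ruelle-type transfer operator applied to the expanding map on the projective simplex) shows that this count grows like $e^{(\beta + o(1))T}$ in $T = \log R$, where $\beta$ is the unique exponent for which the Poincar\'e series $\sum_{w} e^{-\beta S_{|w|}(w)}$ sits at criticality. Averaging over branches gives the Bowen-type formula $\beta = \log(n-1)/\bar{\Lambda}$, where $\bar{\Lambda}$ is the mean logarithmic growth factor per step; in particular $\beta \in (0, n-1)$ is noninteger in general. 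This yields both bounds in (\ref{eq:barager-expoential-growth}), the $\varepsilon$ absorbing the cocycle fluctuation together with the uniformly small additive $\log a$ errors.

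The main obstacle I expect is the quantitative control of orbits on the projective simplex: one needs to show that the iterated ratios $y_i^{(\ell)}/S_\ell$ stay in a fixed compact subset of the open simplex, bounded away from the walls $y_i = 0$, independently of the depth $\ell$ and of the branch. Only with such a non-degeneracy statement is the linearization uniformly valid and the growth factor $\lambda$ bounded away from $1$ and $\infty$. Establishing this compact invariant region, together with the combinatorics of which $\sigma_i$-branch is admissible from each projective class, is what the body of the paper would need to treat carefully; the finer question of an honest asymptotic (not just $(\log R)^{\beta\pm\varepsilon}$) is what motivates the conformal measure framework advertised in the abstract.
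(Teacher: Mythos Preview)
This theorem is not proved in the present paper: it is quoted as Baragar's result from \cite{BARAGAR2,BARAGAR3} and used as a black box (notably in Section~\ref{sub:spectral-theory-transfer-operator} to pin down the value of $\beta$). So there is no ``paper's own proof'' to compare your proposal against. That said, your outline is in the same circle of ideas as Baragar's original argument and as the machinery the paper develops to prove the stronger Theorem~\ref{thm:main-counting}: Vieta moves, tree structure via infinite descent, passage to log-coordinates, and linearization to the semigroup $\Gamma$ generated by the maps $\gamma_j$ of (\ref{eq:gamma-generators-defn}).

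Where your sketch has a genuine gap is precisely the point you flag as the ``main obstacle.'' You need the projective iterates $y^{(\ell)}/S_\ell$ to remain in a compact set bounded away from the walls, uniformly in the branch, so that the growth factor per step is bounded between constants strictly above $1$. For the unaccelerated system this is \emph{false}: the generator $\gamma_{n-1}$ fixes the vertex $(0,\ldots,0,1)$ of $\Delta$ and is only parabolic there, so long runs of $\gamma_{n-1}$ push the projective point arbitrarily close to that wall and the per-step growth factor tends to $1$. Your ``standard subadditive/cocycle argument'' therefore does not go through as stated, because the cocycle is not uniformly bounded away from zero. Baragar handles this by more delicate combinatorial estimates; the present paper handles it by the acceleration of Section~\ref{sub:acceleration}, replacing the generators $\gamma_j$ by the infinite family $\gamma_{n-1}^A\gamma_j$ with $j\leq n-2$, which \emph{is} uniformly contracting on $\Delta$ (Proposition~\ref{prop:uniformly-contracting}). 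Your Bowen-type formula $\beta=\log(n-1)/\bar\Lambda$ is also not quite right: $\beta$ is characterized (Theorem~\ref{thm:beta-characterization}, Proposition~\ref{prop:The-eigenfunction-decreases}) as the unique $s$ at which the leading eigenvalue $\lambda_s$ of the transfer operator equals $1$, and there is no reason for this to reduce to a simple ratio of $\log(n-1)$ by an average Lyapunov exponent.
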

This was strengthened by Baragar in \cite{BARAGAR3} under the same hypotheses to 
\begin{equation}
|V(\Z)\cap B(R)|=(\log R)^{\beta+o(1)}.\label{eq:barager-expoential-growth}
\end{equation}
In \cite{BARAGAR3} the following bounds for the exponents $\beta(n)$ were also obtained
\begin{eqnarray*}
\beta(3) & = & 2,\\
\beta(4) & \in & (2.430,2.477),\\
\beta(5) & \in & (2.730,2.798),\\
\beta(6) & \in & (2.963,3.048),
\end{eqnarray*}
 and in general
\[
\frac{\log(n-1)}{\log2}<\beta(n)<\frac{\log(n-1)}{\log2}+o(n^{-0.58}).
\]

 In 1995 \cite{Silverman}, it was asked by Silverman  whether in the setting of $k=0$
  \begin{enumerate}
 \item there is a true asymptotic formula for $|V(\Z)\cap B(R)|$ with main term proportional to $\log(R)^\beta$, and
 \item furthermore, $\beta(n)$ is irrational?
 \end{enumerate}
The irrationality of $\beta$ remains a tantalizing open question and one may wonder whether it is even algebraic. On the other hand, our methods do give some further insight into the nature of this mysterious number (cf. Theorem \ref{thm:beta-characterization} below). 
The main goal of this paper is to extend Baragar's exponential rate of growth estimate to a true asymptotic formula\footnote{The techniques in \cite{BARAGAR3} ``were inspired in part by Boyd's work on the Apollonian packing problem \cite{Bo1, Bo2, Bo3}.''  Boyd's result was extended to a true asymptotic formula in the work of Kontorovich and Oh \cite{KO}.}.

When $k>0$ there are certain exceptional families of solutions to
(\ref{eq:markoff-hurwitzeq}) that have a different quality of growth.
We describe these families in Definition \ref{def:exceptional} and
for fixed $k,a,n$ we write $\E$ for the set of exceptional tuples. 
We obtain the following theorem for the asymptotic number of Markoff-Hurwitz
tuples.
\begin{thm}
\label{thm:main-counting}For each $(n,a,k)$ with $V(\Z)-\E$
infinite, there is a positive constant $c=c(n,a,k)$ such that
\[
|(V(\Z)-\E)\cap B(R)|=c(\log R)^{\beta}+o((\log R)^{\beta}).
\]
Here $\beta$ is the same constant as Theorem \ref{thm:Baragar-theorem}.
\end{thm}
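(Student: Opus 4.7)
The strategy is to exploit the group $\G$ acting on $V$ generated by the Vieta involutions $\sigma_i\colon x_i\mapsto a\prod_{j\neq i}x_j-x_i$ (obtained by viewing \eqref{eq:markoff-hurwitzeq} as a quadratic in $x_i$), augmented by sign changes and coordinate permutations. After removing the exceptional set $\E$, every $\G$-orbit should have a unique root tuple of minimal $\ell^{\infty}$ norm, and the remaining orbit elements form a rooted tree in which each vertex has $n-1$ forward Vieta children (the inverse of the parent move is blocked). The proof then begins with this combinatorial reduction: $|(V(\Z)-\E)\cap B(R)|$ equals the sum over finitely many orbit roots of the number of tree vertices whose associated tuple has $\ell^\infty$ norm at most $R$.

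To measure growth along the tree, I would pass to logarithmic projective coordinates by sending $\xtuple$ to $[\log|x_1|:\cdots:\log|x_n|]$ in a simplex $\Sigma$. On the interior of $\Sigma$, $\sigma_i$ acts projectively as the expanding linear map $L_i\colon(y_1,\ldots,y_n)\mapsto(y_1,\ldots,\sum_{j\neq i}y_j,\ldots,y_n)$ up to an exponentially small correction, and the admissible forward moves become inverse branches of these $L_i$, realising an iterated function system with compact limit set $\Lambda\subset\Sigma$. The logarithm of $\|x\|_\infty$ at a tree vertex then equals the additive cocycle along the corresponding word up to a uniformly bounded error. Introducing the Ruelle transfer operator $\L_s$ acting on H\"older (or Lipschitz) functions on $\Lambda$, one defines $\beta$ as the unique $s$ for which $\L_s$ has spectral radius $1$; its equality with Baragar's exponent follows by matching both to the same exponential rate. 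The leading eigendata of $\L_\beta$ produce a conformal probability measure $\nu_\beta$ on $\Lambda$, the object announced in the abstract and characterised in Theorem \ref{thm:beta-characterization}, and a renewal-type argument then converts a spectral gap for $\L_\beta$ into the asymptotic $c(\log R)^\beta+o((\log R)^\beta)$.

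The main obstacle is the non-uniform hyperbolicity of the IFS at the boundary of $\Sigma$, where a branch of $L_i$ degenerates and the linearisation $x_i'\approx a\prod_{j\neq i}x_j$ breaks down. One has to decompose the tree into a core part on which $\L_s$ admits a genuine spectral gap and boundary excursions, and show that these excursions account exactly for the exceptional families $\E$, so that their removal from the count is the natural one. Equally important is a quantitative control of the linearisation error uniformly along every word of the IFS, which is what allows one to upgrade Baragar's $(\log R)^{\beta+o(1)}$ to the true asymptotic with main-term constant $c>0$. It is this uniform quantitative analysis, rather than the soft thermodynamic framework, that I expect to constitute the technical heart of the proof.
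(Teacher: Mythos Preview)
Your broad outline is right and matches the paper: reduce to finitely many orbit trees via infinite descent, linearise by taking logs so that the Vieta moves become the linear maps $\gamma_j(y)=(y_1,\ldots,\widehat{y_j},\ldots,y_n,\sum_{i\neq j}y_i)$, set up a transfer operator $\L_s$ on the projective simplex, and run Lalley's renewal argument once you have a spectral gap. You also correctly locate the obstruction as the failure of uniform contraction at the boundary.

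Where your proposal goes wrong is in the proposed cure. You suggest decomposing the tree into a core piece plus ``boundary excursions'' and then identifying those excursions with the exceptional set $\E$. Neither part is correct. The exceptional solutions are an arithmetic phenomenon (they occur only when $a\in\{1,2\}$ and force $a\prod_{i\le n-2}x_i=2$), they form finitely many $\Aut(V)$-orbits with $|\E\cap B(R)|\asymp R$, and they are simply discarded at the infinite-descent stage before any dynamics is introduced; they have nothing to do with the parabolic behaviour of the linear IFS. The non-uniform hyperbolicity comes instead from the single generator $\gamma_{n-1}$, which has a neutral fixed point on the boundary of the simplex (in the $n=3$ case this is the Farey map versus the Gauss map). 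The paper's key device, which your proposal is missing, is \emph{acceleration}: one replaces the finite generating set $\{\gamma_1,\ldots,\gamma_{n-1}\}$ by the countably infinite set $T_\Gamma=\{\gamma_{n-1}^A\gamma_j:A\ge 0,\ 1\le j\le n-2\}$ and works with the subsemigroup $\Gamma'$ these generate. With these generators the action on $\Delta=\H/\R_+$ is genuinely uniformly contracting (this is proved by an explicit, lengthy computation of $\ell^1$ operator norms of total derivatives), so the Ruelle--Perron--Frobenius theorem and the spectral gap go through cleanly on $C^1(\Delta)$.

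Acceleration is also what makes the linearisation error controllable, and here your claim of a ``uniformly bounded error'' is too optimistic. The discrepancy between $\log z$ and its linear image is governed by $\alpha(z)=\prod_{j\le n-2}z_j$, and $\alpha$ is \emph{unchanged} by $\lambda_{n-1}$, so along long words in the original generators the error need not shrink. After acceleration, however, $\alpha$ grows doubly exponentially in the accelerated word length, and the paper first iterates the renewal equation $L\approx c\,a/\log a$ times to push $\alpha$ large before comparing to the linear count. Without this mechanism you cannot upgrade Baragar's $(\log R)^{\beta+o(1)}$ to a true asymptotic.
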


\begin{rem}
We explain in Section \ref{sub:basic-properties} that removing $\E$
is necessary in Theorem \ref{thm:main-counting} since the exceptional
families have
$|\E\cap B(R)|\geq cR$, $c>0$ for  $R\geq R_0(n,a,k)$
 when they are non-empty.
On the other hand, $\E$ is non-empty only when $k-n+2$ or $k-n-1$
is a square.
\end{rem}

\begin{rem} The issue of the existence and infinitude of integral solutions for general $a, k$, even for $n=3$,  is quite subtle: see \cite{MORDELL, SM}. In recent work of Ghosh and Sarnak \cite{GSMARKOFF}, the Hasse principle is  established to hold for Markoff-type cubic surfaces $x_1^2+x_2^2+x_3^2-x_1 x_2 x_3 = k$  for almost all $k$.
\end{rem}

\begin{rem}
In Theorem \ref{thm:beta-characterization} we give a new characterization
of $\beta$ as the unique parameter for which there exists a conformal
measure for the action of a linear semigroup on projective space.
\end{rem}

Our counting arguments, as in \cite{ZAGIER} and \cite{BARAGAR2,BARAGAR3},
depend on an infinite descent for solutions to (\ref{eq:markoff-hurwitzeq})
that goes back to Markoff \cite{MARKOFF} in the case of Markoff triples
and Hurwitz \cite{HURWITZ} in the higher dimensional setting of $n>3$,
$k=0.$
In Section \ref{sub:basic-properties} we explain how the counting problem for $V(\Z)$
can be related to the analogous one for $V(\Z_+)$, where $\Z_+$ are the positive integers.

Given $x\in V(\Z_{+}),$ fixing all of the coordinates of
$x$ except $x_{j}$ and viewing (\ref{eq:markoff-hurwitzeq}) as
a quadratic polynomial in $x_{j}$, the other root is given by 
\[
x'_{j}=a\prod_{i\neq j}x_{i}-x_{j}.
\]
Therefore for each $j$ one has the \emph{Markoff-Hurwitz move}
\[
m_{j}(x_{1},x_{2},\ldots,x_{n})=(x_{1},x_{2},\ldots,\underbrace{a\prod_{i\neq j}x_{i}-x_{j}}_{j},\ldots,x_{n})
\]
that yields a new solution to (\ref{eq:markoff-hurwitzeq}). Infinite
descent for the Markoff-Hurwitz equation says that any unexceptional
tuple 
in $V(\Z_+)$
can be reduced to one in a compact set $K_{0}=K_{0}(n,a,k)$
by a series of Markoff-Hurwitz moves (cf. Corollary \ref{cor:infinite-descent}).

After renormalizing (\ref{eq:markoff-hurwitzeq}), the Markoff-Hurwitz
moves $\{m_{j}\}$ induce the moves

\[
\lambda_{j}(z_{1},\ldots,z_{n})=\left(z_{1},\ldots,\widehat{z_{j}},\ldots,z_{n},\prod_{i\neq j}z_{i}-z_{j}\right),\quad1\leq j\leq n-1,
\]
on ordered tuples, where $\widehat{\bullet}$ denotes omission. If
enough of  the $z_{i}$ are large, the move $\lambda_{j}$
can be approximated by
\[
z\mapsto\left(z_{1},\ldots,\widehat{z_{j}},\ldots,z_{n},\prod_{i\neq j}z_{i}\right)
\]
to high accuracy relative to the largest entries of $z$. At the level
of logarithms this corresponds to 
\[
(\log z_{1},\log z_{2},\ldots,\log z_{n})\mapsto\left(\log z_{1},\ldots,\widehat{\log z_{j}},\ldots,\log z_{n},\sum_{i\neq j}\log z_{i}\right).
\]
Thus one is naturally led to study the linear semigroup generated
by linear maps
\begin{equation}
\g_{j}(y_{1},y_{2},\ldots,y_{n})=\left(y_{1},\ldots,\widehat{y_{j}},\ldots,y_{n},\sum_{i\neq j}y_{i}\right)\label{eq:gamma-generators-defn}
\end{equation}
on ordered $n$-tuples $(y_1,\ldots,y_n)$. Indeed, this is the approach of Zagier \cite{ZAGIER} in the setting
of Markoff triples and Baragar \cite{BARAGAR2} for general $n,a$
with $k=0$. Let 
\[
\Gamma=\langle\g_{1},\ldots,\g_{n-1}\rangle_{+}
\]
where we have written a `$+$' to indicate we are generating a semigroup,
not a group.

An important idea in this work that explains why we are able to
make progress on the counting problem is that we replace\footnote{See our discussion in Section \ref{sub:acceleration} about the benefits
of this replacement. It is inspired by the `Time Acceleration Machine' described by Zorich in \cite[Section 5.3] {zorich}.} the generators of $\Gamma$ with the countably infinite generating
set
\[
T_{\Gamma}=\left\{ \:\gamma_{n-1}^{A}\gamma_{j}\::\:A\in\Z_{\geq0},\:1\leq j\leq n-2\:\right\} 
\]
and then consider the semigroup 
\[
\Gamma'=\langle\:T_{\Gamma}\:\rangle_{+}.
\]

Both $\Gamma$ and $\Gamma'$ are freely generated by their respective
generating sets\footnote{This follows from a similar argument to the proof of Lemma \ref{lem:Lambda-is-free}
we give below.}. Notice that $\Gamma$ and $\Gamma'$ preserve the nonnegative ordered
hyperplane

\begin{equation}
\H\equiv\left\{ \:(y_{1},\ldots,y_{n})\in\R_{\geq0}^{n}\::\:y_{1}\leq y_{2}\leq\ldots\leq y_{n,\:}\sum_{j=1}^{n-1}y_{j}=y_{n}\:\right\} \subset\R_{\geq0}^{n}\label{eq:hyperplane-defn}
\end{equation}
and that any element of $\Gamma$ maps ordered tuples in $\R_{\geq0}^{n}$
into $\H$. Therefore the study of orbits of $\Gamma$ and $\Gamma'$
on ordered tuples boils down to the study of orbits in $\H$. 
\begin{example}
When $n=3,$ the linear map $\sigma:\H\to\H$ defined by
\begin{equation}
\sigma(a,b,a+b)=\order\left(b-a,a,b\right),\label{eq:sigma_0-formual-1}
\end{equation}
where $\order$ puts a tuple in ascending order from left to right,
is such that for $j=1,2$ we have
\[
\sigma\g_{j}.y=y
\]
for all $y\in\H.$ Repeatedly applying the map $\sigma$ to a triple
$(a,b,a+b)$ with $a\leq b\in\Z$ performs the Euclidean algorithm
on $a,b.$ However, one application of $\sigma$ corresponds in general
to less than one step of the algorithm. Replacing $\Gamma$ with $\Gamma'$
corresponds to speeding this up so one whole step of the Euclidean
algorithm corresponds to one semigroup generator. As for counting,
the orbit of $(0,1,1)$ under $\Gamma$ is precisely those $(a,b,a+b)$
with $(a,b)=1$ and thus can be counted by elementary methods. This
is exploited in Zagier's paper \cite{ZAGIER}. 
\end{example}
We can use the basis 
\[
e_{j}=(0,\ldots,0,\underbrace{1}_{j},0,\ldots,0,1)
\]
for the subspace spanned by $\H$. This basis clarifies the action
of $\Gamma'$. 
\begin{example}
When $n=3$ the semigroup $\Gamma'$ is generated by the
\[
g_{A}:=\gamma_{2}^{A}\gamma_{1}=\left(\begin{array}{cc}
0 & 1\\
1 & A+1
\end{array}\right)
\]
with respect to the basis $\{e_{1},e_{2}\}.$ These generators are
classically connected with continued fractions by the formulae

\[
\left(\begin{array}{cc}
0 & 1\\
1 & A_{1}
\end{array}\right)\left(\begin{array}{cc}
0 & 1\\
1 & A_{2}
\end{array}\right)\ldots\left(\begin{array}{cc}
0 & 1\\
1 & A_{k}
\end{array}\right)=\left(\begin{array}{cc}
\star & b\\
\star & d
\end{array}\right),\quad\frac{b}{d}=\cfrac{1}{A_{1}+\cfrac{1}{A_{2}+\ddots\cfrac{1}{A_{k}}}}.
\]

\end{example}

\begin{example}\label{ex:Rauzy}

When $n=4$ the semigroup $\Gamma$ acts in the basis given by the
$e_{i}$ as
\[
\gamma_{1}=\left(\begin{array}{ccc}
0 & 1 & 0\\
0 & 0 & 1\\
1 & 1 & 1
\end{array}\right),\:\gamma_{2}=\left(\begin{array}{ccc}
1 & 0 & 0\\
0 & 0 & 1\\
1 & 1 & 1
\end{array}\right),\:\gamma_{3}=\left(\begin{array}{ccc}
1 & 0 & 0\\
0 & 1 & 0\\
1 & 1 & 1
\end{array}\right).
\]
This semigroup appears naturally in different areas of mathematics.
In most situations that this semigroup appears, as will also be the
case in this paper, the dynamics of the projective linear action of
$\Gamma$ on $\R_{+}^{3}/\R_{+}$ becomes relevant. Up to the minor modification
of possibly multiplying the generators on
the left or right by permutation matrices, the iterated function system
given by the projective linear action of $\Gamma$ on $\R_{+}^{3}/\R_{+}$
has a fractal attracting set that is known as the \emph{Rauzy gasket. }

The Rauzy gasket first appears in the literature in a paper of Levitt
\cite{LEVITT} in connection with the dynamics of partially defined
rotations of the circle. The Rauzy gasket has been rediscovered by
different groups of mathematicians, including De Leo and Dynnikov
\cite{DD} in connection to a conjecture of Novikov \cite{NOV} on
triply periodic surfaces, Arnoux and Starosta \cite{AS} (wherein
the Rauzy gasket was given its name) in relation to generalizations
of Sturmian words to three letters and the `fully subtractive' continued
fractions algorithm, and now, in this paper, in connection to Diophantine
geometry.

The Rauzy gasket was proven by Avila, Hubert, and Skripchenko \cite{AHS1}
to have Hausdorff dimension less than 2, answering a question of Arnoux.
The acceleration, replacing $\Gamma$ by $\Gamma'$, that we perform
here is also carried out (in the context of iterated function systems)
by Arnoux and Starosta \cite{AS} and Avila, Hubert, and Skripchenko
\cite{AHS1}, where the acceleration is viewed as analogous to Zorich's acceleration (see  \cite[Section 5.3] {zorich})
 of Rauzy-Veech induction that is well known in Teichm\"{u}ller
dynamics. 

It is also worth pointing out that higher dimensional versions of
the Rauzy gasket have been defined \cite{AS,DELEO2}, and the branches
of the corresponding iterated function system, after the same simple
modifications as before, match with our $\Gamma$ for $n>4$.

Some of our technical results in Sections \ref{sec:linear-semigroup-count} and \ref{sec:dynamics} can be
closely compared to, intersect with, or generalize, results obtained
by Avila, Hubert, and Skripchenko for the Rauzy gasket in \cite{AHS2,AHS1}.
We point out these intersections throughout the paper. 

\end{example}

So our semigroups $\Gamma$ and $\Gamma'$ are natural extensions
of the Euclidean algorithm and continued fractions semigroup to higher
dimensions\footnote{See  \cite{zorich} for the discussion of such an extension in the context of translation surfaces.}.  We write $\Delta=\H/\R_{+}$ and we can view $\Delta$
as a subset of $\R^{n-2}$ (see Section \ref{sec:dynamics} for details).
The key distinction that appears when $n\geq4$ is that
\[
\Delta\neq\bigcup_{j=1}^{n-1}\gamma_{j}(\Delta)
\]
 and so the induced dynamics on $\H/\R_{+}$ has `holes' as we illustrate
in Figure \ref{fig:Color-image}.

\begin{figure}

\includegraphics[scale=0.4]{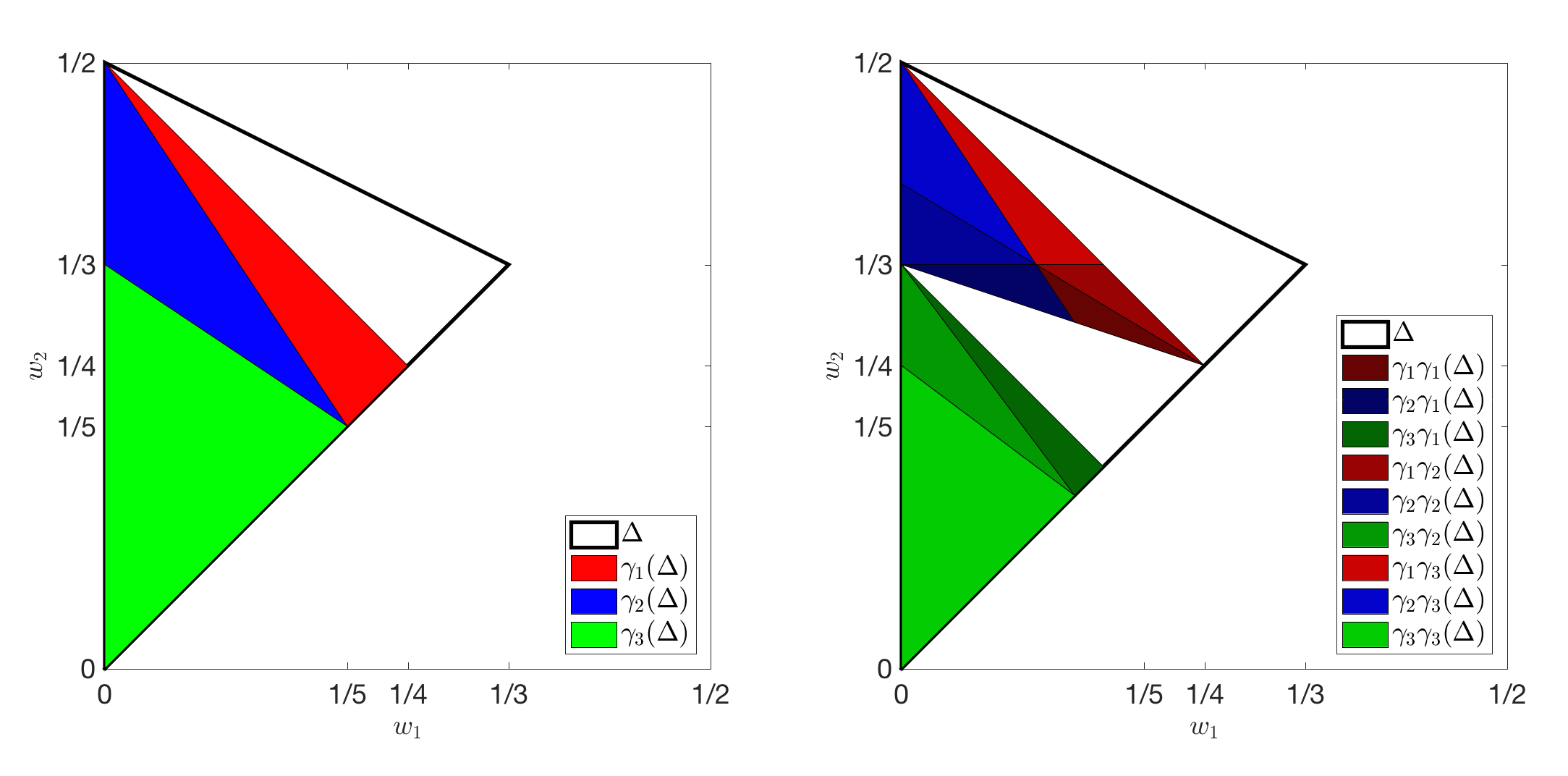}\protect\caption{\label{fig:Color-image}When $n=4,$ the semigroup elements map $\Delta=\protect\H/\protect\R_{+}$
into a strictly smaller subset. After iteration this leads to more
and more empty space (see also Figure \ref{fig:fractal}). This doesn't
occur when $n=3,$ as one can also see from the picture: the action
of the group elements $\gamma_{2}$ and $\gamma_{3}$ on the vertical
coordinate axis is a copy of the $n=3$ dynamics.}
\label{color-picture}

\end{figure}

We get a new characterization of the parameter $\beta$ in terms of
the action of $\G'$ on $\H/\R_{+}.$
\begin{thm}
\label{thm:beta-characterization}The $\beta$ from Theorem \ref{thm:Baragar-theorem}
is the unique parameter in $(1,\infty)$ such that there exists a
probability measure $\nu_{\beta}$ on $\Delta=\H/\R_{+}$ with the
property

\[
\int_{w\in\Delta}f(w)\:d\nu_{\beta}(w)=\sum_{\gamma\in T_{\G}}\int_{w\in\Delta}f(\gamma.w)|\Jac_{w}(\gamma)|^{\frac{\beta}{n-1}}\:d\nu_{\beta}(w)
\]
for all $f\in C^{0}(\Delta).$ We call $\nu_{\beta}$ a conformal
measure. \end{thm}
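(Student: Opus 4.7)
The plan is to recognise the conformal identity as the fixed-point condition $\L_s^* \nu = \nu$ for the dual of a Ruelle transfer operator, and then to identify the unique $s$ for which such a fixed point exists with Baragar's exponent $\beta$.

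Step 1 (Transfer operator). For $s > 0$ I would define
\[
(\L_s f)(w) = \sum_{\gamma \in T_\G} f(\gamma.w)\,|\Jac_w(\gamma)|^{s/(n-1)}, \qquad f \in C^0(\Delta).
\]
The equation in the theorem is then equivalent to $\L_s^* \nu = \nu$. First I would verify convergence of this series on $\Delta$: the accelerated generating set $T_\G$ is introduced precisely so that the Jacobians $|\Jac(\gamma_{n-1}^A \gamma_j)|$ decay geometrically in $A$ and are otherwise separated by the freeness of $\Gamma'$. For the unaccelerated set this would fail, since $\gamma_{n-1}$ has Jacobian of norm $1$ in appropriate coordinates on $\H$.

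Step 2 (Existence at $s = \beta$). Construct $\nu_\beta$ by the classical Patterson--Sullivan recipe. Fix a basepoint $w_0 \in \Delta$ and consider, for $s$ slightly above criticality,
\[
\nu_s = \frac{1}{P(s)}\sum_{\gamma \in \Gamma'} |\Jac_{w_0}(\gamma)|^{s/(n-1)}\,\delta_{\gamma.w_0},
\]
where $P(s)$ is the normalising Poincar\'e series. Any weak-$*$ limit of $\nu_s$ as $s$ descends to the abscissa of convergence of $P(s)$ is a probability measure on $\Delta$, and provided $P(s)$ diverges at criticality (enforced by a standard slow-growth perturbation of the exponent if necessary) the telescoping identity $|\Jac_w(\gamma\gamma')| = |\Jac_{\gamma'.w}(\gamma)|\,|\Jac_w(\gamma')|$ yields the conformality relation of the theorem. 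Alternatively, one can apply Schauder--Tychonoff to $\mu \mapsto \L_s^*\mu/(\L_s^*\mu)(\Delta)$ on the convex set of probability measures, at the parameter where this map preserves total mass.

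Step 3 (Identifying $s = \beta$ and uniqueness). The factor $|\Jac_{w_0}(\gamma)|^{1/(n-1)}$ is, up to bounded multiplicative error, inversely proportional to the largest coordinate of $\gamma.w_0 \in \H$; interpreting the $\Gamma'$-orbit of $w_0$ as the outputs of the Markoff--Hurwitz descent identifies the abscissa of convergence of $P(s)$ with Baragar's $\beta$ via (\ref{eq:barager-expoential-growth}). For uniqueness, suppose $\nu$ is a conformal measure at parameter $s$; iterating the defining identity gives $1 = \int \L_s^k 1\,d\nu$ for every $k \geq 1$, where $\L_s^k 1(w) = \sum_{|\gamma|=k}|\Jac_w(\gamma)|^{s/(n-1)}$ with the sum over length-$k$ words in $T_\G$. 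These sums decay exponentially in $k$ when $s > \beta$ and grow exponentially when $s < \beta$, which is incompatible with a constant integral against a probability measure on a compact space. Hence $s = \beta$.

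The main obstacle lies in Step 1: controlling the transfer operator on a system with ``holes'' (see Figure \ref{fig:Color-image}). Unlike the $n = 3$ case where $\Delta$ is exactly tiled by $\gamma_j(\Delta)$, for $n \geq 4$ the images leave gaps whose iterates build up a nontrivial fractal attractor, and $\L_s$ really acts naturally on this attractor rather than on $\Delta$ itself. Establishing convergence of the Poincar\'e series together with the spectral gap needed for uniqueness in a function space adapted to the attractor (e.g.\ H\"older functions) is the technical heart of the argument, and is precisely why the acceleration trick replacing $\G$ by $\Gamma'$ is introduced.
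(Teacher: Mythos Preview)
Your outline is essentially correct and reaches the theorem, but it takes a more direct Patterson--Sullivan route than the paper's proof. The paper obtains the conformal measure as a byproduct of the full Ruelle--Perron--Frobenius theory on $C^1(\Delta)$: it first establishes (via Schauder--Tychonoff, Corollary \ref{lem:conformalexistence}) an eigenmeasure $\L_s^*\nu_s = \lambda_s\nu_s$ for every real $s>1$, then proves $\lambda_s$ is strictly decreasing (Proposition \ref{prop:The-eigenfunction-decreases}), and identifies the unique $\beta_0$ with $\lambda_{\beta_0}=1$ with Baragar's $\beta$ only \emph{after} running Lalley's entire renewal argument and comparing the resulting linear count with Theorem \ref{thm:Baragar-theorem}. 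Your approach short-circuits this: you build $\nu_\beta$ directly as a weak-$*$ limit of Poincar\'e-type measures, identify the abscissa with $\beta$ via Baragar's own results on the linear semigroup (so you never need the nonlinear-to-linear comparison of Section \ref{sec:converting-linear-to-nonlinear}), and argue uniqueness from $\int \L_s^k\mathbf{1}\,d\nu = 1$. This is more economical if one only wants Theorem \ref{thm:beta-characterization}; the paper's heavier spectral machinery is there because it is needed anyway for the main counting theorem.

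Two technical points to tighten. First, in Step 1 the Jacobians $|\Jac_w(\gamma_{n-1}^A\gamma_j)|$ do \emph{not} decay geometrically in $A$: from \eqref{eq:new-nth-coefficient} one has $|\Jac_w(\gamma_{n-1}^A\gamma_j)|^{1/(n-1)} = (1+(A+1)(1-w_j))^{-1}$, which is only polynomial in $A$, so $\L_s$ converges for $s>1$ rather than $s>0$. Second, your uniqueness argument needs uniform control of $\L_s^k\mathbf{1}$ over $\Delta$ (or at least $\nu$-a.e.\ with a dominating function) to interchange limit and integral; this is where the uniform contraction of Proposition \ref{prop:uniformly-contracting} and the Ionescu Tulcea--Marinescu inequality (Lemma \ref{lem:Marinescu-Tulcea}) enter, so you have not entirely escaped the analytic core you flag as the main obstacle.
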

\begin{rem}
Theorem \ref{thm:beta-characterization} can be viewed as a partial
analog of the connection between the exponent of growth of a finitely
generated Fuchsian group and the Hausdorff dimension of its limit
set as a result of Patterson-Sullivan theory \cite{PATTERSON, SUL79, SUL84}. In
our setting, the lack of any symmetric space means the parameter $\beta$
is not in any obvious way connected to the Hausdorff dimension of
the compact $\Gamma'$-invariant subset of $\Delta.$
\end{rem}
\begin{rem}
 In the case of $n=4$, the measure $\nu_\beta$ is essentially the same as the measure obtained for the Rauzy gasket by Avila, Hubert, and Skripchenko in \cite[Theorem 1]{AHS2} in the context of a problem of Novikov \cite{NOV} on triply periodic surfaces. 

\end{rem}
\begin{figure}
\begin{centering}
\includegraphics[scale=0.45]{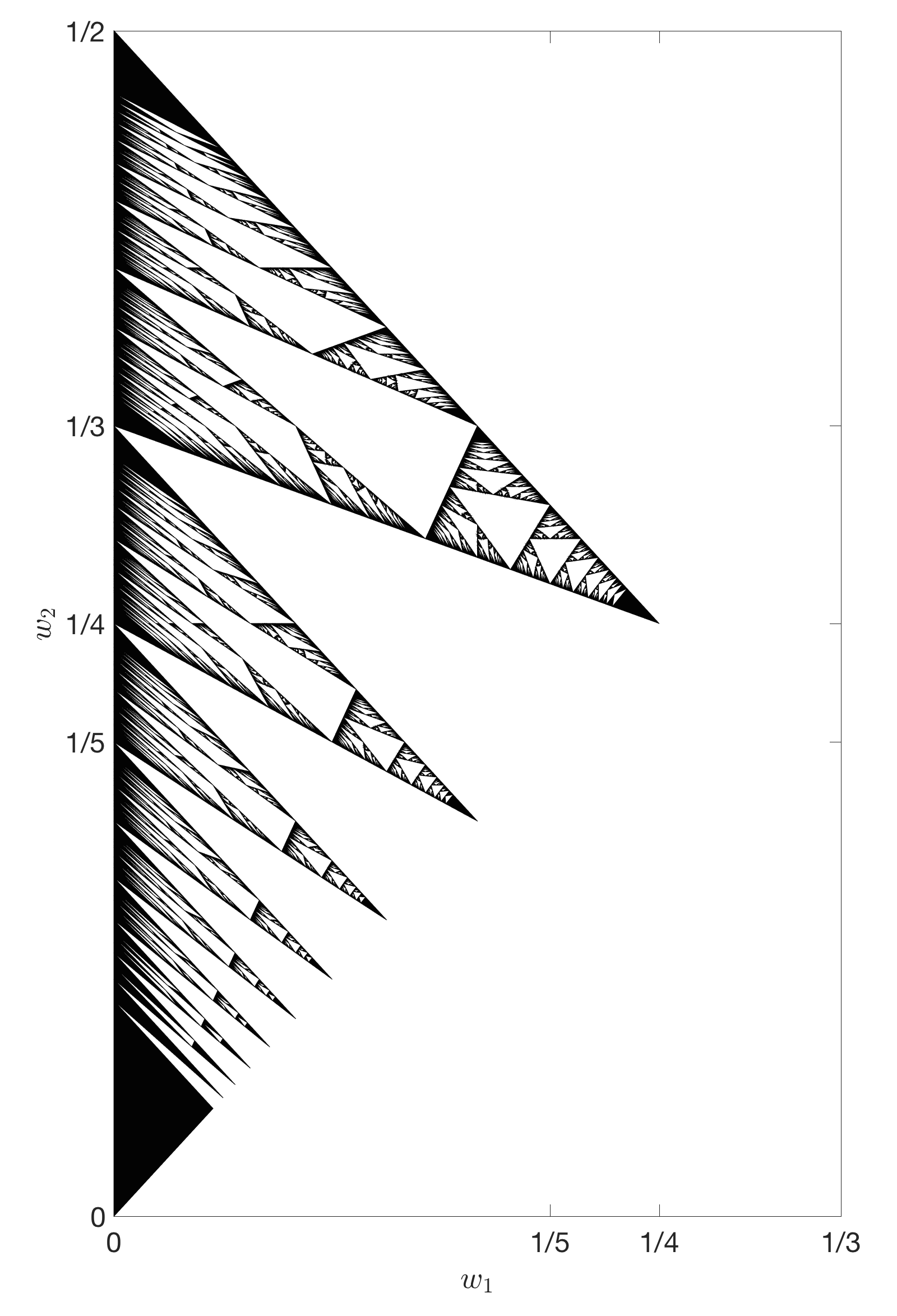}
\par\end{centering}

\centering{}\protect\caption{\label{fig:fractal}In the same setting ($n=4$) of Figure \ref{fig:Color-image},
we show in black the images of $\Delta$ under the action of all words
of length 10 in the generators $\{\gamma_{1},\gamma_{2},\gamma_{3}\}$.}
\end{figure}

In Section \ref{sec:comparison-section} we reduce Theorem \ref{thm:main-counting}
to a counting theorem for orbits of the semigroup $\Gamma'.$ The relevant counting quantity is defined by
\begin{equation}
N(y,a)\equiv\sum_{\g\in\Gamma'\cup\{e\}}\mathbf{1}\{\log(\gamma.y)_{n}-\log(y)_{n}\leq a\}\label{eq:N-of-y-a-definition}
\end{equation}
for $y\in\H-0$ and $a\geq0.$ We prove
\begin{thm}
\label{thm:-linear-counting}There is a positive bounded $C^{1}$
function $h$ on $\H$ that is invariant under the action of $\R_{+}$
and such that
\[
N(y,a)=h(y)e^{\beta a}(1+o_{a\to\infty}(1))
\]
for all $y\in\H-0,$ where the implied function in the small $o$
does not depend on $y.$ Moreover, $h$ satisfies the recursion

\begin{equation}
\sum_{\g\in T_{\Gamma}}\left(\frac{(\gamma.y)_{n}}{y_{n}}\right)^{-\beta}h(\gamma.y)=h(y).\label{eq:h-recursion}
\end{equation}
The constant $\beta$ is the same as in Theorem \ref{thm:Baragar-theorem}.
\end{thm}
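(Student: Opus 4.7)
The plan is to recognize (\ref{eq:N-of-y-a-definition}) as a renewal sum for the $\Gamma'$-dynamics on $\Delta=\H/\R_+$ and apply transfer-operator/Tauberian machinery. Since $\Gamma'$ is freely generated by $T_\Gamma$, every non-identity $\gamma\in\Gamma'$ factors uniquely as $\gamma=\eta\gamma_0$ with $\gamma_0\in T_\Gamma$ the first-applied generator and $\eta\in\Gamma'\cup\{e\}$. Writing $\tau(y,\gamma):=\log(\gamma.y)_n-\log(y)_n\geq 0$, the cocycle identity $\tau(y,\eta\gamma_0)=\tau(y,\gamma_0)+\tau(\gamma_0.y,\eta)$ yields the renewal equation
\[
N(y,a)=\mathbf{1}_{[0,\infty)}(a)+\sum_{\gamma_0\in T_\Gamma} N\bigl(\gamma_0.y,\; a-\tau(y,\gamma_0)\bigr).
\]
Both $\tau$ and $N(\cdot,a)$ are scale-invariant in $y$, so the entire problem descends to $\Delta$.

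The next step is to study the transfer operator
\[
\L_s f(w):=\sum_{\gamma\in T_\Gamma} e^{-s\tau(w,\gamma)} f(\gamma.w)
\]
on a Banach space $\mathcal{B}$ of $C^1$ (or H\"older) functions on $\Delta$. I would establish: (i) the series converges for $\mathrm{Re}(s)>\beta$; (ii) each accelerated generator $\gamma_{n-1}^A\gamma_j$ is a strict contraction on the forward-invariant subset of $\Delta$, with contraction rate improving in $A$ — this is the whole point of replacing $\Gamma$ by $\Gamma'$; (iii) a Lasota--Yorke inequality on $\mathcal{B}$ gives quasi-compactness of $\L_s$ and hence a simple isolated leading eigenvalue $\rho(s)$; (iv) $\rho$ is continuous and strictly decreasing in $s$, with $\rho(\beta)=1$ — this is Theorem \ref{thm:beta-characterization} read dually, the conformal measure $\nu_\beta$ being precisely the leading left eigenvector of $\L_\beta$. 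Perron--Frobenius then produces a positive right eigenfunction $h\in\mathcal{B}$ with $\L_\beta h=h$, which is exactly (\ref{eq:h-recursion}); positivity, boundedness, and $C^1$-regularity come from the choice of $\mathcal{B}$.

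Given this spectral picture, the asymptotic follows from a renewal theorem. Laplace-transforming, $\tilde N(w,s):=\int_0^\infty e^{-sa}N(w,a)\,da$ satisfies $(I-\L_s)\tilde N(\cdot,s)=1/s$, so for $\mathrm{Re}(s)>\beta$,
\[
\tilde N(w,s)=(I-\L_s)^{-1}\bigl(1/s\bigr)(w).
\]
By (iii)--(iv) this extends meromorphically across the line $\mathrm{Re}(s)=\beta$ with a single simple pole at $s=\beta$ whose residue is a positive scalar multiple of $h(w)$. A Wiener--Ikehara/Delange Tauberian theorem then delivers $N(w,a)=h(w)e^{\beta a}(1+o(1))$ with the $o(1)$ uniform in $w$, provided the displacement cocycle $\tau$ is non-arithmetic; this last property is checked by exhibiting two words in $\Gamma'$ whose translation lengths have irrational ratio.

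The principal obstacle is (ii)--(iii): producing a genuine spectral gap for $\L_\beta$ in spite of the infinite alphabet $T_\Gamma$ and the fractal holes in the $\Gamma'$-action on $\Delta$ visible in Figures \ref{fig:Color-image} and \ref{fig:fractal}. The infinite alphabet prevents $\L_\beta$ from being trivially compact, so one must combine a sharp Lasota--Yorke bound with precise tail estimates of the form $\tau(w,\gamma_{n-1}^A\gamma_j)\asymp A$, uniformly in $w$, to absorb the tail of $T_\Gamma$ in the chosen norm. The acceleration trick is what makes these tail bounds favorable: long stays near the attracting direction of $\gamma_{n-1}$, which are merely neutral for the original generators, are collapsed into a single element of $T_\Gamma$, turning the induced dynamics into a genuine contraction at every step. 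Promoting $h$ from H\"older to $C^1$ is a comparatively minor point, obtained by bootstrapping the Lasota--Yorke inequality once the eigenfunction has been constructed.
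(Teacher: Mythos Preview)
Your proposal is essentially the paper's own argument: the renewal equation for $N$ on $\Delta$, the transfer operator $\L_s$ on $C^1(\Delta)$, an Ionescu Tulcea--Marinescu/Lasota--Yorke inequality driven by the uniform contraction of the accelerated generators (Proposition~\ref{prop:uniformly-contracting}), Ruelle--Perron--Frobenius for the eigenfunction $h=h_\beta$, a cohomological non-arithmeticity check (the paper's Proposition~\ref{prop:minus-s-tau-regular}, done by showing the gaps $\log T_+(A{+}1)-\log T_+(A)$ at fixed points of $\gamma_{n-1}^A\gamma_{n-2}$ tend to zero), and Lalley's contour-shifting Tauberian argument via $(1-\L_s)^{-1}$. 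One slip worth correcting: from \eqref{eq:new-nth-coefficient} one has $\tau(w,\gamma_{n-1}^A\gamma_j)=\log\bigl(1+(A{+}1)(1-w_j)\bigr)\asymp\log A$, not $\asymp A$, so the tail of $T_\Gamma$ contributes like $\sum_A A^{-\Re(s)}$ and the operator is bounded for $\Re(s)>1$ (Lemma~\ref{lem:transferwelldefinedoncontinuous}) rather than by exponential decay---this does not affect the rest of your outline.
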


\begin{rem}
\label{rem:betabig}The embedding of the $(n-1)$-dimensional version
of $\H$ inside the $n$-dimensional version implies by Theorem \ref{thm:-linear-counting}
that $\beta(n)\geq\beta(n-1)$ and in particular that $\beta(n)\geq2$
for all $n\geq3.$ 
\end{rem}

\subsection{Connection to simple closed curves and character varieties}

Theorem \ref{thm:zagier} can be rephrased
as a counting result for the number of simple\footnote{This means there are no self crossings.}
closed geodesics of length $\leq\log R$ on the modular torus. This is
the topological once-punctured torus that is uniformized by the quotient
of the hyperbolic plane by the group
\[
\mathbf{}\Big\langle\left(\begin{array}{cc}
1 & 1\\
1 & 2
\end{array}\right),\left(\begin{array}{cc}
1 & -1\\
-1 & 2
\end{array}\right)\Big\rangle \leq \mathrm{PSL}_2(\R).
\]

McShane and Rivin \cite{RIVINMCSHANE} actually obtain the analogous
counting result to Theorem \ref{thm:zagier} for simple closed geodesics on
arbitrary hyperbolic once punctured tori, by use of a special norm
on the first homology of the surface. Mirzakhani proved in \cite{MIRZSIMPLE}
an asymptotic counting result, without explicit error term, for simple
closed geodesics on any finite area complete Riemann surface. These
asymptotics have recently been extended by Mirzakhani \cite{MIRZCOUTING}
to more general orbits of the mapping class group. In Mirzakhani's
results the exponents of growth are dimensions of Teichm\"{u}ller
spaces. It is interesting to compare this to our characterization
of Theorem \ref{thm:beta-characterization}.

In \cite{HN}, Huang and Norbury showed that when $n=a=4$ and $k=0$,
 $V(\R_+)$ is a parametrization of the Teichm\"{u}ller space of finite area
hyperbolic structures on $\R P^2$ minus three points, and moreover
the coordinates of points on $V(\R_+)$ are functions of the lengths
of \emph{one-sided}\footnote{This means a thickening of the geodesic
is homeomorphic to a M\"{o}bius band.} simple closed geodesics in the relevant hyperbolic structure.
From these facts they deduce from Baragar's Theorem \ref{thm:Baragar-theorem}
that the number $n^{(1)}_J(L)$ of one sided simple closed geodesics of length $\leq L$ in a hyperbolic structure $J$ on $ \R P^2$ minus three points
satisfies 
\begin{equation*}
\lim_{L \to \infty}\frac{ \log  n^{(1)}_J(L) }{ \log L } =  \beta(4).
\end{equation*}
The second author (Magee) of this paper has recently shown \cite{Mageecurves} that the methods here
can be extended to prove that $n^{(1)}_J(L)$ is asymptotic to $c L ^\beta$, for some $c=c(J)>0$, somewhat in analogy to Mirzakhani's results.

We also mention the recent work of Hu, Tan and Zhang \cite{HPZ} that
describes some regions in $\C^{n}$ where the group of automorphisms
of (\ref{eq:markoff-hurwitzeq}) acts properly discontinuously. This
extends previous work of Goldman \cite{GOLDMAN} that describes ranges
of $k$ in the case of $n=3$ where the group $\Aut(V)$
act ergodically or properly discontinuously (or some combination thereof,
on different components of the variety). Quite strikingly, for certain
ranges of $k$ the action of $\Aut(V)$ is ergodic on $V(\R)$
yet preserves the infinite discrete subset $V(\Z)$.
In \cite{HPZ} the authors also prove a `McShane identity' that gives
a closed form expression for $1$ in terms of an infinite sum over
an orbit of the semigroup; see \cite{MCSHANETHESIS, MCSHANEIDENTITY} for McShane's
original identity.

\subsection{Paper organization}

We prove our theorems in the order we have stated them with earlier
parts of the paper depending on later parts. In Section \ref{sec:MH-tuples-and-moves}
we describe the passage from $V(\Z)$ to $V(\Z_+)$ and
 describe in full the action of the Markoff-Hurwitz moves on $V(\Z_{+}).$
At the end of Section \ref{sec:MH-tuples-and-moves} we have fixed
a large compact region of $V(\Z_{+})$ outside of which the orbits
of the action  of  Markoff-Hurwitz generators  are a disjoint union of a finite number of
orbits that we understand well. In Section \ref{sec:converting-linear-to-nonlinear}
we fit the counting of these orbits to certain counts for the linear
semigroup $\Gamma'.$ Using Theorem \ref{thm:-linear-counting} as
a black box, we prove Theorem \ref{thm:main-counting}. In Section
\ref{sec:linear-semigroup-count}, we prove Theorems \ref{thm:-linear-counting}
and \ref{thm:beta-characterization} given Proposition \ref{prop:uniformly-contracting}
that says the action of $\Gamma'$ on projective space is contracting.
It is at this point we establish the connection with Baragar's exponent
of growth that we call $\beta.$ Finally, in Section \ref{sec:dynamics}
we prove Proposition \ref{prop:uniformly-contracting}.

\subsection{Notation}

For the reader's convenience we describe the notation we use in this
paper. We will use $\mathbf{1}$ for an indicator function. A vector
with an entry $\hat{\bullet}$ with a hat means that that entry is
omitted. We use Vinogradov notation $O,o,\ll,\gg$ in the standard
way. Any implied constants may depend on $n,a,k$ that we view as
fixed throughout much of the paper. If there is any dependence of
an implied constant on a variable we denote this as a subscript e.g.
$\ll_{\e}$, and we also use subscripts to indicate which variable
is tending to a limit, e.g. $o_{a\to\infty}(1).$ For the sake of
convenience, we take the liberty of applying functions to vectors,
which means we apply the function component-wise, and we write inequalities
between vectors to mean that the inequality holds at every component.
For a set $S$ in a semigroup we may write $S^{(k)}$ for the $k$-fold
product of the set with itself. We also write $\R_{+},\R_{\geq0}$
for the sets of positive (resp. nonnegative) real numbers, and similar
for integers. We write $\{x\}$ for the fractional part of a real
number $x$, that is, $x=n+\{x\}$ for $n\in\Z$ and $\{x\}\geq0.$

\subsection*{Acknowledgements }

We would like to thank Peter Sarnak, Giulio Tiozzo, and Peter Whang for helpful
conversations about this work.

\section{Markoff-Hurwitz tuples and moves}

\label{sec:MH-tuples-and-moves}

\subsection{Basic properties of the Markoff-Hurwitz equation}

\subsubsection*{The automorphism group}
By an automorphism of $V$ we mean a polynomial automorphism of $V(\C)$. We write $\Aut(V)$ for the group of all such maps. By results of Horowitz \cite{Horowitz} when $n=3$ and Hu, Tan and Zhang \cite[Theorem 1.1]{HPZ} for $n\geq4$, one has
 \[
\Aut(V) = \mathcal{G} \rtimes(N\rtimes S_{n})
\]
where
\begin{enumerate}
\item $N$ is the group of transformations that change the sign of an even
number of variables. Hence $|N| = 2^{n-1}$.
\item $S_{n}$ is the symmetric group on $n$ letters that acts by permuting
the coordinates of $\C^{n}$.
\item $\mathcal{G}$ is the nonlinear group generated by the Markoff-Hurwitz moves $m_j$ discussed in the Introduction.
\end{enumerate}
One important corollary of this classification is that $V(\Z)$ is invariant under $\Aut(V)$.

\label{sub:basic-properties}

\subsubsection*{Exceptional solutions}

For $a=1$ and $a=2$ there are certain exceptional families of points in $V(\Z)$ whose
growth rate is totally different from the points we wish to count\footnote{See Silverman \cite{SILVERMANBOOK} for a discussion of a phenomenon of surfaces containing curves that have many more integral points than one would expect from the surface as a whole.}. These
appear only for certain values of $k$ and we describe them now.
\begin{defn} 
\label{def:exceptional}We say that $x\in V(\Z_{+})$ is 
a \emph{fundamental exceptional solution}
if it belongs to one of the following two families
\begin{enumerate}
\item One has $a=1$ and after reordering the coefficients of $x$ so that
$x_{1}\leq x_{2}\leq\ldots\leq x_{n}$
\[
x_{1}=x_{2}=\ldots=x_{n-3}=1,\quad x_{n-2}=2.
\]
In this case $x$ is a Markoff-Hurwitz tuple if and only if 
\begin{equation}
(x_{n-1}-x_{n})^{2}=k-n-1.\label{eq:exceptional1}
\end{equation}
\item One has $a=2$ and after reordering the coefficients of $x$ so that
$x_{1}\leq x_{2}\leq\ldots\leq x_{n}$ 
\[
x_{1}=x_{2}=\ldots=x_{n-2}=1.
\]
In this case $x$ is a Markoff-Hurwitz tuple if and only if 
\begin{equation}
(x_{n-1}-x_{n})^{2}=k-n+2.\label{eq:exceptional2}
\end{equation}
\end{enumerate}
We say that $x\in V(\Z)$ is an \emph{exceptional solution} if $x$ is in the $\Aut(V)$-orbit of a fundamental exceptional solution. We write $\E$ for the collection of exceptional solutions in $V(\Z)$. If $x \in V(\Z)$ is not an exceptional solution we say it is an \emph{unexceptional solution}.
\end{defn}

Note that if (\ref{eq:exceptional1}) or (\ref{eq:exceptional2})
occur then they occur in an infinite family for that given $n,a,k.$
In either case, all sufficiently 
  large
positive
   integers appear as the maximal
entry of some fundamental exceptional solution and this maximal entry determines the tuple up
to reordering. Therefore for some $c>0$ there are $cR+O(1)$
fundamental exceptional solutions
with maximal entry $\leq R$. This is not the type of growth we want
to study (cf. Theorem \ref{thm:main-counting}). 
 It is also clear, but useful to note, that the property of being exceptional (respectively, unexceptional) in $V(\Z)$ is $\Aut(V)$-invariant.

\subsubsection*{Passage from $V(\Z)$ to $V(\Z_+)$}

We now describe the relationship between asymptotic counting of $V(\Z)-\E$ and $V(\Z_+)-\E$. Recall that $n\geq 3$, $a\geq 1$ and $k$ are fixed integers, and $N$ is the group of automorphisms of $V=V_{n,a,k}$ that change the sign of an even number of the coordinates. We decompose the action of $N$ on $V(\Z)-\E$ as follows.

Let $X_0$ be the elements of $V(\Z)-\E$ with at least one coordinate equal to $0$. If $k <0$ then $X_0$  is empty, and if $k\geq0$ then one obtains for $(x_1,\ldots,x_n)\in X_0$ the equation
\begin{equation*}
x_1^2 + \ldots + x_n^2 = k
\end{equation*}
from which it is apparent that $X_0$  is finite, with a bound on its size depending on $n$ and $k$. To indicate this we write $|X_0| = O_{n,k}(1)$.

Now let $X(R) = (V(\Z)- \E - X_0 )\cap B(R)$, the unexceptional elements of $V(\Z)$ with norm $\leq R$ and no zero coordinate. The group $N$ acts freely on $X(R)$. Therefore
\begin{equation*}
2^{n-1} |N \backslash X(R) | = |X(R) | .
\end{equation*}
The orbits of $N$ on $X(R)$  fall into two categories, according to which we decompose 
\begin{equation*}
N \backslash X(R) = Y_+(R) \sqcup Y_-(R)
\end{equation*}
where $Y_+(R)$ are orbits with a  unique representative with all coordinates positive,  and $Y_-(R)$ the remaining orbits, which have a unique representative with $x_1 < 0$ and $x_i>0$ for $i\geq 2$.

	We now argue that $|Y_-(R)|$ is bounded independently of $R$. To see this, consider $N.x\in Y_-(R)$, where $x$ is the  representative described before with $x_1$ the only negative coordinate. Let $\tilde{x}_1 = -x_1$ and $\tilde{x}_i = x_i$ for $i\geq 2$ be the coordinates of $\tilde{x}$. The parametrization $x \to \tilde{x}$ is obviously 1:1 and
\begin{equation*}
\tilde{x}_1^2 + \ldots \tilde{x}_n^2 + a\tilde{x}_1 \tilde{x}_2 \ldots \tilde{x}_n = k.
\end{equation*}
Because all the $\tilde{x}_i > 0$ and $a\geq 1$, this equation has no solutions when $k\leq0$ and only finitely many when $k>0$, with a bound depending only on $n$ and $k$. In any case, this shows $|Y_-(R)| = O_{n,k}(1)$.
 
Since $Y_+(R)$ is parametrized 1:1 by $(V(\Z_+)-\E)\cap B(R)$, the previous arguments combine to show
\begin{align*}
 | (V(\Z)-\E) \cap B(R) | =  |X(R)|+|X_0 \cap B(R) |  &= 2^{n-1} |N \backslash X(R) | + O_{n,k}(1)  \\
 & =  2^{n-1} ( |Y_+(R)| + |Y_-(R)| ) + O_{n,k}(1) \\
 &= 2^{n-1} | (V(\Z_+) -\E ) \cap B(R) | + O_{n,k}(1).
 \end{align*}

\subsubsection*{Infinite descent}

The following proposition says that outside of a compact set, the
effects of the moves $m_{i}$ on the maximal entries of unexceptional
Markoff-Hurwitz tuples are at least somewhat predictable. This is
a very special feature of the Diophantine equation (\ref{eq:markoff-hurwitzeq})
that will allow us to count solutions.
\begin{prop}
\label{prop:mhmove-dynamics}Suppose $k \in \Z$. There is a compact
set $K_{0}=K_{0}(n,a,k)$ such that for unexceptional $x\in V(\Z_{+})-K_{0}$
the following hold:
\begin{enumerate}
\item \label{enu:decrease-largest}If $x_{j}$ is the largest coordinate
of $x$ then the largest entry of $m_{j}(x)$ is smaller than $x_{j}$,
that is, $(m_{j}(x))_{i}<x_{j}$ for all $i.$
\item \label{enu:The-largest-entry-unique}The largest entry of $x$ appears
in exactly one coordinate.
\item \label{enu:new-largest} If $x_{j}$ is not the largest coordinate
of $x$ then it becomes the largest after the move $m_{j}$, that
is, $(m_{j}(x))_{j}>(m_{j}(x))_{i}$ for all $i\neq j.$ (This property
holds for all $x\in V(\Z_{+})$.)
\item \label{enu:dinstinct-forward}If $x_{j}$ is not the largest coordinate
of $x$, then the number of distinct entries of $m_{j}(x)$ is at
least the number of distinct entries of $x.$ In particular, if $x$
has distinct entries then $m_{j}(x)$ has distinct entries.
\item \label{enu:preserve-positivity} Every move $m_j$ maps $V(\Z_+)-K_0$ into $V(\Z_+)$.
\end{enumerate}
The compact $K_{0}$ can be taken to be a closed ball about the origin
in the $\ell^{\infty}$ norm on $\R^{n},$ and the result still holds
after increasing the radius of $K_{0}.$
\end{prop}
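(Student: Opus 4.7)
The plan is to translate each of the five listed properties into an explicit algebraic condition on the coordinates and to show that the failure of any such condition, together with $x\in V(\Z_+)$ and unexceptionality, forces $x$ into a compact subset of $\R^n$ that is then absorbed into $K_0$. Writing $P_j=\prod_{i\neq j}x_i$, the move replaces $x_j$ by $x_j'=aP_j-x_j$, and Vieta's formulas applied to the quadratic $y^2-aP_j y+(\sum_{i\neq j}x_i^2-k)=0$ (whose roots are $x_j$ and $x_j'$) give
\[
x_j-x_j' \;=\; 2x_j-aP_j \;=\; \frac{x_j^2-\sum_{i\neq j}x_i^2+k}{x_j}, \qquad x_j x_j' = \sum_{i\neq j}x_i^2-k.
\]
In particular $x_j'<x_j$ is equivalent to $x_j^2>\sum_{i\neq j}x_i^2-k$, and $x_j'>0$ to $aP_j>x_j$; these are the inequalities one needs outside $K_0$.

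First I would treat property (\ref{enu:The-largest-entry-unique}). If two coordinates $x_j=x_l=M$ both equal the maximum, substituting into (\ref{eq:markoff-hurwitzeq}) and using $x_i\leq M$ for all $i$ yields $a\prod_{i\neq j,l}x_i\leq n+O(M^{-2})$. For $M$ large this constrains $\prod_{i\neq j,l}x_i$ to finitely many values, forcing almost all remaining coordinates to equal $1$; the residual form of (\ref{eq:markoff-hurwitzeq}) is then a low-dimensional Diophantine identity that either bounds $M$ or reproduces exactly one of the exceptional configurations of Definition \ref{def:exceptional} (namely the borderline cases $a=1$, $k=n+1$ and $a=2$, $k=n-2$, in which $x_{n-1}=x_n$ is forced and the tuple is exceptional).

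Given (\ref{enu:The-largest-entry-unique}), property (\ref{enu:decrease-largest}) reduces to the inequality $x_n^2>\sum_{i<n}x_i^2-k$ when $x_n$ is the strict maximum. Its failure means that both roots $x_n,x_n'$ of the quadratic in the $n$-th variable are $\geq aP_n/2$, and then $x_n x_n'=\sum_{i<n}x_i^2-k\geq x_n^2$, so $\sum_{i<n}x_i^2\geq x_n^2+k$. Combined with $x_i\leq x_n$ for $i<n$ and with (\ref{eq:markoff-hurwitzeq}), a short case analysis again drives $x$ into a bounded configuration or onto an exceptional family. Properties (\ref{enu:new-largest}) and (\ref{enu:dinstinct-forward}) are algebraically equivalent to the inequalities $aP_j>x_j+\max_{i\neq j}x_i$ and $aP_j\neq x_j+x_l$ respectively, and are analyzed by the same recipe: extract the overall maximum $x_n$ from $P_j=x_n\prod_{i\neq j,n}x_i$ and split on whether $\prod_{i\neq j,n}x_i\geq 2$ (in which case the inequality is immediate) or equals $1$, substituting $x_i=1$ for $i\neq j,n$ into (\ref{eq:markoff-hurwitzeq}) and solving the resulting two-variable quadratic to confine $(x_j,x_n)$ to a bounded set. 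Property (\ref{enu:preserve-positivity}) follows from (\ref{enu:new-largest}) when $x_j$ is not maximal, and from the Vieta identity $aP_j=x_j+x_j'$ with $x_j'\in\Z$ (so $x_j'\geq 1\iff aP_j\geq x_j+1$) when $x_j$ is the max, by the same root-comparison argument.

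The principal obstacle is combinatorial rather than analytic: for each property one must match the set of algebraic failure cases precisely with the exceptional families of Definition \ref{def:exceptional} (together with a bounded pool of small-entry solutions). The identities $(x_{n-1}-x_n)^2=k-n-1$ and $(x_{n-1}-x_n)^2=k-n+2$ that define those families are exactly the conditions the equation forces once all but two or three coordinates are pinned to $1$ or $2$, so lining up the casework with these identities is where the real work lies. Once this matching is carried out, $K_0$ may be chosen as any closed $\ell^\infty$-ball large enough to contain all the finite failure configurations, which is permitted by the final sentence of the proposition.
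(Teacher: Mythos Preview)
Your framework is the paper's: use Vieta on the quadratic in one variable, show that failure of the desired inequality bounds the product of the small coordinates, and then match the residual low-dimensional equation against the exceptional families. Two points are worth flagging.

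\textbf{Ordering.} You prove Part~\ref{enu:The-largest-entry-unique} independently and then use it to reduce Part~\ref{enu:decrease-largest} to $x_n'<x_n$. The paper does it the other way: Part~\ref{enu:decrease-largest} is the main argument, and Part~\ref{enu:The-largest-entry-unique} is then a one-line corollary (if the maximum were attained at two indices, the move at one of them would not decrease the maximum, contradicting Part~\ref{enu:decrease-largest}). Your order is legitimate but forces you to run the product-bound argument twice.

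\textbf{A gap in Part~\ref{enu:decrease-largest}.} From the failure $x_n'\ge x_n$ you correctly deduce $\sum_{i<n}x_i^2\ge x_n^2+k$, but this together with $x_i\le x_n$ does \emph{not} launch a case analysis: it only forces $x_{n-1}$ to be comparable to $x_n$ and gives no upper bound on $ax_1\cdots x_{n-2}$. The missing step (Cassels' trick) is to evaluate the quadratic $f(T)=T^2-aP_nT+\sum_{i<n}x_i^2-k$ at $T=x_{n-1}$: since both roots $x_n,x_n'$ are $\ge x_{n-1}$, one has $f(x_{n-1})\ge0$, and using $x_i\le x_{n-1}$ for $i\le n-1$ yields $(n-ax_1\cdots x_{n-2})x_{n-1}^2\ge k$, hence $ax_1\cdots x_{n-2}\le n$ once $x_{n-1}^2>|k|$. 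This is precisely the bound that reduces everything to finitely many values of $(x_1,\ldots,x_{n-2})$ and lets the case $ap=2$ be matched against the two exceptional families. You already obtain this bound in your Part~\ref{enu:The-largest-entry-unique} argument (with $M$ in place of $x_{n-1}$); it just needs to be reused here.

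Two minor corrections. Your statement ``both roots $\ge aP_n/2$'' is not what failure gives: from $x_n+x_n'=aP_n$ and $x_n'\ge x_n$ one gets $x_n\le aP_n/2\le x_n'$, so only the larger root exceeds $aP_n/2$; the product inequality $x_nx_n'\ge x_n^2$ you actually use is unaffected. And Part~\ref{enu:dinstinct-forward} requires no separate condition $aP_j\ne x_j+x_l$: it follows immediately from Part~\ref{enu:new-largest}, since $x_j'$ is strictly larger than every other coordinate of $m_j(x)$ and those other coordinates are the old $x_i$, $i\ne j$.
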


\begin{proof}[Proof of Proposition \ref{prop:mhmove-dynamics}]

\emph{Part \ref{enu:decrease-largest}. }Suppose without loss of generality
that $x_{1}\leq x_{2}\leq\ldots\leq x_{n-1}\leq x_{n}$. Adapting
a proof of Cassels from \cite[pg. 27]{CASSELS}, consider the quadratic
polynomial in $x_{n}$ given by 
\[
f(T)=T^{2}-ax_{1}x_{2}\ldots x_{n-1}T+x_{1}^{2}+x_{2}^{2}+\ldots+x_{n-1}^{2}-k.
\]
Then $f$ has roots at $x_{n}$ and $x'_{n}$ where $x'_{n}$ is the
last entry of $m_{n}(x)$. The conclusion of Part \ref{enu:decrease-largest}
holds unless
\[
x_{n-1}\leq x_{n}\leq x'_{n}
\]
or 
\[
x'_{n}<x_{n-1}=x_{n}.
\]
In either case, since the coefficient of $T^{2}$ is positive it follows
that $f(x_{n-1})\geq0.$ Then
\begin{eqnarray*}
0 & \leq & f(x_{n-1})=-ax_{1}x_{2}\ldots x_{n-1}^{2}+x_{1}^{2}+x_{2}^{2}+\ldots+2x_{n-1}^{2}-k.\\
 & \leq & (n-ax_{1}x_{2}\ldots x_{n-2})x_{n-1}^{2}-k
\end{eqnarray*}
so
\[
ax_{1}x_{2}\ldots x_{n-2} + \frac{k}{x_{n-1}^{2}} \leq n.
\]
By an easy argument (cf. Section \ref{incK}) it is possible to increase the radius of $K_0$  so that for $x \in V(\Z_+) - K_0$ ordered as we assume,  $x_{n-1} \geq \left(\frac{x_n}{2a}\right)^{\frac{1}{n-1}}$ . In particular, we can increase the radius of $K_0$ so that under the ongoing assumptions on $x$, $x_{n-1}^2 > |k|$.  Then, since $a, n, x_1, x_2, \dots, x_{n-1}$ are positive integral, we have
\[
ax_{1}x_{2}\ldots x_{n-2} \leq n.
\]
This
means there are a finite number of possibilities for $x_{1},x_{2},\ldots,x_{n-2}.$ 

In the case $x'_{n}\geq x_{n}$ this directly implies
\[
ax_{1}x_{2}\ldots x_{n-2}x_{n-1}-x_{n}\geq x_{n}
\]
so 
\[
ax_{1}x_{2}\ldots x_{n-2}x_{n-1}x_{n}\geq2x_{n}^{2}.
\]
Then from (\ref{eq:markoff-hurwitzeq}) 
\[
x_{n}^{2}\leq x_{1}^{2}+\ldots+x_{n-2}^{2}+x_{n-1}^{2}-k
\]
and it follows that
\[
(x_{n}+x_{n-1})(x_{n}-x_{n-1})\leq x_{1}^{2}+\ldots+x_{n-2}^{2}-k.
\]
If $x_{n}-x_{n-1}>0$ then the finite number of possibilities for
$x_{1},x_{2},\ldots,x_{n-2}$ yield a finite number of possible $x$. 

The alternative is that $x_{n}=x_{n-1}$, and the following logic
also applies to the case $x'_{n}<x_{n-1}=x_{n}.$ Then $x_{n}$ is
a root of one of finitely many quadratic polynomials

\[
(2-ax_{1}\ldots x_{n-2})x_{n}^{2}+x_{1}^{2}+\ldots+x_{n-2}^{2}-k=0.
\]
Again, this yields finitely many possibilities for $x$ aside from
those having $x_{1},\ldots,x_{n-2}$ such that $2-ax_{1}\ldots x_{n-2}=0$
and $x_{1}^{2}+\ldots+x_{n-2}^{2}-k=0.$ Note that if $k \leq 0$  we have
exhausted the possibilities. Otherwise we must have either $a=1$
and $k=(n-3)1+4$ in which case

\[
x_{1}=x_{2}=\ldots=x_{n-3}=1,\quad x_{n-2}=2,
\]
or $a=2$ and $k=n-2,$ in which case

\[
x_{1}=x_{2}=\ldots=x_{n-2}=1.
\]
These are precisely the 
fundamental
 exceptional solutions that are ruled out by
hypothesis. Therefore for any given $n,a,k$ only finitely many unexceptional
$x$ do not satisfy Part \ref{enu:decrease-largest} of the Proposition.

\emph{Part \ref{enu:The-largest-entry-unique}. }If the largest entry
of $x$ is not unique then performing the move at one of the largest
entries does not decrease the largest entry, contradicting Part \ref{enu:decrease-largest}.

\emph{Part \ref{enu:new-largest}. }Suppose $x_{1}\leq x_{2}\leq\ldots<x_{n}$
and let $x'=(x'_{1},\ldots,x'_{n})=m_{j}(x)$ with $j<n.$ The coefficient
$x'_{j}$ is given by
\[
x'_{j}-x_{n}=a\prod_{i\neq j}x_{i}-x_{j}-x_{n}=x_{n}\left(a\prod_{i\neq j,n}x_{i}-1\right)-x_{j.}
\]
If $a\geq2$ then the right hand side is $\geq x_{n}-x_{j}>0$ so
we are done. If $a=1$ and $x_{n-2}\geq2$ then we are also done by
a similar argument. 

The remaining scenario is $a=1$ and $x_{1}=x_{2}=\ldots=x_{n-2}=1.$
In this case $x$ satisfies the equation

\[
x_{n-1}^{2}+x_{n}^{2}-x_{n-1}x_{n}=k-n+2.
\]
The form on the left hand side is positive definite so only finitely
many possible solutions exist for $(x_{n-1},x_{n})$ given $n$ and
$k.$ Add these to the compact set of Part 1.

\emph{Part \ref{enu:dinstinct-forward}. }This follows from Part \ref{enu:new-largest}
since if $x'=m_{j}(x)$ as in the Proposition, then all the entries
of $x'_{i}$ with $i\neq j$ are distinct, but $x'_{j}$ is larger
than all of these.

\emph{Part \ref{enu:preserve-positivity}.}
By Part \ref{enu:new-largest} it suffices to check that we can increase the radius of $K_0$ so that for $x\in V(\Z_+) -K_0$ with  $x_1 \leq x_2 \leq \ldots \leq x_n$, $m_n(x)_n>0$. If not, one obtains $ax_1 \ldots x_{n-1} - x_n \leq 0$ from which it follows $a x_1 x_2 \ldots x_n \leq x_n^2$. The Markoff-Hurwitz equation then gives
\begin{equation}\label{eq:sum-of-squares}
x_1^2 + \ldots + x_{n-1}^2  \leq k.
\end{equation}
 As in Part \ref{enu:decrease-largest}, we can increase the radius of $K_0$ so that under the ongoing assumptions on $x$, $x_{n-1}^2\geq |k|$. It follows then that \eqref{eq:sum-of-squares} cannot occur outside of $K_0$.

\end{proof}

\begin{cor}[Infinite descent]
\label{cor:infinite-descent}Any unexceptional Markoff-Hurwitz tuple
can be algorithmically reduced to one in the compact set $K_{0}$ by a
series of Markoff-Hurwitz moves that strictly decrease maximal entries. 
\end{cor}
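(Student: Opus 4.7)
The plan is to define an explicit algorithm and then verify, using Proposition \ref{prop:mhmove-dynamics}, that it terminates in $K_0$. Given an unexceptional $x \in V(\Z_+)$, the algorithm is: if $x \in K_0$, stop; otherwise, by Part \ref{enu:The-largest-entry-unique} there is a unique index $j$ with $x_j = \max_i x_i$, apply the move $m_j$, and repeat with $m_j(x)$ in place of $x$.

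First I would check that the algorithm is well-defined, i.e.\ that the iterates remain in the domain $V(\Z_+)\cap(\text{unexceptional})$ to which the hypotheses of Proposition \ref{prop:mhmove-dynamics} apply. Each move $m_j$ is a polynomial automorphism of $V$, so $m_j(x) \in V(\Z)$; Part \ref{enu:preserve-positivity} of Proposition \ref{prop:mhmove-dynamics} guarantees $m_j(x) \in V(\Z_+)$ when $x \in V(\Z_+) - K_0$. Unexceptionality is preserved because $m_j \in \mathcal{G} \subset \Aut(V)$ and, as noted just after Definition \ref{def:exceptional}, the property of being (un)exceptional is an $\Aut(V)$-invariant property of elements of $V(\Z)$.

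Next I would show termination. By Part \ref{enu:decrease-largest}, as long as $x \notin K_0$ the move $m_j$ at the (unique) largest coordinate produces $x' = m_j(x)$ with
\[
\max_i (x')_i < \max_i x_i.
\]
Thus the sequence of maximal entries of the iterates is a strictly decreasing sequence of positive integers (positivity being preserved by the previous paragraph), which must be finite. The algorithm can therefore only halt by landing in $K_0$, producing the required reduction through moves that strictly decrease the maximum.

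The substance of the statement is really contained in Proposition \ref{prop:mhmove-dynamics}, so I do not expect any serious obstacle; the only point that needs a moment of care is confirming that unexceptionality is propagated along the descent, which is exactly where one uses that $\mathcal{G}\subset\Aut(V)$ and that $\E$ is defined as a union of $\Aut(V)$-orbits.
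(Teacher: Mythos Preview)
Your proof is correct and is exactly the argument the paper has in mind: the corollary is stated without proof as an immediate consequence of Proposition~\ref{prop:mhmove-dynamics}, and you have supplied precisely the natural details (well-definedness via Parts~\ref{enu:The-largest-entry-unique} and~\ref{enu:preserve-positivity} together with $\Aut(V)$-invariance of $\E$, and termination via Part~\ref{enu:decrease-largest} and the well-ordering of $\Z_+$).
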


Corollary \ref{cor:infinite-descent} was established by Markoff \cite{MARKOFF}
in the case $n=a=3$ and $k=0.$ In that case, every Markoff triple
can be reduced to $(1,1,1)$ by a series of Markoff moves. Hurwitz
\cite{HURWITZ} showed the analogous result for $n=a>3$ and $k=0$
and showed more generally that when $k=0,$ the Markoff-Hurwitz tuples
can be reduced to a finite set of fundamental solutions. These fundamental
solutions were characterized by Baragar in \cite{BARAGAR1} whenever
$a\geq2(n-1)^{1/2}$; he also presented two different constructions yielding sequences
of equations whose sets of fundamental solutions grow without bound.

\subsection{The polynomial semigroup}

We now perform a normalization that allows us to treat all parameters
$a,k$ with a semigroup action that only depends on $n.$ For $x\in V(\Z_{+})$
let

\[
z=a^{\frac{1}{n-2}}x.
\]
Note that $a^{\frac{1}{n-2}}\geq1$ with equality if and only if $a=1.$
Then $z=(z_{1},\ldots,z_{n})$ satisfies the equation

\begin{equation}
z_{1}^{2}+z_{2}^{2}+\ldots+z_{n}^{2}=z_{1}z_{2}\ldots z_{n}+k'\label{eq:markoff-hurwitz-normalized}
\end{equation}
where

\[
k'=ka^{\frac{2}{n-2}}.
\]
Say that $z$ is exceptional/unexceptional if $x$ has the corresponding
property. We will also work with ordered tuples $z$ so that

\[
z_{1}\leq z_{2}\leq\ldots\leq z_{n}.
\]
Write $\M$ for the set of all such ordered tuples $z\in a^{\frac{1}{n-2}}\Z_{+}^{n}$
satisfying (\ref{eq:markoff-hurwitz-normalized}). Counting

\[
\M\cap B(R)
\]
 is not equivalent to counting $V(\Z_{+})\cap B(a^{-\frac{1}{n-2}}R)$
due to the presence of elements with duplicate entries. We will return
to treat this point in Section \ref{sub:Multiplicities}. Let 
\begin{eqnarray}
K & = & a^{\frac{1}{n-2}}K_{0}\label{eq:KvsK0}
\end{eqnarray}
 where $K_{0}$ is the compact set from Proposition \ref{prop:mhmove-dynamics}. 

The Markoff-Hurwitz moves $\{m_{j}\}$ induce the moves

\begin{equation}
\lambda_{j}(z_{1},\ldots,z_{n})=\left(z_{1},\ldots,\widehat{z_{j}},\ldots,z_{n},\prod_{i\neq j}z_{i}-z_{j}\right),\quad1\leq j\leq n-1,\label{eq:moves}
\end{equation}
where $\widehat{\bullet}$ denotes omission\footnote{The reason we now have $n-1$ moves instead of $n$ is that we never
perform the move that will decrease the maximal entry, therefore moving
us towards $K$. This eliminates backtracking from our `random walk'.}. Since $K$ is a closed ball about $0$ in the $\ell^{\infty}$ norm,
Part \ref{enu:new-largest} of Proposition \ref{prop:mhmove-dynamics}
implies that the $\{\lambda_{j}\}$ preserve $\M-K.$ Let $\Lambda=\Lambda(n)$
denote the semigroup of piecewise polynomial self-maps of $\C^{n}$
generated by the $\lambda_{j}.$ In Section \ref{sub:Multiplicities}
we will reduce Theorem \ref{thm:main-counting} to an orbital counting
estimate. For $z_{0}\in\M-K$ let 

\[
\Lambda.z_{0}\subset\M-K
\]
denote the orbit of $z_{0}$ under $\Lambda$. 
\begin{lem}
\label{lem:Lambda-is-free}If $z_{0}\in\M-K$ has distinct entries
then the map $\Lambda\to\M-K$ given by

\[
\lambda\mapsto\lambda(z_{0})
\]
is injective. It follows that the semigroup $\Lambda$ is free\footnote{ As a semigroup of polynomial maps.}
 on
the generators $\{\lambda_{j}\}.$ \end{lem}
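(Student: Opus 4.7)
The plan is to show that a single move $\lambda_j$ is invertible on its image in a way that also recovers the index $j$ used. Injectivity of the orbit map then follows by a routine induction on word length, and freeness of $\Lambda$ is an immediate consequence.

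First I would verify that the forward orbit of $z_0$ retains all the structure we need. By Part \ref{enu:dinstinct-forward} of Proposition \ref{prop:mhmove-dynamics} (in its normalized form), the property of having distinct entries propagates along the orbit. By Part \ref{enu:new-largest}, applying any $\lambda_j$ with $j\leq n-1$ produces a tuple whose $n$-th coordinate is a strict maximum; since the first $n-1$ entries of $\lambda_j(z)$ are just the original ordered list with $z_j$ omitted, the output remains strictly ordered with $y_1<\ldots<y_n$. In particular every tuple reached from $z_0$ has its strict maximum in the last coordinate.

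The heart of the argument is the inversion step. Given $y=\lambda_j(z)$ with $z$ strictly ordered in $\M-K$, the removed coordinate $z_j$ and the new entry $y_n$ are precisely the two roots of the quadratic in $T$ obtained by fixing all coordinates of \eqref{eq:markoff-hurwitz-normalized} except the $j$-th. Since the product $\prod_{i\neq j}z_i$ equals the product of the surviving coordinates, namely $y_1y_2\cdots y_{n-1}$, Vieta's formulas give
\[
z_j = \prod_{i=1}^{n-1}y_i - y_n.
\]
Remarkably, the right-hand side depends on $y$ alone, not on $j$. The index $j$ is then recovered uniquely as the (necessarily unique, by distinctness) position at which this value must be inserted into $(y_1,\ldots,y_{n-1})$ to yield a strictly ordered $(n-1)$-tuple.

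To finish, I induct on the sum of the lengths of two words $w,w'$ in $\{\lambda_1,\ldots,\lambda_{n-1}\}$ with $w(z_0)=w'(z_0)$. The inversion recovers both the last generator applied and the predecessor tuple, forcing the last letters to agree and reducing the problem to strictly shorter words. The degenerate case in which one word is exhausted while the other is not is ruled out because each $\lambda_j$ strictly increases the $n$-th coordinate, so no nontrivial word can fix $z_0$. Injectivity of the orbit map then implies freeness of $\Lambda$ as a semigroup of polynomial maps: distinct words in the generators already produce distinct values at $z_0$, hence distinct maps. The main obstacle, such as it is, is spotting the Vieta identity that makes $z_j$ computable from $y$ without knowing $j$; once that is in hand, everything else is bookkeeping with Proposition \ref{prop:mhmove-dynamics}.
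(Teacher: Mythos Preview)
Your injectivity argument is correct and is essentially the paper's approach made explicit: both hinge on the fact that a single move $\lambda_j$ can be uniquely undone on tuples with distinct entries, and your Vieta identity $z_j=\prod_{i<n}y_i-y_n$ is a clean way to see that the predecessor---and hence the index $j$---is determined by $y$ alone. The paper phrases the same point more tersely, observing that the first $n-1$ coordinates of $\lambda_{j_1}(z)$ and $\lambda_{j_2}(z)$ differ as sets when the $z_i$ are distinct; your induction on word length is a cleaner replacement for the paper's ``at some point there must be\ldots'' reduction.

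There is, however, a small genuine gap in your deduction of freeness. The semigroup $\Lambda$ (as a semigroup of polynomial self-maps of $\C^n$) depends only on $n$, while $\M$ and $K$ depend on $a$ and $k$; for some parameters $V(\Z_+)-K_0$ may contain no unexceptional points at all, so the hypothesis of the first part is vacuous and nothing follows about $\Lambda$. The paper closes this by exhibiting, for each $n$, a specific choice (namely $a=n$, $k=0$) for which the orbit of $(1,\ldots,1)$ is infinite and unexceptional, and then using Part~\ref{enu:new-largest} of Proposition~\ref{prop:mhmove-dynamics} to reach a point with all entries distinct. You should add this existence step.
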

\begin{proof}
For the first part, if the map is not injective then at some point
there must be $\lambda_{1}\in\Lambda$ and some $j_{1}\neq j_{2}$
such that

\begin{equation}
\lambda_{j_{1}}\lambda_{1}(z_{0})=\lambda_{j_{2}}\lambda_{1}(z_{0}).\label{eq:clash}
\end{equation}
Since by Proposition \ref{prop:mhmove-dynamics}, Part \ref{enu:dinstinct-forward}
the entries of $\lambda_{1}(z_{0})$ are distinct we find $z=\lambda_{1}(z_{0})$
with distinct entries so that $\lambda_{j_{1}}z=\lambda_{j_{2}}z.$
But this cannot be the case since e.g. the sets $\{z_{1},\ldots,\widehat{z_{j_{1}}},\ldots,z_{n}\}$
and $\{z_{1},\ldots,\widehat{z_{j_{2}}},\ldots,z_{n}\}$ are not the
same.

For the second part it is enough to find some $a$ and $k$ so that
there is a point in $V(\Z_{+})-K$ with all entries distinct. Then
freeness of $\Lambda$ follows from the first part of the proof. 

Given $n$ we first choose some $a$ and $k$ so that $V(\Z_{+})$
contains an infinite orbit. For example, the orbit of $(1,1,\ldots,1)$
in the case $a=n$ and $k=0$ is infinite and contains no exceptional
points. Then we may find $z_{0}$ outside $K$ with distinct entries,
since it is possible to 
increase
the number of distinct entries by application
of 
$\lambda_i$, using  Proposition \ref{prop:mhmove-dynamics},
Part \ref{enu:new-largest}. 
\end{proof}

\subsection{Multiplicities\label{sub:Multiplicities}}

In the rest of the paper we will count in orbits of the free semigroup
$\Lambda.$ It is extremely useful to be able to work with a fixed free
semigroup for each $n.$ The cost of this, however, is that $\Lambda$
acts on ordered tuples. Since the original problem was to count points
in $V(\Z_{+})$ we therefore need to take into account the multiplicity
of the $\order$ map $V(\Z_{+})\to\M$.

This is best done in relation to the moves $m_{j}.$ Given
  $x\in V(\Z_{+})-K_{0},$
we say that a sequence

\[
j_{1},j_{2},j_{3},\ldots,j_{l},\ldots
\]
is \emph{admissible }for $x$ if for all $l,$ $j_{l}$ is not the
largest coordinate of 

\[
x^{(l-1)}=m_{j_{l-1}}m_{j_{l-2}}\ldots m_{j_{2}}m_{j_{1}}x.
\]
Notice then that the largest entries of $x^{(l)}$ are increasing
in $l$ and therefore $x^{(l)}\in V(\Z_{+})-K_{0}$ for all $l\geq1.$
Also, a sequence is admissible if and only if $j_{1}$ is not the
largest coordinate and $j_{l}\neq j_{l-1}$ for any $l\leq2.$ Write
$\Sigma^{*}(x)$ for the set of all finite admissible sequences for
$x.$
\begin{lem}
\label{lem:injective-walk}Given $x\in V(\Z_{+})-K_{0}$ the map $\phi_{x}:\Sigma^{*}(x)\to V(\Z_{+})$
given by 

\[
\phi_{x}(j_{1},j_{2},j_{3},\ldots,j_{l})=m_{j_{l}}m_{j_{l-1}}m_{j_{l-2}}\ldots m_{j_{2}}m_{j_{1}}x
\]
is injective. Note this is regardless of whether $x$ has duplicate
entries. Moreover, for any $x$, $x'\in V(\Z_{+})-K_{0},$ the images
of $\phi_{x}$ and $\phi_{x'}$ are disjoint unless either $x'\in\image(\phi_{x})$
or $x\in\image(\phi_{x'})$.
\end{lem}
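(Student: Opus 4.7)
The plan is to exploit two structural facts recorded in Proposition \ref{prop:mhmove-dynamics}: first, each move $m_j$ is an involution (the two roots of the quadratic polynomial in coordinate $j$ from Part \ref{enu:decrease-largest} are interchanged by $m_j$); and second, by Parts \ref{enu:The-largest-entry-unique} and \ref{enu:new-largest}, whenever $(j_1,\ldots,j_l)\in\Sigma^*(x)$ with $l\geq 1$, the tuple $x^{(l)}=m_{j_l}\cdots m_{j_1}(x)$ has a \emph{unique} maximal entry located at coordinate $j_l$, and this maximum strictly exceeds the maximum of $x^{(l-1)}$. In particular the maxima along an admissible orbit strictly increase, so $x^{(l)}\in V(\Z_+)-K_0$ for every $l\geq 0$, and the hypotheses of the proposition remain in force at every step.

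These facts yield a ``reversal procedure'' depending only on the output tuple $y$: if $y=\phi_x(j_1,\ldots,j_l)$ with $l\geq 1$, then $j_l$ is the unique coordinate at which the maximum of $y$ is attained, and $m_{j_l}(y)=x^{(l-1)}$. Provided $l\geq 2$, the intermediate tuple $x^{(l-1)}$ again has a unique maximum at $j_{l-1}$, so the procedure iterates, recovering $j_{l-1},\ldots,j_1$ and the tuples $x^{(l-1)},\ldots,x^{(0)}=x$ in turn. For the injectivity claim this immediately reconstructs the sequence $(j_1,\ldots,j_l)$ from the pair $(x,y)$: the length $l$ is the number of reversal steps needed before the reverse trajectory first hits $x$.

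For the disjointness claim, suppose $y=\phi_x(j_1,\ldots,j_l)=\phi_{x'}(j_1',\ldots,j_{l'}')$, and assume without loss of generality $l\geq l'$. Since the reversal procedure depends only on $y$, the two chains of intermediate tuples obtained by undoing moves from $y$ coincide step by step. After $l'$ reversals one reaches simultaneously $x'$ (reading from the second expression) and $x^{(l-l')}$ (reading from the first), so $x'=x^{(l-l')}=\phi_x(j_1,\ldots,j_{l-l'})$. Any prefix of an admissible sequence is admissible, so $(j_1,\ldots,j_{l-l'})\in\Sigma^*(x)$ and therefore $x'\in\image(\phi_x)$; when $l=l'$ this collapses to $x=x'$, in which case the two sequences agree identically.

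I do not expect a substantive obstacle. The only care required is tracking that at every stage of the reversal the current tuple carries a unique maximum at the correct coordinate, which is precisely what Parts \ref{enu:The-largest-entry-unique}--\ref{enu:new-largest} of Proposition \ref{prop:mhmove-dynamics} guarantee along the admissible sequence. Once the monotonicity of maxima and the involutivity of each $m_j$ are in hand, both assertions reduce to formal bookkeeping on this reverse trajectory.
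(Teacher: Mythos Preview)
Your proof is correct and follows essentially the same approach as the paper's. Both arguments rest on the observation that from any $y=m_j(\tilde x)$ with $j$ admissible for $\tilde x$, the index $j$ is recovered as the unique coordinate of the maximum of $y$ (Proposition~\ref{prop:mhmove-dynamics}, Parts~\ref{enu:The-largest-entry-unique}--\ref{enu:new-largest}) and $\tilde x$ is recovered as $m_j(y)$ by involutivity; the paper phrases this as ``if $m_j(x)=m_{j'}(x')$ then $j=j'$ and $x=x'$'' and peels back one step at a time, while you package it as a deterministic reversal trajectory, but the content is identical.
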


\begin{proof}
It is clear from Proposition \ref{prop:mhmove-dynamics}, Part \ref{enu:new-largest}
that the $m_{j_{1}}x$ with $j_{1}$ admissible are distinct. It is
then enough to show $\phi_{x}$ is injective to show that there are
no $x\neq x'\in V(\Z_{+})-K_{0}$ and $j,j'$ admissible for the respective
$x,x'$ so that $m_{j}(x)=m_{j'}(x').$ But since $m_{j}(x)$ has
a distinct largest entry by Proposition \ref{prop:mhmove-dynamics}
Part \ref{enu:The-largest-entry-unique}, it has to be the case that
$j=j'$. Then applying $m_{j}$ gives $x=x'.$

Now suppose $x'\notin\image(\phi_{x})$ and $x\notin\image(\phi_{x'}).$
If $\image(\phi_{x})\cap\image(\phi_{x'})\neq\emptyset$ then at some
point there must have been $x^{(3)}\neq x^{(4)}\in V(\Z_{+})-K_{0}$
and $j,j'$ admissible for $x^{(3)},x^{(4)}$ respectively so that
$m_{j}(x^{(3)})=m_{j'}(x^{(4)})$. But we have already established
this cannot happen.
\end{proof}

\begin{lem}
\label{lem:bijection between paths}Let $x\in V(\Z_{+})-K_{0}$ and
$z=\order(a^{\frac{1}{n-2}}x)$ the corresponding element of $\M-K.$
There exists a bijection

\[
\Theta_{x}:\Sigma^{*}(x)\to\Lambda
\]
that is an intertwiner for the map $x'\mapsto z(x')=\order(a^{\frac{1}{n-2}}x')$
in the sense that 

\[
\Theta_{x}(j_{1},j_{2},\ldots,j_{l}).z(x)=z(\phi_{x}(j_{1},j_{2},j_{3},\ldots,j_{l}))
\]
for all $(j_{1},\ldots,j_{l})\in\Sigma^{*}(x).$
\end{lem}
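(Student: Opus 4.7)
The plan is to construct $\Theta_x$ recursively, tracking at each stage a permutation $\sigma_i\in S_n$ that sorts $x^{(i)} := m_{j_i}\cdots m_{j_1}(x)$. Since $x^{(i)}$ may have repeated entries, I fix once and for all a deterministic tie-breaking rule (for instance lex on $(x^{(i)}_r,r)$), so that $\sigma_i$ is uniquely determined. Set $\Theta_x(\emptyset)=e$ and let $\sigma_0$ be the sorting permutation of $x$. Inductively, supposing $\Theta_x(j_1,\ldots,j_{i-1}) = \lambda_{k_{i-1}}\cdots\lambda_{k_1}$ and that $\sigma_{i-1}$ sorts $x^{(i-1)}$, define $k_i := \sigma_{i-1}^{-1}(j_i)$ and $\Theta_x(j_1,\ldots,j_i) := \lambda_{k_i}\,\Theta_x(j_1,\ldots,j_{i-1})$. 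By Proposition~\ref{prop:mhmove-dynamics}(2)--(3), the largest coordinate of $x^{(i-1)}$ is unique and sits at sorted position $n$, so admissibility of $(j_1,\ldots,j_i)$ forces $k_i\in\{1,\ldots,n-1\}$, and $\lambda_{k_i}$ is a legitimate generator of $\Lambda$.

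The intertwining identity $\Theta_x(j_1,\ldots,j_i).z(x) = z(x^{(i)})$ then follows from a direct computation with the normalization $z_r = a^{1/(n-2)}x_r$: the last entry of $\lambda_{k_i}(z(x^{(i-1)}))$ is
\[
\prod_{r\neq k_i} z(x^{(i-1)})_r \;-\; z(x^{(i-1)})_{k_i} \;=\; a^{1/(n-2)}\Bigl(a \prod_{s\neq j_i}x^{(i-1)}_s - x^{(i-1)}_{j_i}\Bigr),
\]
after using $a^{(n-1)/(n-2)} = a\cdot a^{1/(n-2)}$; this equals $a^{1/(n-2)} m_{j_i}(x^{(i-1)})_{j_i}$, which by Proposition~\ref{prop:mhmove-dynamics}(3) is the unique maximum of $m_{j_i}(x^{(i-1)})$. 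The other $n-1$ coordinates of $\lambda_{k_i}(z(x^{(i-1)}))$ are the sorted entries of $a^{1/(n-2)}x^{(i-1)}$ with the $j_i$-th removed, so the output is indeed $z(x^{(i)})$. The updated permutation $\sigma_i$ is obtained from $\sigma_{i-1}$ by deleting $j_i$ at position $k_i$ and reappending it at position $n$.

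For bijectivity, injectivity of $\Theta_x$ is immediate from the freeness of $\Lambda$ on the $\lambda_j$ (Lemma~\ref{lem:Lambda-is-free}): equal words force equal $(k_1,\ldots,k_l)$, and since the $\sigma_i$ are determined recursively by $\sigma_0$ and the $k$'s, the relation $j_i = \sigma_{i-1}(k_i)$ forces equal admissible sequences. For surjectivity, any word $\lambda_{k_l}\cdots\lambda_{k_1}\in\Lambda$ lifts via the same recursion to an admissible sequence $j_i := \sigma_{i-1}(k_i)$; admissibility is automatic because $k_i \in \{1,\ldots,n-1\}$ prevents $j_i$ from coinciding with $\sigma_{i-1}(n)$, the index of the unique largest entry of $x^{(i-1)}$.

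The principal subtlety is the possible presence of duplicate entries in some $x^{(i)}$, which a priori leaves the sorting permutation $\sigma_i$ ambiguous and threatens to collapse the map $\Theta_x$ (two different admissible sequences could otherwise plausibly be forced by the intertwining relation onto the same generator, since distinct $\lambda_j$'s may agree pointwise on a tuple with repeats). Imposing the deterministic tie-breaking rule at the outset removes this ambiguity canonically, and the remaining verifications are routine bookkeeping together with the one normalization identity above.
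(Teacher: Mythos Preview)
Your proof is correct and follows essentially the same approach as the paper's: at each step, fix a sorting permutation of the current tuple and use it to translate the admissible index $j_i$ into a generator index $k_i$ for $\Lambda$, verifying the intertwining by a direct computation. The paper's proof is a one-paragraph sketch (``pick an ordering of $x'$ and send $j\mapsto\lambda_j$; repeat''), whereas you have spelled out the recursion, the normalization identity $a^{(n-1)/(n-2)}=a\cdot a^{1/(n-2)}$, the tie-breaking rule for duplicates, and the bijectivity argument via freeness of $\Lambda$---all of which the paper leaves implicit.
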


\begin{proof}
We'll show for all $x'$ there is a one to one correspondence between
the admissible sequences $(j)$ of length $1$ and $\{\lambda_{j}:1\leq j\leq n-2\}$
so that $\Theta_{x}(j).z(x)=z(\phi_{x'}(j)).$ This is clear if $x'_{1}\leq x'_{2}\leq\ldots<x'_{n}$
is ordered (send $j\mapsto\lambda_{j})$. Otherwise pick an ordering
of $x'.$ The general result follows by repeating this process.
\end{proof}

Lemma \ref{lem:injective-walk} implies that the set $V(\Z_{+})$
decomposes into the finite set $K_{0}$ and a finite number of orbits
of the form
\[
\phi_{x^{(0)}}(\Sigma^{*}(x^{(0)})).
\]
Each one of these orbits has either all its points exceptional or unexceptional. Since we assume throughout the rest of the paper that $V(\Z)-\E$ is infinite, it follows that the collection  $\mathcal{U}$ of unexceptional basepoints $x^{(0)}$ is finite and nonempty. Summing up,
\[
V(\Z_+) -\E -K_0 = \coprod_{x^{(0)} \in \mathcal{U} } \phi_{x^{(0)}}(\Sigma^{*}(x^{(0)})),
\]
so 
\begin{eqnarray*}
|(V(\Z_+) -\E) \cap B(R)| & =& O_{n,a,k}(1) + \sum_{x^{(0)} \in \mathcal{U} } \sum_{s \in \Sigma^*(x^{(0)})} \mathbf{1}\left\{  \max( \phi_{x^{(0)}}(s) )\leq R\right\}  \\
&=& O_{n,a,k}(1) + \sum_{x^{(0)} \in \mathcal{U} } \sum_{s \in \Sigma^*(x^{(0)})} \mathbf{1}\left\{  z( \phi_{x^{(0)}}(s) )_n\leq a^\frac{1}{n-2} R\right\} .
\end{eqnarray*}
Applying Lemma \ref{lem:bijection between paths} to the above sum, one obtains
\[
 O_{n,a,k}(1) + \sum_{x^{(0)} \in \mathcal{U} } \sum_{\lambda\in \Lambda} \mathbf{1}\left\{  (\lambda.z(x^{(0)}))_n\leq a^\frac{1}{n-2} R\right\} .
\]
Therefore,
Theorem \ref{thm:main-counting} will follow from asymptotic estimates
for the quantity

\begin{equation}
\sum_{\lambda\in\Lambda}\mathbf{1}\left\{ (\lambda.z^{(0)})_{n}\leq R\right\} \label{eq:count-form}
\end{equation}
where $z^{(0)}\in z(\mathcal{U}) \subset \M-K.$ These estimates are taken up in the next
section. We draw the reader's attention to the fact that the count
is over $\Lambda$ and not over $\M.$

\subsection{Increasing the size of $K$}\label{incK}

Before we begin the count we increase the size of $K.$ Recall that
$K$ and $K_{0}$ are balls with center $0$ in the $\ell^{\infty}$
norm with radii coupled by (\ref{eq:KvsK0}) and that we are free
to increase their radii (maintaining the relationship (\ref{eq:KvsK0})).
The following can be thought of as regularizing the dynamics of $\M$
at a fixed scale depending on $n,a,k.$ We state our requirements
in terms of $z=(z_{1},\ldots,z_{n}).$ 

First we make sure $z_{n-1}$ is reasonably large compared to $z_{n}.$
Suppose $z_{n-1}\leq cz_{n}^{\frac{1}{n-1}}$. Then $z_{1}\leq z_{2}\leq\ldots\leq z_{n-1}\leq cz_{n}^{\frac{1}{n-1}}.$
Then (\ref{eq:markoff-hurwitz-normalized}) gives

\[
z_{n}^{2}\leq c^{n-1}z_{n}^{2}+k'
\]
 which is a contradiction for $c<1$ and $z_{n}$ large enough depending
on $k'$. We increase the radius of $K$ so that 
\begin{equation}
z_{n-1}\geq\frac{1}{2}z_{n}^{\frac{1}{n-1}}\label{eq:zn-1-not-too-small}
\end{equation}
 for all $z\in\M-K.$

Now we make sure $z_{n}$ is large enough so certain inequalities
hold. Note that

\begin{equation}
\frac{(n-1)\log(1-2z_{n}^{-1/(n-1)})-(n-1)\log2}{\log z_{n}}\label{eq:function-of-z_n}
\end{equation}
tends to $0$ as $z_{n}\to\infty.$ So we increase the radius of $K$
so that 
\begin{equation}
\eqref{eq:function-of-z_n}\geq-1/2\label{eq:function-of-zn-lower bound}
\end{equation}
for all $z\in\M-K.$ It will also be convenient for the sake of simplifying
arguments to assume that 
\begin{equation}
z_{n}\geq10\label{eq:ten}
\end{equation}
for all $z\in\M-K.$ 
Furthermore by increasing the radius of $K$, using  \eqref{eq:zn-1-not-too-small} we can also ensure
\begin{equation}
z_{n-1}> 2 \label{eq:2lowerbound}
\end{equation}
and 
\begin{equation}
z_{1}^{2}+\ldots+z_{n-1}^{2}-k'\geq0\label{eq:C(z)-minus-kdash-nonnegative}
\end{equation}
for $z \in \M - K$.

\section{Converting the linear count to the nonlinear count}

\label{sec:converting-linear-to-nonlinear}

\subsection{Acceleration }

\label{sub:acceleration}

In the last Section \ref{sec:MH-tuples-and-moves} we reduced our
Main Theorem \ref{thm:main-counting} to obtaining an asymptotic for
the count

\begin{equation}
\sum_{\lambda\in\Lambda}\mathbf{1}\left\{ (\lambda.z^{(0)})_{n}\leq R\right\} \label{eq:archimedeanball}
\end{equation}
where $z^{(0)}$
 is one of a finite set of unexceptional points in
$\M-K$. For the rest of the paper we view $z^{(0)}$ as fixed.

There is a general framework in which to count over the tree-like
$\Lambda,$ called the \emph{renewal method. }This was first used
in counting by Lalley \cite{Lalley88} in the setting of self-similar fractals and subsequently 
extended by him \cite{LALLEY} to the setting of Schottky groups. The essence of the method is a recursion over $\Lambda.$ Our departure
from other uses of renewal in counting problems is that we perform
what we call \emph{acceleration. }Concretely, we replace the generators
$\{\lambda_{j}:1\leq j\leq n\}$ of $\Lambda$ with the countably
infinite set of generators
\[
S=S_{\Lambda}=\left\{ \lambda_{n-1}^{A}\lambda_{j}\::\:A\in\Z_{\geq0},\:1\leq j\leq n-2\:\right\} .
\]
It is easy to see that $S_{\Lambda}$ are free generators for the
subsemigroup 
\[
\Lambda'=\cup_{j=1}^{n-2}\Lambda.\lambda_{j}\subset\Lambda
\]
that contains the words beginning with $\lambda_{j}$, $1\leq j\leq n-2.$
This acceleration is crucial for our method and has two advantages:
\begin{enumerate}
\item The quality of our fitting the nonlinear count for $\Lambda$ to a
linear count to $\Gamma$ depends on the size of the quantity
\[
\alpha(z)=\prod_{j=1}^{n-2}z_{j},
\]
cf. Lemma \ref{lem:Log-vs-f} below. This quantity can be small for
long words with respect to the generators $\{\lambda_{j}\}$, because
$\lambda_{n-1}$ does not alter $\alpha(z).$ On the other hand, we
prove in Lemma \ref{lem:growth-of-alpha-in-L} that $\alpha(z)$ grows
doubly exponentially in the word length with respect to the generators
$S_{\Lambda}$.
\item When we eventually arrive at the dynamics of $\G'$ on $P(\R_{\geq0}^{n}),$
the unaccelerated system would be non-uniformly contracting and therefore
we could not expect there to be a finite invariant measure for this
system. On the other hand, the acceleration we perform leads to uniformly
contracting dynamics (cf. Proposition \ref{prop:uniformly-contracting})
and in turn to a nice description of the invariant measure and leading
eigenfunction for the transfer operator in the Ruelle-Perron-Frobenius
Theorem (Theorem \ref{thm:RPF}).
\end{enumerate}
Now, the orbit $\Lambda.z^{(0)}$ breaks up into the countable union
of orbits

\begin{equation}
\Lambda.z^{(0)}=\bigcup_{A_{0}=0}^{\infty}\Lambda'.\lambda_{n-1}^{A_{0}}z^{(0)}.\label{eq:breaking-up-Lambda}
\end{equation}
It is clear that an asymptotic formula for (\ref{eq:archimedeanball})
is equivalent to an asymptotic formula for

\begin{equation}
M_{0}(z,a)\equiv\sum_{\lambda\in\Lambda\cup\{e\}}\mathbf{1}\{\log\log(\lambda.z)_{n}-\log\log z_{n}\leq a\}\label{eq:M0defn}
\end{equation}
when $z=z^{(0)}.$ On the other hand, our methods can prove an asymptotic
formula for the following quantity

\begin{equation}
M(z,a)=\sum_{\lambda\in\Lambda'\cup\{e\}}\mathbf{1}\{\log\log(\lambda.z)_{n}-\log\log z_{n}\leq a\}\label{eq:M-definition}
\end{equation}
for arbitrary unexceptional $z\in\M-K.$ Precisely, we will obtain
the following proposition.
\begin{prop}
\label{prop:orbital-counting-Lambda'-main-prop}For all unexceptional
$z\in\M-K$ there is a positive constant $c_{\star}$ such that as
$a\to\infty,$

\[
M(z,a)=e^{\beta a}(c_{\star}(z)+o(1)),
\]
where $\beta>1$ is the constant from Theorem \ref{thm:Baragar-theorem}
and the rate of decay in the small $o$ does not depend on $z.$ Moreover,
the $c_{\star}(z)$ have a uniform bound depending only on $n.$
\end{prop}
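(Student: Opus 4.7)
The plan is to reduce Proposition \ref{prop:orbital-counting-Lambda'-main-prop} to the linear count in Theorem \ref{thm:-linear-counting} via the canonical semigroup isomorphism $\Lambda' \to \Gamma'$ that sends the accelerated generator $\lambda_{n-1}^{A}\lambda_{j}$ to $\gamma_{n-1}^{A}\gamma_{j}$. Setting $y = \log z$ componentwise, the counting condition $\log\log(\lambda.z)_n - \log\log z_n \leq a$ appearing in $M(z,a)$ is formally identical to the condition $\log(\gamma.y)_n - \log y_n \leq a$ counted by $N(y,a)$, provided we can replace $\log(\lambda.z)$ by $\gamma.y$. The natural candidate for the main constant is then $c_\star(z) = h(\log z)$, where $h$ is the function of Theorem \ref{thm:-linear-counting} extended to the initial point $\log z$ via the recursion \eqref{eq:h-recursion} (the images of $\log z$ under the generators $T_\Gamma$ land on $\H$ even when $\log z$ itself does not, so this extension is well posed). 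Uniform boundedness of $c_\star$ follows immediately from the boundedness of $h$.

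The technical core is a uniform comparison between $\log(\lambda.z)$ and $\gamma.(\log z)$. A single step $z' = \lambda_j(z)$ with $j \leq n-2$ gives
$$\log z'_n = \sum_{i\neq j}\log z_i + \log\!\Bigl(1 - \frac{z_j}{\prod_{i\neq j}z_i}\Bigr),$$
whose remainder is $O(1/\alpha(z))$ for $\alpha(z)=\prod_{j=1}^{n-2}z_j$. Iterating along a word $\lambda = \sigma_k\cdots\sigma_1\in\Lambda'$ with $\sigma_i\in S_\Lambda$, I would invoke Lemma \ref{lem:Log-vs-f} to bound the compounded error by $\sum_i O(1/\alpha(z^{(i)}))$, where $z^{(i)}$ denotes the partial orbit. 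By Lemma \ref{lem:growth-of-alpha-in-L} the quantities $\alpha(z^{(i)})$ grow doubly exponentially in $i$ with respect to $S_\Lambda$, so this series telescopes to $O(\alpha(z)^{-1})$ \emph{uniformly in $\lambda \in \Lambda'$}. This is precisely the payoff of the acceleration.

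With the uniform comparison in hand, the nonlinear counting condition rewrites as $\log(\gamma.y)_n - \log y_n \leq a + \eta(z,a,\lambda)$ with $|\eta| = O\bigl(\alpha(z)^{-1}(\gamma.y)_n^{-1}\bigr)$ uniformly in $\lambda$. One may therefore sandwich $M(z,a)$ between $N(\log z, a - |\eta_+|)$ and $N(\log z, a + |\eta_+|)$ for a $z$-dependent but $\lambda$-uniform $\eta_+$, apply Theorem \ref{thm:-linear-counting} at both ends, and extract $M(z,a) = h(\log z)e^{\beta a}(1+o_{a\to\infty}(1))$; the rate of decay of the small $o$ is inherited from Theorem \ref{thm:-linear-counting} combined with the bound $\alpha(z)^{-1}\ll 1$ valid on $\M-K$ after enlarging $K$ as in Section \ref{incK}, so the rate is uniform in $z$. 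The principal obstacle is precisely this uniform control of the error along arbitrarily long words: the unaccelerated generator $\lambda_{n-1}$ leaves $\alpha$ invariant, so without the acceleration the compounded error would fail to be summable and the sandwich argument would collapse. Verifying positivity of $c_\star(z)=h(\log z)$ requires only the positivity of $h$ supplied by Theorem \ref{thm:-linear-counting}.
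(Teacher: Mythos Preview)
There is a genuine gap in the sandwich step. Your error $\eta_+$ depends only on $z$ (through $\alpha(z)$ and $\log z_n$) and \emph{not} on $a$. Consequently, applying Theorem \ref{thm:-linear-counting} to $N(\log z, a\pm\eta_+)$ yields only
\[
h(\log z)\,e^{-\beta\eta_+}\;\leq\;\liminf_{a\to\infty}\frac{M(z,a)}{e^{\beta a}}
\;\leq\;\limsup_{a\to\infty}\frac{M(z,a)}{e^{\beta a}}\;\leq\;h(\log z)\,e^{\beta\eta_+},
\]
which does not establish that the limit exists. For a generic unexceptional $z\in\M-K$ one has $\alpha(z)$ as small as $3$, so $\eta_+$ is of order one and the two bounds differ by a genuine multiplicative constant. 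Relatedly, your claim that $\sum_i O(1/\alpha(z^{(i)}))$ telescopes to $O(\alpha(z)^{-1})$ is not what Lemma \ref{lem:growth-of-alpha-in-L} gives: it bounds $\alpha(z^{(i)})$ from below by a quantity depending on $i$ alone, not on $\alpha(z)$, so the tail of the series is a universal constant rather than $O(\alpha(z)^{-1})$. The identification $c_\star(z)=h(\log z)$ is therefore also unjustified; in fact it is false, since the nonlinear distortions along the orbit produce genuine corrections to the leading constant.

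The paper's argument is organised precisely to escape this difficulty. One first iterates the renewal equation \eqref{eq:M-renewal} a number $L=L(a)\asymp a/\log a$ of times, so that every surviving base point $\lambda' z^{(0)}$ has $\alpha(\lambda' z^{(0)})\geq \tfrac12\exp(C\phi^{L})$ by Lemma \ref{lem:growth-of-alpha-in-L}; now the sandwich error $\epsilon=\epsilon(a)$ in Lemma \ref{lem:sandwich-N-M} \emph{does} tend to zero as $a\to\infty$, and Theorem \ref{thm:-linear-counting} converts each $M(\lambda' z^{(0)},\cdot)$ into $h(f(\lambda' z^{(0)}))e^{\beta(\cdot)}(1+o(1))$. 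This leaves a sum
\[
a_L(z)\;=\;\sum_{\lambda':\,s^L(\lambda')=\lambda}h\bigl(f(\lambda' z^{(0)})\bigr)\,e^{-\beta\tau_\star^L(\lambda')},
\]
and a separate argument (Proposition \ref{prop:Cauchy}) is needed to show that $a_L(z)$ is Cauchy in $L$, with limit $c_\star(z)$. The Cauchy estimate uses the recursion \eqref{eq:h-recursion} for $h$ together with the doubly exponential growth of $\alpha$, and it is here that the nonlinear corrections to $h(f(z))$ are absorbed. Your outline collapses these two independent limits (in $a$ and in the depth $L$) into a single step, and that is where it breaks.
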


The proof of Proposition \ref{prop:orbital-counting-Lambda'-main-prop}
will occupy the rest of this Section. Before beginning, we show how
Proposition \ref{prop:orbital-counting-Lambda'-main-prop} implies
our main Theorem \ref{thm:main-counting}. This passage relies on
the following elementary lemma.
\begin{lem}
\label{lem:lambda-n-1-growth}For unexceptional $z\in\M-K$ we have

\[
(\lambda_{n-1}^{A}z)_{n}\geq2^{A}z_{n}.
\]
\end{lem}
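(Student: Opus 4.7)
The plan is to reduce Lemma \ref{lem:lambda-n-1-growth} to the single-step estimate $(\lambda_{n-1}z)_n \geq 2 z_n$ and then iterate. Using the explicit formula
\[
(\lambda_{n-1}z)_n = z_1 z_2 \cdots z_{n-2}\, z_n - z_{n-1}
\]
together with the ordering $z_{n-1}\leq z_n$, it suffices to establish the auxiliary bound $z_1 z_2 \cdots z_{n-2} \geq 3$ for every unexceptional $z\in\M-K$, possibly after enlarging the radius of $K$; granted this, $(\lambda_{n-1}z)_n \geq 3z_n - z_{n-1} \geq 2z_n$.

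To prove the auxiliary bound, write $z = a^{1/(n-2)}x$ with $x\in V(\Z_+)$ ordered and unexceptional, so that $z_1 \cdots z_{n-2} = a \cdot x_1 \cdots x_{n-2}$, and split on $a$. For $a\geq 3$ the bound is immediate. For $a=2$, the possibility $x_1\cdots x_{n-2}=1$ forces $x_1=\cdots=x_{n-2}=1$, which places $x$ in the fundamental exceptional family of Definition \ref{def:exceptional} case 2, contradicting unexceptionality; hence $x_1\cdots x_{n-2}\geq 2$ and $z_1\cdots z_{n-2}\geq 4$. For $a=1$, the possibility $x_1\cdots x_{n-2}=1$ reduces the Markoff-Hurwitz equation to a positive definite quadratic form in $(x_{n-1},x_n)$, exactly as in the proof of Proposition \ref{prop:mhmove-dynamics} Part \ref{enu:new-largest}, yielding only finitely many solutions that are absorbed into $K_0$; the remaining obstruction $x_1\cdots x_{n-2}=2$ forces the ordered form $x_1=\cdots=x_{n-3}=1,\ x_{n-2}=2$, which is fundamental exceptional of type 1 and is again excluded. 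In every case, $z_1\cdots z_{n-2}\geq 3$ outside a suitably enlarged $K$.

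The induction step is routine. Assuming $(\lambda_{n-1}^{A-1}z)_n \geq 2^{A-1}z_n$, set $z' = \lambda_{n-1}^{A-1}z$. The map $\lambda_{n-1}$ preserves $\M-K$, by Part \ref{enu:new-largest} of Proposition \ref{prop:mhmove-dynamics} together with the normalization (the new $n$-th coordinate is strictly the largest entry, which exceeds any fixed radius of $K$), and it preserves unexceptionality since it agrees up to reordering of coordinates with $m_{n-1}\in\mathcal{G}\subset\Aut(V)$. Therefore the base case applies to $z'$, giving $(\lambda_{n-1}^A z)_n = (\lambda_{n-1}z')_n\geq 2z'_n\geq 2^A z_n$.

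The main obstacle is securing the \emph{sharp} auxiliary bound $z_1\cdots z_{n-2}\geq 3$, which is precisely what makes the contraction factor $2$ available; here the argument uses essentially the classification of fundamental exceptional solutions in Definition \ref{def:exceptional}, which is tailored to exclude exactly the borderline configurations $z_1 \cdots z_{n-2}\in\{1,2\}$. Everything else is elementary.
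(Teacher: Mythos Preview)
Your proof is correct and follows essentially the same route as the paper: both arguments hinge on the auxiliary bound $\alpha(z)=z_1\cdots z_{n-2}\geq 3$ for unexceptional $z\in\M-K$, established by the same case split on $a$ and appeal to the exceptional classification. The only cosmetic difference is that the paper observes $\alpha$ is invariant under $\lambda_{n-1}$ and writes the two-term recurrence $Z_{A+1}=\alpha(z)Z_A-Z_{A-1}\geq 2Z_A$ once, whereas you iterate the one-step bound and re-invoke the hypothesis at each stage via $\lambda_{n-1}(\M-K)\subset\M-K$; both are fine.
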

\begin{proof}
One can calculate easily that for $z=(z_{1},\ldots,z_{n}),$ $\lambda_{n-1}^{A}z$
is obtained by $A$ applications of the matrix

\[
g_{\alpha(z)}=\left(\begin{array}{cc}
0 & 1\\
-1 & \alpha(z)
\end{array}\right)
\]
to the last two entries of $z,$ where $\alpha(z)=\prod_{j\leq n-2}z_{j}.$
This quantity will appear repeatedly in the rest of the paper. If
$z=z(x)$ with $x_{1}\leq x_{2}\leq\ldots\leq x_{n}$ then 

\[
\alpha(z)=ax_{1}x_{2}\ldots x_{n-2}\in\Z_{+}.
\]
If $\alpha(z)=1$ then this matrix is torsion and this contradicts
the maximal entries of $\lambda_{n-1}^{A}z$ growing with $A$ (since
$z\in\M-K)$. If $\alpha(z)=2$ then $z$ must be an exceptional solution.
Otherwise $\alpha(z)\geq3$ and if we let 
$Z_{A}=(\lambda_{n-1}^{A}z)_n$
then the $Z_{A}$ satisfy the recurrence

\[
Z_{A+1}=\alpha(z)Z_{A}-Z_{A-1}\geq2Z_{A}.
\]
Therefore $(\lambda_{n-1}^{A}z)_{n}\geq2^{A}z_{n}.$
\end{proof}

\begin{proof}[Proof of Theorem \ref{thm:main-counting} given Proposition \ref{prop:orbital-counting-Lambda'-main-prop}]

By our previous discussion it suffices to prove an asymptotic formula
for $M_{0}(z^{(0)},a)$ for a fixed $z^{(0)}.$ But using (\ref{eq:breaking-up-Lambda})
gives

\begin{equation}
M_{0}(z^{(0)},a)=\sum_{A_{0}=1}^{\infty}M(\lambda_{n-1}^{A_{0}}z^{(0)},a-\log\log(\lambda_{n-1}^{A_{0}}z^{(0)})_{n}+\log\log z_{n}^{(0)}).\label{eq:tempsum}
\end{equation}
By using Lemma \ref{lem:lambda-n-1-growth}, the value $A_{0}=A_{\max}$
where $a-\log\log(\lambda_{n-1}^{A_{0}}z^{(0)})_{n}+\log\log z_{n}^{(0)}$
first becomes negative is bounded by 
\[
A_{\max}\leq\frac{\log z_{n}^{(0)}e^{a}}{\log2}.
\]
Let the small $o$ term in Proposition \ref{prop:orbital-counting-Lambda'-main-prop}
be bounded in absolute value by a positive function $F(a)$ that tends to $0$
as $a\to\infty$. Let $\kappa$ be a small positive constant
to be chosen. The $A_{0}$ such that $a-\log\log(\lambda_{n-1}^{A_{0}}z^{(0)})_{n}+\log\log z_{n}^{(0)}\geq\kappa a$
contribute 

\[
\log z_{n}^{(0)}e^{\beta a}\sum_{A_{0}:a-\log\log(\lambda_{n-1}^{A_{0}}z^{(0)})_{n}+\log\log z_{n}^{(0)})\geq\kappa a}\frac{c_{\star}(\lambda_{n-1}^{A_{0}}z^{(0)})}{(\log(\lambda_{n-1}^{A_{0}}z^{(0)})_{n})^{\beta}}(1+O(\sup_{a'\geq\kappa a}F(a)).
\]
to (\ref{eq:tempsum}) by Proposition \ref{prop:orbital-counting-Lambda'-main-prop}.
Furthermore, by Lemma \ref{lem:lambda-n-1-growth}, 

\[
\sum_{A_{0}:a-\log\log(\lambda_{n-1}^{A_{0}}z^{(0)})_{n}+\log\log z_{n}^{(0)})\geq\kappa a}\frac{c_{\star}(\lambda_{n-1}^{A_{0}}z^{(0)})}{(\log(\lambda_{n-1}^{A_{0}}z^{(0)})_{n})^{\beta}}\leq\sum_{A_{0}}\frac{c_{\star}(\lambda_{n-1}^{A_{0}}z^{(0)})}{(A_{0}\log2)^{\beta}}
\]
 converges to some limit $c_{\infty}(z^{(0)})$ as $a\to\infty,$
using $\beta>1.$ Therefore the terms we have discussed so far give
a contribution of

\[
\log z_{n}^{(0)}c_{\infty}(z^{(0)})e^{\beta a}(1+o(1))
\]
 to $M_{0}(z^{(0)},a)$ via (\ref{eq:tempsum}).

For the remaining $A_{0}$ such that $a-\log\log(\lambda_{n-1}^{A_{0}}z^{(0)})_{n}+\log\log z_{n}^{(0)}<\kappa a$
we use Proposition \ref{prop:orbital-counting-Lambda'-main-prop}
in a coarser way to get $M(z,a)\leq Ce^{\beta a}$ for some constant
$C,$ uniformly over unexceptional $z\in\M-K.$ Then any remaining
$A_{0}$ contributes at most $Ce^{\beta\kappa a}$ to (\ref{eq:tempsum}).
Therefore the remaining contributions are in total at most 

\[
A_{\max}Ce^{\beta\kappa a}\leq\frac{\log z_{n}^{(0)}Ce^{(1+\beta\kappa)a}}{\log2}
\]
which is negligible when $1+\beta\kappa<\beta,$ and we can find such
a $\kappa$ since $\beta>1.$
\end{proof}

\subsection{The renewal equation for $M$}

We now take up the proof of Proposition \ref{prop:orbital-counting-Lambda'-main-prop}.
While the statement of Proposition  \ref{prop:orbital-counting-Lambda'-main-prop} is uniform over all unexceptional $z\in \M- K$, our previous arguments show that the unexceptional elements of $\M-K$ break up into finitely many orbits of $\Lambda$. Therefore it is sufficient for us to establish Proposition  \ref{prop:orbital-counting-Lambda'-main-prop} for $z=\lambda_{0}z^{(0)}$, where $z^{(0)} \in z(\mathcal{U})$ is a fixed unexceptional basepoint and $\lambda_0$ is an arbitrary element of $\Lambda$. We therefore view $z^{(0)}$ as fixed from now on, and we will prove Proposition \ref{prop:orbital-counting-Lambda'-main-prop} for $z=\lambda_{0}z^{(0)}$, with uniformity over $\lambda_0\in \Lambda$.

We now describe the renewal equation, for which we need some new concepts.
Define the \emph{shift $s:\Lambda'\to\Lambda'\cup\{e\}$ }by

\[
s(\lambda_{n-1}^{A_{l}}\lambda_{j_{l}}\lambda_{n-1}^{A_{l-1}}\lambda_{j_{l-1}}\ldots\lambda_{n-1}^{A_{2}}\lambda_{j_{2}}\lambda_{n-1}^{A_{1}}\lambda_{j_{1}})\equiv\lambda_{n-1}^{A_{l-1}}\lambda_{j_{l-1}}\ldots\lambda_{n-1}^{A_{2}}\lambda_{j_{2}}\lambda_{n-1}^{A_{1}}\lambda_{j_{1}}.
\]
Now extend this definition so that $s(\lambda\lambda_{0})=s(\lambda)\lambda_{0}$
for all $\lambda\in\Lambda'$\emph{ }and\emph{ }$\lambda_{0}\in\Lambda\cup\{e\}.$
We define the \emph{distortion function $\tau_{\star}:\Lambda'.(\Lambda\cup\{e\})\to\R_{\geq0}$
}by

\[
\tau_{\star}(\lambda)\equiv\log\log(\lambda.z^{(0)})_{n}-\log\log(s(\lambda).z^{(0)})_{n}.
\]
This depends on the constant $z^{(0)}.$ One also has the iterated
version of distortion

\begin{equation}
\tau_{\star}^{N}(\lambda)=\sum_{p=0}^{N-1}\tau_{\star}(s^{p}(\lambda))=\log\log(\lambda.z^{(0)})_{n}-\log\log(s^{N}(\lambda).z^{(0)})_{n}.\label{eq:tau-star-iterated}
\end{equation}
for any $\lambda\in s^{-N}(\Lambda).$ The \emph{renewal equation
}for $M$ is then

\begin{equation}
M(\lambda z^{(0)},a)=\sum_{\lambda'\in S_{\Lambda}}M(\lambda'\lambda z^{(0)},a-\tau_{\star}(\lambda'\lambda))+\mathbf{1}\{0\leq a\}\label{eq:M-renewal}
\end{equation}
for all $\lambda\in\Lambda.$ Note that the summation above is finite
since the $\lambda'$ act to strictly increase maximal entries in
$\M$.

\subsection{Iteration}

The eventual goal is to compare the asymptotics of $M(\lambda z^{(0)},a)$
to those of an analogous quantity for the linear semigroup $\Gamma$
introduced in the Introduction. Before this happens, a regularization
must occur. In our approach\footnote{In Zagier's approach in \cite{ZAGIER} for the case $n=a=3$, there
is a special mapping arising from the close connection between the
Markoff equation and hyperbolic geometry. This mapping offers a much
better fit to the linear semigroup count than is available in general.
See footnote \ref{fn:approximation-quality-footnote} for more on
this.}, the quality of the comparison to the linear semigroup depends on
the size of 

\[
\alpha(z_{1},\ldots,z_{n})=\prod_{j\leq n-2}z_{j}.
\]
It is clear that no $\lambda\in\Lambda$ decreases $\alpha(z).$ To
pass to the case that $\alpha(\lambda'.z^{(0)})$ is large, we iterate
the renewal equation (\ref{eq:M-renewal}) $L$ times. This yields

\begin{equation}
M(\lambda z^{(0)},a)=\sum_{\lambda':s^{L}(\lambda')=\lambda}M(\lambda'z^{(0)},a-\tau_{\star}^{L}(\lambda'))+\sum_{l=1}^{L-1}\sum_{\lambda':s^{l}(\lambda')=\lambda}\mathbf{1}\left\{ \tau_{\star}^{l}(\lambda')\leq a\right\} +\mathbf{1}\left\{ 0\leq a\right\} ,\label{eq:M-iterated-renewal}
\end{equation}
recalling the definition of $\tau_{\star}^{L}$  from (\ref{eq:tau-star-iterated}).
We now show that for suitable $L$ the last two summations in (\ref{eq:M-iterated-renewal})
are negligible. The following lemma is used at several points in the
rest of the paper.
\begin{lem}
\label{lem:iterated-error-term-small}There are constants $c_{0}$
and $c_{1}$ depending only on $n$ such that for all $L\in\N,$ $x\geq0$
\begin{equation}
\sum_{\lambda':s^{L}(\lambda')=\lambda}\mathbf{1}\left\{ \:\tau_{\star}^{L}(\lambda')\leq x\:\right\} \leq c_{1}^{L}(c_{0}+x)^{L}e^{x}.\label{eq:combinatorial-bound-for-tau-star-leq-x}
\end{equation}
As a consequence, for any $\delta>0$, there is $c=c(\delta)>0$ such
that when $L=\Big\lceil\frac{ca}{\log a}\Big\rceil$ one has

\begin{equation}
\sum_{l=1}^{L-1}\sum_{\lambda':s^{l}(\lambda')=\lambda}\mathbf{1}\left\{ \tau_{\star}^{l}(\lambda')\leq a\right\} =O(e^{(1+\delta)a})\label{eq:iterated-tail-bound}
\end{equation}
and
\begin{equation}
c_{1}^{L}(c_{0}+x)^{L}\leq e^{\delta x}\label{eq:bound-for-IBP}
\end{equation}
for all $x\geq a/2.$\end{lem}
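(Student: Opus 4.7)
The proof decomposes into a one-step combinatorial estimate, an iteration via Laplace/Chebyshev, and routine consequences.

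\textbf{Step 1 (single-atom bound).} The heart of the argument is to show that, uniformly over $z = \mu.z^{(0)}$ with $\mu \in \Lambda$ (so $z\in\M-K$ unexceptional), one has
\[
\#\{\lambda_0 \in S_\Lambda : \tau_\star(\lambda_0 \mu) \le \tau\} \le C(n) e^{\tau}.
\]
To see this, fix an atom $\lambda_0 = \lambda_{n-1}^A \lambda_j$ with $j \le n-2$, write $Y = \log z_n$, and track the growth of $\log(\cdot)_n$. Applying $\lambda_j$ produces a state $w$ whose last coordinate satisfies $\log w_n \ge Y + \log z_{n-1}$; by \eqref{eq:zn-1-not-too-small} this gives $\log w_n \ge Yn/(n-1) - O(1)$. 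Moreover $\alpha(w) = \alpha(z) z_{n-1}/z_j \ge z_{n-1} \ge \tfrac12 e^{Y/(n-1)}$, and $\alpha$ is preserved by $\lambda_{n-1}$. Iterating the Chebyshev-like recurrence $U_{A+1} = \alpha(w) U_A - U_{A-1}$ then yields $\log(\lambda_{n-1}^A w)_n \ge \log w_n + A\log\alpha(w) - O(A)$. Combining,
\[
\frac{\log(\lambda_0\mu z^{(0)})_n}{Y} \ge \frac{n+A}{n-1} - O\!\left(\tfrac{1}{Y}\right),
\]
so $\tau_\star(\lambda_0\mu) \ge \log\!\bigl(\tfrac{n+A}{n-1}\bigr) - O(1/Y)$. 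After enlarging $K$ so that $Y$ exceeds a constant depending only on $n$, the condition $\tau_\star \le \tau$ forces $A \le (n-1)e^\tau$, and multiplying by the $(n-2)$ choices of $j$ gives the claim.

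\textbf{Step 2 (iteration).} Let $F_\mu$ denote the counting measure on $\{\tau_\star(\lambda_0 \mu) : \lambda_0 \in S_\Lambda\}$. Step 1 says $F_\mu([0,\tau]) \le Ce^\tau$, uniformly over admissible $\mu$. Writing out
\[
\sum_{\lambda':s^L(\lambda')=\lambda}\mathbf{1}\{\tau_\star^L(\lambda')\le x\}
\]
as an $L$-fold iterated integral against the measures $F_{z_l}$ along the intermediate orbit and taking Laplace transforms at $s>1$, the single-step bound gives $\int_0^\infty e^{-s\tau}\,dF_\mu(\tau) \le C/(s-1)$, hence
\[
\int_0^\infty e^{-sx}\,dN_L(x) \le \left(\frac{C}{s-1}\right)^L.
\]
Chebyshev's inequality then yields $N_L(x) \le e^{sx}(C/(s-1))^L$, and optimizing with $s = 1 + L/x$ produces $N_L(x) \le (Cex/L)^L e^x \le (Cex)^L e^x$. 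Choosing $c_1 = Ce$ and $c_0 \ge 1$ gives \eqref{eq:combinatorial-bound-for-tau-star-leq-x}.

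\textbf{Step 3 (consequences).} For \eqref{eq:iterated-tail-bound} and \eqref{eq:bound-for-IBP} I set $L = \lceil ca/\log a\rceil$ and pick $c = c(\delta)$ small. Then $L\log(c_0+a) = (1+o(1))ca$, so $c_1^L(c_0+a)^L = e^{(c+o(1))a}$; summing the bound \eqref{eq:combinatorial-bound-for-tau-star-leq-x} over $l = 1,\dots,L-1$ costs an extra factor of $L = o(e^{\delta a})$, so choosing $c<\delta$ gives \eqref{eq:iterated-tail-bound}. For \eqref{eq:bound-for-IBP}, one checks $L\log c_1 + L\log(c_0+x) \le \delta x$ for $x \ge a/2$ by splitting the range $a/2 \le x \le 2a$ (where $\log(c_0+x) \asymp \log a$, so the inequality reduces to $c < \delta/4$) from $x > 2a$ (where $\log x / x$ is small and the bound is immediate).

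\textbf{Main obstacle.} Everything hinges on Step 1: obtaining a bound of the form $\#\{\text{atoms} : \tau_\star \le \tau\} \le C(n) e^\tau$ \emph{uniformly in the basepoint} $\mu z^{(0)}$. The naive estimate $A \le Y(e^\tau-1)/\log 2$ depends on $Y$; the $Y$-independent bound only materializes because the $\lambda_j$-step and each subsequent $\lambda_{n-1}$-step each contribute a multiplicative factor $\gtrsim e^{Y/(n-1)}$ to $(\cdot)_n$, so that the $Y$'s cancel inside $\log\log$. This is precisely the point where the geometric input \eqref{eq:zn-1-not-too-small} from Section~\ref{incK} is essential.
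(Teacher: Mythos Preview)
Your proof is correct and the overall architecture---single-atom lower bound, iterate, read off consequences---matches the paper. The genuine difference is in Step~2. The paper, having established $\tau_\star(\lambda_{n-1}^A\lambda_j\tilde\lambda)\geq\log\bigl(1+\tfrac{A}{2(n-1)}\bigr)$, reduces the count to the purely combinatorial quantity
\[
\bigl|\{(A_1,\ldots,A_L)\in\Z_{\geq0}^L:\textstyle\sum_q\log(1+A_q/2(n-1))\leq x\}\bigr|
\]
and bounds this by a direct induction on $L$, peeling off one $A_q$ at a time. You instead bound the single-step Laplace transform uniformly in the basepoint and use the tree structure to get $\sum_{\lambda'}e^{-s\tau_\star^L(\lambda')}\leq G(s)^L$, then apply Chebyshev and optimise over $s$. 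Both arguments are short; yours is the standard Chernoff-type packaging and transfers verbatim to any setting where one has a uniform exponential tail on the single increment, while the paper's induction is more hands-on but avoids any analysis.

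Two small imprecisions worth tightening. First, in Step~1 your display $\tfrac{\log(\lambda_0\mu z^{(0)})_n}{Y}\geq\tfrac{n+A}{n-1}-O(1/Y)$ is not quite right: tracking the constants gives an error $O((1+A)/Y)$, because each of the $A$ applications of $\lambda_{n-1}$ loses an additive constant. This does not matter, since $(1+A)/Y\leq\tfrac{1}{2}\cdot\tfrac{n+A}{n-1}$ once $Y$ exceeds a constant depending only on $n$, so you still get $\tau_\star\geq\log\tfrac{n+A}{2(n-1)}$ and hence the claimed $A\ll e^\tau$. Second, in Step~2 the bound $F_\mu([0,\tau])\leq Ce^\tau$ yields $\int e^{-s\tau}\,dF_\mu\leq Cs/(s-1)$ via integration by parts, not $C/(s-1)$; the extra factor $s^L=(1+L/x)^L\leq e^L$ at the optimisation step is harmless and gets absorbed into $c_1$. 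Alternatively, summing $\sum_A(1+A/c)^{-s}\leq 1+c/(s-1)$ directly from the Step~1 lower bound avoids this factor.
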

\begin{proof}
For the first part of this proof, let $\tilde{\lambda}$ denote an arbitrary element of $\Lambda'$, and $z := \tilde{\lambda}.z^{(0)}$.
The proof of Lemma \ref{lem:lambda-n-1-growth} can be easily adapted
to show that  for arbitrary unexceptional $z' \in\M-K$ 
\[
(\lambda_{n-1}^{A}z')_{n}\geq(\alpha(z')-1)^{A}z'_{n}.
\]
This gives,  setting $z' = \lambda_j z$ 
\begin{eqnarray*}
\tau_{\star}(\lambda_{n-1}^{A}\lambda_{j}\tilde{\lambda}) & = & \log\log(\lambda_{n-1}^{A}\lambda_{j}z)_{n}-\log\log z_{n}\\
 & \geq & \log\log((\alpha(\lambda_{j}(z))-1)^{A}(\lambda_{j}z)_{n})-\log\log z_{n}.
\end{eqnarray*}
Now,
\begin{eqnarray*}
\alpha(\lambda_{j}(z)) & = & \prod_{1\leq i\leq n-1,i\neq j}z_{i}=a\prod_{1\leq i\leq n-1,i\neq j}x_{i}
\end{eqnarray*}
where $x$ is an integer solution to (\ref{eq:markoff-hurwitzeq})
corresponding to $z$. By using (\ref{eq:zn-1-not-too-small}) we
get $\alpha(\lambda_{j}(z))\geq z_{n-1}\geq\frac{1}{2}z_{n}^{\frac{1}{n-1}}$
and hence using $(\lambda_{j}z)_{n}\geq z_{n}$,
\begin{eqnarray}
\tau_{\star}(\lambda_{n-1}^{A}\lambda_{j} \tilde{\lambda} ) & \geq & \log\log(\frac{1}{2^{A}}z_{n}^{A/(n-1)}(1-2z_{n}^{-1/(n-1)})^{A}z_{n})-\log\log z_{n}\nonumber \\
 & \geq & \log\left(1+\frac{A}{n-1}\left(1+\frac{(n-1)\log(1-2z_{n}^{-1/(n-1)})-(n-1)\log2}{\log z_{n}}\right)\right)\nonumber \\
 & \geq & \log\left(1+\frac{A}{2(n-1)}\right).\label{eq:tau-star-lower-one}
\end{eqnarray}
where the last inequality is by the previously prepared (\ref{eq:function-of-zn-lower bound}).

Now, if $\lambda=\lambda_{n-1}^{A_{l}}\lambda_{j_{l}}\lambda_{n-1}^{A_{l-1}}\lambda_{j_{l-1}}\ldots\lambda_{n-1}^{A_{2}}\lambda_{j_{2}}\lambda_{n-1}^{A_{1}}\lambda_{j_{1}}$
then by $l$ applications of (\ref{eq:tau-star-lower-one}) we get
\begin{eqnarray*}
\tau_{\star}^{l}(\lambda) & = & \sum_{p=0}^{l-1}\tau_{\star}(s^{p}(\lambda))\geq\sum_{q=1}^{l}\log\left(1+\frac{A_{q}}{2(n-1)}\right).
\end{eqnarray*}
Therefore the number of $\lambda'$ that can contribute to (\ref{eq:combinatorial-bound-for-tau-star-leq-x})
is bounded by the size of the set

\begin{equation}
\left\{ (A_{1},A_{2},A_{3},\ldots,A_{L})\in\Z_{\geq0}^{L}:\sum_{q=1}^{L}\log\left(1+\frac{A_{q}}{2(n-1)}\right)\leq x\right\} .\label{eq:big-set}
\end{equation}
times the number of possible choices for $j_{1},\ldots j_{L}$. The
latter can be crudely bounded by $(n-2)^{L}.$

\emph{Claim. }The size of the set in (\ref{eq:big-set}) is bounded
by $(2(n-1)(c_{0}+x))^{L}e^{x}$ for some positive constant $c_{0}$.

\emph{Proof of Claim. }We prove this by induction on $L.$ The base
case $(L=1)$ is clear. For the induction, after choosing the first
$A_{1}$ the remaining $A_{2},\ldots,A_{L}$ must satisfy
\[
\sum_{q=2}^{L}\log\left(1+\frac{A_{q}}{2(n-1)}\right)\leq x-\log\left(1+\frac{A_{1}}{2(n-1)}\right).
\]
So the size of the set in (\ref{eq:big-set}) is bounded by
\begin{align*}
 & \sum_{A_{1}=1}^{\lfloor2(n-1)e^{x}\rfloor}(2(n-1))^{L-1}\left(c_{0}+x-\log\left(1+\frac{A_{1}}{2(n-1)}\right)\right)^{L-1}e^{x}\frac{1}{1+\frac{A_{1}}{2(n-1)}}\\
\leq & (2(n-1))^{L}(c_{0}+x)e^{x}\sum_{A_{1}=1}^{\lfloor2(n-1)e^{x}\rfloor}\frac{1}{2(n-1)+A_{1}}.
\end{align*}
The final sum is within a constant $c_{0}$ of $x.$ This completes
the proof of the Claim.

So in total we obtain that the sum in (\ref{eq:combinatorial-bound-for-tau-star-leq-x})
is bounded by $c_{1}^{L}(c_{0}+x){}^{L}e^{x}$ with $c_{1}=2(n-2)(n-1).$
As for the stated consequence, we get

\[
\sum_{l=1}^{L-1}\sum_{\lambda':s^{l}(\lambda')=\lambda}\mathbf{1}\left\{ \tau_{\star}^{l}(\lambda')\leq a\right\} \ll c_{1}^{L}(c_{0}+a)^{L}e^{a}.
\]
If we choose $L\approx$ $ca/\log(1+a)$ with $c$ small enough depending
on $\delta$ we obtain our result.
\end{proof}

Since we expect $M(\lambda z^{(0)},a)\approx e^{\beta a}$ with $\beta=\beta(n)>1$,
choosing parameters as in Lemma \ref{lem:iterated-error-term-small}
gives

\begin{equation}
M(\lambda z^{(0)},a)=\sum_{\lambda':s^{L}(\lambda')=\lambda}M(\lambda'z^{(0)},a-\tau_{\star}^{L}(\lambda'))+O(e^{(1+\delta)a})\label{eq:M-iterated-expression}
\end{equation}
 and the big $O$ term is truly an error term when $\delta$ is small.
The benefits to our iteration in (\ref{eq:M-iterated-expression})
can be quantified by the following result.
\begin{lem}
\label{lem:growth-of-alpha-in-L}There is some $C>0$ such that for
all $\lambda\in\Lambda\cup\{e\}$ and $\lambda'$ such that $s^{L}(\lambda')=\lambda,$
we have both

\begin{equation}
\alpha(\lambda'z^{(0)})\geq\frac{1}{2}\exp(C\phi^{L})\label{eq:alpha-growth}
\end{equation}
and
\begin{equation}
(\lambda'z^{(0)})_{n}\geq\exp(C\phi^{L})\label{eq:last-coordinate-growth}
\end{equation}
where $\phi=\frac{1+\sqrt{5}}{2}>1$ is the golden ratio.\end{lem}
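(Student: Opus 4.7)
The plan is to set up a Fibonacci-type recursion between the logarithms of the last two coordinates of the iterated images of $z^{(0)}$, from which the golden-ratio growth rate will follow. Write $\lambda' = \lambda_{n-1}^{A_L}\lambda_{j_L}\cdots\lambda_{n-1}^{A_1}\lambda_{j_1}\lambda$ with $A_p\in\Z_{\geq 0}$ and $1\leq j_p\leq n-2$, and define $\zeta^{(l)} = (\lambda_{n-1}^{A_l}\lambda_{j_l}\cdots\lambda_{n-1}^{A_1}\lambda_{j_1}\lambda).z^{(0)}$ for $0\leq l\leq L$. Since $\Lambda$ preserves $\M-K$ and unexceptionality is $\Aut(V)$-invariant, each $\zeta^{(l)}$ lies in $\M-K$ and is unexceptional, so the standing estimates \eqref{eq:zn-1-not-too-small}, \eqref{eq:ten}, \eqref{eq:2lowerbound} apply to each iterate.

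The crux is the one-step estimate
\[
\zeta^{(l+1)}_{n-1} \geq \zeta^{(l)}_n \qquad \text{and} \qquad \zeta^{(l+1)}_n \geq \tfrac{1}{2}\zeta^{(l)}_{n-1}\zeta^{(l)}_n.
\]
To prove it, set $w = \lambda_{j_{l+1}}\zeta^{(l)}$, so that $w_{n-1} = \zeta^{(l)}_n$ (since $j_{l+1}\leq n-2$) and $w_n = \prod_{i\neq j_{l+1}}\zeta^{(l)}_i - \zeta^{(l)}_{j_{l+1}}$. Using $\zeta^{(l)}_i\geq 1$, $\zeta^{(l)}_{j_{l+1}}\leq \zeta^{(l)}_{n-1}$, and \eqref{eq:ten}, one gets $w_n \geq \frac{1}{2}\zeta^{(l)}_{n-1}\zeta^{(l)}_n$. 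Then $\zeta^{(l+1)} = \lambda_{n-1}^{A_{l+1}}w$, and $\lambda_{n-1}$ acts on the last two coordinates by the linear map $(u,v)\mapsto(v,\alpha(w)v-u)$. Since $\alpha(w)\geq \zeta^{(l)}_{n-1}\geq 2$ by \eqref{eq:2lowerbound} and $w_n\geq w_{n-1}$, iterating this linear map leaves both coordinates non-decreasing, which gives $\zeta^{(l+1)}_n\geq w_n$ and $\zeta^{(l+1)}_{n-1}\geq \zeta^{(l)}_n$ regardless of whether $A_{l+1}=0$ or $A_{l+1}\geq 1$.

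Setting $a_l = \log\zeta^{(l)}_n$, the one-step estimate yields $a_{l+1}\geq a_l + a_{l-1} - \log 2$ for $l\geq 1$. Substituting $\tilde a_l = a_l - \log 2$ (nonnegative by \eqref{eq:ten}) produces the clean Fibonacci inequality $\tilde a_{l+1}\geq \tilde a_l + \tilde a_{l-1}$. Using the identity $\phi^{-1}+\phi^{-2}=1$, one verifies that if $\tilde a_0, \tilde a_1\geq c>0$ then $\tilde a_L\geq (c/\phi)\phi^L$ for all $L$; applying this with $c = \log 5$ (valid uniformly by \eqref{eq:ten} applied to $\zeta^{(0)}$ and $\zeta^{(1)}$) gives $\zeta^{(L)}_n\geq\exp(C'\phi^L)$ for a universal $C'>0$. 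The $\alpha$ bound then follows from $\alpha(\zeta^{(L)})\geq \zeta^{(L)}_{n-1}\geq \zeta^{(L-1)}_n\geq \exp((C'/\phi)\phi^L)$, so taking $C$ to be the minimum of $C'/\phi$ and a small constant handling the $L=0,1$ base cases delivers both stated conclusions.

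The main obstacle is the bookkeeping in the one-step estimate, particularly verifying $\zeta^{(l+1)}_{n-1}\geq \zeta^{(l)}_n$ uniformly in $A_{l+1}$ using the explicit linear-algebra form of $\lambda_{n-1}$; once that recursion is in hand, the golden ratio appears automatically as the positive root of the characteristic polynomial $x^2 = x+1$, and uniformity in $\lambda\in\Lambda\cup\{e\}$ is built in since the initial lower bounds on $\zeta^{(0)}_n$ and $\zeta^{(0)}_{n-1}$ depend only on the standing estimates of Section \ref{incK}.
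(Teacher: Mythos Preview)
Your approach is essentially the paper's: both establish a Fibonacci-type recursion for $\log \zeta^{(l)}_n$ (the paper phrases it multiplicatively as $Z_p \geq (Z_{p-1}-1)Z_{p-2}$, which after taking logarithms and using $Z_{p-1}\geq 10$ is your $a_{l+1}\geq a_l+a_{l-1}-\log 2$) and read off golden-ratio growth.

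There is one slip in your final chain for the $\alpha$-bound: the inequality $\alpha(\zeta^{(L)}) \geq \zeta^{(L)}_{n-1}$ is not valid in general. When $A_L \geq 1$ the coordinate $\zeta^{(L)}_{n-1}$ is of order $\alpha(\zeta^{(L)})\cdot\zeta^{(L-1)}_n$, which can be much larger than $\alpha(\zeta^{(L)})$. What you do have (and what your own computation of $\alpha(w)$ already shows) is
\[
\alpha(\zeta^{(L)}) = \alpha(\lambda_{j_L}\zeta^{(L-1)}) = \prod_{\substack{i\leq n-1\\ i\neq j_L}}\zeta^{(L-1)}_i \;\geq\; \zeta^{(L-1)}_{n-1},
\]
since $j_L\leq n-2$ forces $\zeta^{(L-1)}_{n-1}$ to appear as a factor and the remaining factors are $\geq 1$. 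Then your one-step estimate gives $\zeta^{(L-1)}_{n-1}\geq \zeta^{(L-2)}_n \geq \exp(C'\phi^{L-2})$ for $L\geq 2$, which after adjusting $C$ (and absorbing the base cases $L=0,1$) yields \eqref{eq:alpha-growth}. The paper handles this step slightly differently, invoking \eqref{eq:zn-1-not-too-small} to pass from $\zeta^{(L-1)}_{n-1}$ to $\tfrac{1}{2}(\zeta^{(L-1)}_n)^{1/(n-1)}$.
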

\begin{proof}
For $1\leq j\leq n-2$

\[
(\lambda_{j}z)_{n}=\prod_{i\neq j}z_{i}-z_{j}=z_{n}z_{n-1}\prod_{i\neq j,n-1,n}z_{i}-z_{j}\geq(z_{n}-1)z_{n-1}
\]
since $z_{i}\geq1$ for all $i$ and $z_{n-1}\geq z_{j}.$ So then
for any $A\geq0$

\[
(\lambda_{n-1}^{A}\lambda_{j}z)_{n}\geq(\lambda_{j}z)_{n}\geq(z_{n}-1)z_{n-1}.
\]
Then 
\begin{eqnarray*}
(\lambda_{n-1}^{A_{2}}\lambda_{j_{2}}\lambda_{n-1}^{A_{1}}\lambda_{j_{1}}z)_{n} & \geq & ((\lambda_{n-1}^{A_{1}}\lambda_{j_{1}}z)_{n}-1)(\lambda_{n-1}^{A_{1}}\lambda_{j_{1}}z)_{n-1}\\
 & \geq & ((\lambda_{n-1}^{A_{1}}\lambda_{j_{1}}z)_{n}-1)z{}_{n}
\end{eqnarray*}
using the inequality $(\lambda z)_{n-1}\geq z_{n}$ for any $\lambda\in\Lambda.$
Therefore the numbers

\[
Z_{p}=(\lambda_{n-1}^{A_{p}}\lambda_{j_{p}}\ldots\lambda_{n-1}^{A_{2}}\lambda_{j_{2}}\lambda_{n-1}^{A_{1}}\lambda_{j_{1}}z)_{n}\geq10
\]
(cf. (\ref{eq:ten})) satisfy the two stage recursive estimate $Z_{p}\geq(Z_{p-1}-1)Z_{p-2}$
for $p\geq2.$ Then an elementary argument gives the existence of
$C$ such that

\[
Z_{p}\geq\exp(C\phi^{p}).
\]
This gives the required (\ref{eq:last-coordinate-growth}).

On the other hand

\[
\alpha(\lambda_{n-1}^{A}\lambda_{j}z)\geq\alpha(\lambda_{j}z)\geq z_{n-1}\geq\frac{1}{2}z_{n}^{\frac{1}{n-1}}
\]
 where the last inequality is by (\ref{eq:zn-1-not-too-small}). The
result (\ref{eq:alpha-growth}) now follows after replacing $C$ with
a suitable smaller constant.
\end{proof}

In the sequel we choose

\[
L=\Big\lceil c\frac{a}{\log a}\Big\rceil
\]
 so that (\ref{eq:M-iterated-expression}) and \eqref{eq:bound-for-IBP}
hold with\footnote{We know by Remark \ref{rem:betabig} that $\beta\geq2.$}
\begin{equation}
\delta=\min\left(\frac{1}{10},\frac{\beta-1}{2}\right).\label{eq:delta-definition}
\end{equation}
Then for all $\lambda'z^{(0)}$ appearing in (\ref{eq:M-iterated-expression})
we have
\begin{equation}
\alpha(\lambda'z^{(0)})\geq\frac{1}{2}\exp(C\phi^{ca/\log a})\label{eq:what-alpha-is}
\end{equation}
by Lemma \ref{lem:growth-of-alpha-in-L}.

\subsection{Comparison to the linear count}

\label{sec:comparison-section}

Now we relate the terms $M(\lambda'z^{(0)},a)$ appearing in (\ref{eq:M-iterated-expression})
to orbital counting for $\Gamma$, the linear semigroup defined in
the Introduction. We begin with the expression for $M(\lambda'z^{(0)},a)$
in (\ref{eq:M-definition}). Denoting by  $S^{(N)}$ the $N$-fold
product\footnote{That is, $S^{(N)}$ is the elements of $\Lambda'$ that are a product
of $N$ generators. We extend this definition to $S^{(0)}=\{e\}.$} of the countable generating set $S$ for $\Lambda'$, then we can
write
\begin{equation}
M(\lambda'z^{(0)},a)=\sum_{N=0}^{\infty}\sum_{\lambda^{(2)}\in S^{(N)}}\mathbf{1}\{\tau_{\star}^{N}(\lambda^{(2)}\lambda')\leq a\}.\label{eq:M-lambda-prime-expression-for-comparison-to-linear}
\end{equation}
We will proceed by
\begin{enumerate}
\item \label{enu:fitting-the-point}Matching $\lambda'z^{(0)}$ with some
element of $\H\subset\R_{+}^{n}$ that is very close to\footnote{When we write $\log$ of a vector we always mean take $\log$ of each
coordinate.} $\log(\lambda'z^{(0)}).$
\item \label{enu:Matching--group-elements}Matching each $\lambda^{(2)}$
with an element $\gamma^{(2)}$ of $\Gamma$ in the obvious way.
\end{enumerate}

With Part \ref{enu:fitting-the-point} in mind, we define for $z\in\M$

\[
f(z)\equiv(\log z_{1},\log z_{2},\ldots,\log z_{n-1},\sum_{j=1}^{n-1}\log z_{i}).
\]
The reason to use this map over just taking $\log$ of coordinates
is that we expect $\log(z)$ to be very close to the hyperplane $\H$
defined in (\ref{eq:hyperplane-defn}), so we just go ahead and fit
$\log(z)$ to this plane. The following lemma (cf. Lemma 2 in \cite{ZAGIER}) says that when $\alpha(z)$
is big, $f(z)$ is a good\footnote{\label{fn:approximation-quality-footnote}Although our $f$ is not
even close to being as good as Zagier's function $f$ from \cite{ZAGIER}:
the quality of fit of Zagier's $f$ improves with the size of $z_{n-1}$
whereas we need $z_{n-2}$ to be big. This is one reason we must accelerate.} fit to $\log(z).$ In this paper, we write inequalities between vectors
to mean they hold at every coordinate.

\begin{lem}
\label{lem:Log-vs-f}There are constants $C_{1}$ and $C_{2}$ depending only
on $n$ such that when $z\in\M-K$ with $\alpha(z)>C_{1}$

\begin{equation}
\log(z)\leq f(z)\leq\log(z)+C_{2}\alpha(z)^{-2}(0,0,0,\ldots,0,1).\label{eq:f-log-sandwich}
\end{equation}
\end{lem}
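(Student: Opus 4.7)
The plan is to observe first that only the last coordinate is non-trivial: by definition of $f$, the first $n-1$ coordinates of $f(z)$ coincide exactly with $\log(z)_i = \log z_i$, so both inequalities there are equalities. Thus the content is purely the two-sided estimate
\[
\log z_n \;\leq\; \sum_{j=1}^{n-1}\log z_j \;\leq\; \log z_n + C_2 \alpha(z)^{-2}
\]
in the last coordinate.

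The key identity comes from the normalized Markoff-Hurwitz equation \eqref{eq:markoff-hurwitz-normalized}. Dividing by $z_n$ yields
\[
\prod_{j=1}^{n-1} z_j \;=\; z_n \;+\; \frac{1}{z_n}\Bigl(\sum_{j=1}^{n-1} z_j^2 - k'\Bigr),
\]
so that
\[
\frac{\prod_{j=1}^{n-1} z_j}{z_n} \;=\; 1 + \frac{\sum_{j=1}^{n-1} z_j^2 - k'}{z_n^2}.
\]
The lower bound $\log z_n \leq \sum_{j=1}^{n-1}\log z_j$ is then immediate from the preparation \eqref{eq:C(z)-minus-kdash-nonnegative}, which guarantees that $\sum_{j=1}^{n-1} z_j^2 - k' \geq 0$ throughout $\M - K$.

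For the upper bound, I would use $\log(1+x)\leq x$ (valid for $x\geq 0$) together with the crude bound $\sum_{j=1}^{n-1} z_j^2 \leq (n-1)z_{n-1}^2$ to reduce the problem to showing
\[
\frac{(n-1)z_{n-1}^2 + |k'|}{z_n^2} \;\ll\; \alpha(z)^{-2}.
\]
The main mechanism is to extract the comparison $z_{n-1} \asymp z_n/\alpha(z)$ from the same equation: writing the Markoff-Hurwitz equation as $\alpha(z)\, z_{n-1}\, z_n = \sum_{j=1}^{n}z_j^2 - k'$ and using $z_j\leq z_n$ gives
\[
\alpha(z)\, z_{n-1}\, z_n \;\leq\; n z_n^2 + |k'| \;\leq\; (n+1) z_n^2
\]
(the second inequality uses the lower bound \eqref{eq:ten} on $z_n$, perhaps after enlarging $K$), so $z_{n-1}/z_n \leq (n+1)/\alpha(z)$ and thus the leading term is controlled with constant $(n-1)(n+1)^2$.

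The only nuisance is absorbing the $|k'|/z_n^2$ term into the $\alpha(z)^{-2}$ estimate. Here I would use that $\alpha(z) \leq 2(n+1) z_n^{(n-2)/(n-1)}$ (combining the above with the standing estimate \eqref{eq:zn-1-not-too-small} that $z_{n-1} \geq \tfrac{1}{2} z_n^{1/(n-1)}$), so $\alpha(z)^2 \leq C z_n^{2(n-2)/(n-1)}$, and since $2(n-2)/(n-1) < 2$, the quantity $|k'|\alpha(z)^2/z_n^2$ tends to $0$ with $z_n$. Enlarging the radius of $K$ (and taking $C_1$ large so the requirement $\alpha(z) > C_1$ is meaningful) makes this term $\leq \alpha(z)^{-2}$, which completes the estimate after absorbing everything into a single constant $C_2 = C_2(n)$. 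The only genuine obstacle is the juggling of these inequalities to ensure the constants really depend only on $n$; no single step is hard.
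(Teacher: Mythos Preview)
Your argument is correct. The route differs from the paper's in how the algebra is organized. The paper writes $z_n$ via the quadratic formula as
\[
z_n=\frac{A(z)}{2}\left(1+\sqrt{1-4\frac{C(z)-k'}{A(z)^2}}\right),\quad A(z)=\prod_{i=1}^{n-1}z_i,\quad C(z)=\sum_{i=1}^{n-1}z_i^2,
\]
and then bounds $(C(z)-k')/A(z)^2$ directly. Because $A(z)=\alpha(z)z_{n-1}$, the bound $C(z)\le (n-1)z_{n-1}^2$ cancels against the $z_{n-1}^2$ in $A(z)^2$ to give $(n-1)\alpha(z)^{-2}$ in one stroke, and the $k'$ term is absorbed by the already-prepared lower bound on $z_{n-1}$. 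By contrast, you divide by $z_n^2$ rather than $A(z)^2$, which forces the auxiliary comparison $z_{n-1}/z_n\le (n+1)/\alpha(z)$ (essentially re-deriving $z_n\asymp A(z)$ from the equation) and then a separate argument to absorb $|k'|/z_n^2$ via $\alpha(z)\ll z_n^{(n-2)/(n-1)}$. Both reach the same endpoint; the paper's choice of denominator makes the $\alpha(z)^{-2}$ appear immediately with no detour, while your version is a little more elementary in that it avoids the quadratic formula and the square-root expansion altogether. Your handling of the $k'$ term (pushing its dependence into the radius of $K$ so that $C_2$ genuinely depends only on $n$) is in fact more careful than the paper's proof, which tacitly assumes $k'\ge 0$.
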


\begin{proof}
Since $z$ satisfies the equation (\ref{eq:markoff-hurwitz-normalized}),
and $z_{n}$ is always the larger of the two quadratic roots of the
resulting quadratic in $z_{n},$ we have

\[
z_{n}=\frac{A(z)}{2}\left(1+\sqrt{1-4\frac{C(z)-k'}{A(z)^{2}}}\right)
\]
where 
\[
A(z)=\prod_{i=1}^{n-1}z_{i},\quad C(z)=\sum_{i=1}^{n-1}z_{i}^{2}
\]
and $k'\geq0$ is the constant from (\ref{eq:markoff-hurwitz-normalized}).
Now the first inequality of (\ref{eq:f-log-sandwich}) follows from
(\ref{eq:C(z)-minus-kdash-nonnegative}).

For the second inequality we estimate 
\[
\frac{C(z)-k'}{A(z)^{2}}\leq\sum_{i=1}^{n-1}\frac{z_{n-1}^{2}}{\prod_{j\neq n}z_{i}^{2}}\leq(n-1)\frac{1}{\prod_{j\leq n-2}z_{j}^{2}}=(n-1)\alpha(z)^{-2}.
\]
We can then choose $C_{1}$ large enough so that when $\alpha(z)>C_{1}$
we have 

\[
z_{n}=A(z)(1+O_{n}(\alpha(z)^{-2})),
\]
by increasing $C_{1}$ again if necessary we obtain

\[
\log(z_{n})=\log(A(z))+O_{n}(\alpha(z)^{-2})=f(z)_{n}+O_{n}(\alpha(z)^{-2}).
\]
\end{proof}

The following adapts an idea of Zagier from \cite[Proof of Lemma 3]{ZAGIER}
to our setting. While the strength of approximation is different,
we take the same approach in noting that if $f(z)$ is close to $y$
then $f(\lambda_{j}z)$ will be close to $\gamma_{j}y$. Of course
this is designed to be iterated.
\begin{lem}
\label{lem:boundingygrowth}There are $C_{1},C_{2}$ depending only
on $n$ such that for all $\e>0,$ for $z\in\M-K,$ 
$\alpha(z)>\max(C_{1},2C_{2}^{1/2}\epsilon^{-1/2})$,
and for $y^{(1)},y^{(2)}\in\H,$ if 

\begin{equation}
y^{(1)}+\epsilon(0,0,\ldots0,\frac{1}{2},\frac{1}{2},1)<f(z)\leq y^{(2)}\label{eq:epsilon-setup}
\end{equation}
then 

\begin{equation}
\gamma_{j}y^{(1)}+\e(0,0,\ldots0,\frac{1}{2},\frac{1}{2},1)<f(\lambda_{j}z)\leq\gamma_{j}y^{(2)}\label{eq:epsiloncarry}
\end{equation}
for all $1\leq j\leq n-1.$
\end{lem}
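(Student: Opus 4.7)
The plan is to reduce matters to a direct comparison between $f(\lambda_j z)$ and $\gamma_j f(z)$: they differ by a vector whose size is controlled by Lemma \ref{lem:Log-vs-f}, and then the hypothesis on $y^{(1)}, y^{(2)}$ can be transported through $\gamma_j$ using its linearity and coordinate-wise monotonicity on nonnegative vectors.

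First I would compute $f(\lambda_j z)$ coordinate-by-coordinate. Since $z \in \M$ is ordered and $j \leq n-1$, the first $n-1$ entries of $\lambda_j z$ are $z_1,\dots,\widehat{z_j},\dots,z_n$ in order, so $f(\lambda_j z)_i = \log z_i$ at the corresponding indices (in particular $f(\lambda_j z)_{n-1} = \log z_n$), and $f(\lambda_j z)_n = \sum_{i \neq j}\log z_i$. Comparing this with the definition of $\gamma_j f(z)$ gives the identity
\[
f(\lambda_j z) \;=\; \gamma_j f(z) \;-\; \Delta\,(0,\dots,0,1,1), \qquad \Delta := f(z)_n - \log z_n,
\]
so the two vectors agree in coordinates $1,\dots,n-2$ and differ by the same amount $\Delta$ in coordinates $n-1$ and $n$. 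By Lemma \ref{lem:Log-vs-f}, after taking $C_1$ at least as large as the constant appearing there, we have $0 \leq \Delta \leq C_2 \alpha(z)^{-2}$.

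The upper bound of \eqref{eq:epsiloncarry} is then immediate: $f(\lambda_j z) \leq \gamma_j f(z) \leq \gamma_j y^{(2)}$, the last step by coordinate-wise monotonicity of $\gamma_j$. For the strict lower bound, linearity of $\gamma_j$ applied to the hypothesis $y^{(1)} + \epsilon v < f(z)$, with $v := (0,\dots,0,\tfrac12,\tfrac12,1)$, yields
\[
\gamma_j y^{(1)} + \epsilon\,\gamma_j v \;<\; \gamma_j f(z) \;=\; f(\lambda_j z) + \Delta\,(0,\dots,0,1,1),
\]
so it suffices to verify $\epsilon(\gamma_j v - v) \geq \Delta\,(0,\dots,0,1,1)$ coordinate-wise. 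A short case analysis on $j$ confirms this: when $j \in \{n-2, n-1\}$ one computes $\gamma_j v = (0,\dots,0,\tfrac12,1,\tfrac32)$, whence $\gamma_j v - v = (0,\dots,0,0,\tfrac12,\tfrac12)$; when $j \leq n-3$ the slack at positions $n-1, n$ is at least as large (and there is an additional positive gain at position $n-3$). In every case $(\gamma_j v - v)_{n-1}, (\gamma_j v - v)_n \geq \tfrac12$, so the required inequality reduces to $\epsilon/2 \geq \Delta$. The hypothesis $\alpha(z) > 2 C_2^{1/2}\epsilon^{-1/2}$ gives $\Delta \leq C_2\alpha(z)^{-2} < \epsilon/4$, and we are done.

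The only real content is the identity $f(\lambda_j z) = \gamma_j f(z) - \Delta(0,\dots,0,1,1)$, which isolates the entire nonlinearity into a single quantity bounded by Lemma \ref{lem:Log-vs-f}; after that the argument is bookkeeping. The delicate point is that the direction vector $v = (0,\dots,0,\tfrac12,\tfrac12,1)$ in the hypothesis must be chosen so that $\gamma_j v - v$ has strictly positive entries at coordinates $n-1$ and $n$ for every admissible $j$, since these are exactly the two coordinates where the error $\Delta$ lives. The coefficients $(\tfrac12, \tfrac12, 1)$ are calibrated precisely to give this absorption in the worst cases $j \in \{n-2, n-1\}$.
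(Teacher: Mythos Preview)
Your proof is correct and follows essentially the same approach as the paper: both arguments use Lemma~\ref{lem:Log-vs-f} to bound the discrepancy $f(z)_n - \log z_n$ and then propagate the hypothesis through $\gamma_j$ coordinate-by-coordinate. Your packaging via the identity $f(\lambda_j z) = \gamma_j f(z) - \Delta(0,\dots,0,1,1)$ is a clean way to isolate the single nonlinear term; the paper instead unpacks the same computation directly on $\log z_i$, but the content is identical.
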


\begin{proof}
We first prove the upper bound for $f(\lambda_{j}z)$ from (\ref{eq:epsiloncarry}).
The inequality $f(z)\leq y^{(2)}$ implies that $\log(z_{i})\leq y_{i}^{(2)}$
for $i\leq n-1.$ By Lemma \ref{lem:Log-vs-f} we get $\log(z_{n})\leq f(z)_{n}\leq y_{n}$
as well. Then $f(\lambda_{j}z)\leq\gamma_{j}y^{(2)}$ follows.

For the other inequality, $f(z)>y^{(1)}+\epsilon(0,0,\ldots0,1/2,1/2,1)$
implies $\log(z_{i})>y_{i}^{(1)}$ for all $i\leq n-3$ and $\log(z_{i})>y_{i}^{(1)}+\e/2$
for $i=n-2,n-1$. By Lemma \ref{lem:Log-vs-f}, $\log(z_{n})\geq f(z)_{n}-C_{2}\alpha(z)^{-2}\geq y_{n}^{(1)}+\e-C_{2}\alpha(z)^{-2}.$
Since $\alpha(z)>2C_{2}^{1/2}\epsilon^{-1/2}$ we get 
\[
\log(z_{n})\geq y_{n}^{(1)}+3\epsilon/4.
\]
When $i\leq n-3$ we have $f(\lambda_{j}z)_{i}\geq(\gamma_{j}y^{(1)})_{i}$
quite clearly. If $j\leq n-2$ we have $f(\lambda_{j}z)_{n-2}=\log z_{n-1}\geq y_{n-1}^{(1)}+\e/2=(\gamma_{j}y^{(1)})_{n-2}+\epsilon/2$
and if $j=n-1$ then $f(\lambda_{j}z)_{n-2}=\log z_{n-2}\geq y_{n-2}^{(1)}+\e/2=(\gamma_{j}y^{(1)})_{n-2}+\epsilon/2$.
At the $(n-1)$st coordinate we have $f(\lambda_{j}z)_{n-1}=\log z_{n}\geq y_{n}^{(1)}+3\e/4=(\gamma_{j}y^{(1)})_{n-1}+3\epsilon/4$
which is sufficient. It remains to check the last coordinate. Here,

\[
f(\lambda_{j}z)_{n}=\sum_{i\neq j}\log z_{i}\geq\sum_{i\neq j}y_{i}^{(1)}+5\epsilon/4=(\gamma_{j}y^{(1)})_{n}+5\epsilon/4.
\]
The inequality above is due to the fact that at least one of $\log z_{n-2},\log z_{n-1}$
appear on the left hand side (giving $\epsilon/2)$ and $\log z_{n}$
also appears (giving $3\epsilon/4).$
\end{proof}

We can now accomplish Parts \ref{enu:fitting-the-point} and \ref{enu:Matching--group-elements}
of our plan above. Recall we have some fixed $z^{(0)}\in\M-K.$ For
each given $\lambda'\in\Lambda$ (in particular, those that occur
in (\ref{eq:M-iterated-expression})) we define 

\[
y(\lambda')=f(\lambda'z^{(0)}).
\]
We choose our parameters as follows: let $C_{2}$ be the constant
from Lemma \ref{lem:boundingygrowth} and set 
\begin{equation}
\epsilon=\epsilon(a)=16C_{2}\exp(-2C\phi^{ca/\log a}).\label{eq:epsilon-choice}
\end{equation}
so that by (\ref{eq:what-alpha-is})

\[
4C_{2}\alpha(\lambda'z^{(0)})^{-2}\leq\epsilon
\]
for all $\lambda'$ appearing in (\ref{eq:M-iterated-expression}).
\begin{lem}[Completing Part \ref{enu:fitting-the-point}]
We have 
\[
(1-\epsilon)y(\lambda')+\epsilon(0,0,\ldots0,\frac{1}{2},\frac{1}{2},1)<f(\lambda'z^{(0)})=y(\lambda').
\]
\end{lem}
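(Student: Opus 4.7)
The plan is to verify the asserted vector inequality coordinate by coordinate; the equality $f(\lambda' z^{(0)}) = y(\lambda')$ is immediate from the definition of $y(\lambda')$. Subtracting $(1-\epsilon) y(\lambda')$ from both sides of the strict inequality reduces the claim to
\[
y(\lambda')_i > 0 \;\; (i \le n-3), \qquad y(\lambda')_{n-2},\, y(\lambda')_{n-1} > \tfrac12, \qquad y(\lambda')_n > 1.
\]

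The three right-most inequalities follow from the setup in Section \ref{incK} together with Lemmas \ref{lem:Log-vs-f} and \ref{lem:growth-of-alpha-in-L}. Writing $z := \lambda' z^{(0)} \in \M - K$, the inequality \eqref{eq:2lowerbound} gives $y(\lambda')_{n-1} = \log z_{n-1} > \log 2 > \tfrac12$; the ordering $z_1 \le \ldots \le z_{n-2}$ yields $z_{n-2} \ge \alpha(z)^{1/(n-2)}$, which when combined with the doubly-exponential lower bound $\alpha(z) \ge \tfrac12 \exp(C\phi^L)$ of Lemma \ref{lem:growth-of-alpha-in-L} makes $y(\lambda')_{n-2}$ enormous, in particular $>\tfrac12$; and Lemma \ref{lem:Log-vs-f} together with \eqref{eq:ten} gives $y(\lambda')_n = f(z)_n \ge \log z_n \ge \log 10 > 1$.

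The first $n-3$ coordinates reduce to $z_i > 1$. When $a \ge 2$ this is automatic from $z_i \ge a^{1/(n-2)} > 1$. The case $a = 1$ is the only delicate point, since a priori $z$ can have coordinates equal to $1$: tuples in $\M$ with one or more coordinates equal to $1$ satisfy a lower-dimensional Markoff-Hurwitz equation, hence by the same infinite-descent reasoning as in Proposition \ref{prop:mhmove-dynamics} they lie in a fixed compact set, which may be absorbed into $K$. Combined with the observation that each move $\lambda_j$ maps an ordered tuple to an ordered tuple whose $i$-th coordinate is either $z_i$ or $z_{i+1}$ (so the smallest coordinate is monotone nondecreasing along any $\Lambda$-orbit), this propagates $z_i > 1$ from the finitely many basepoints $z^{(0)} \in z(\mathcal U)$ to all $\lambda' z^{(0)}$ appearing in \eqref{eq:M-iterated-expression}. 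I expect this $a=1$ bookkeeping to be the only mildly subtle step; once handled, the lemma is a direct coordinatewise verification.
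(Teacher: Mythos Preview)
Your arguments for the last three coordinates are correct but more laborious than necessary. The paper argues more directly: for the $\lambda'$ at hand one has $(\lambda' z^{(0)})_{n-2} \ge (z^{(0)})_{n-1} > 2$ by \eqref{eq:2lowerbound}, hence $y(\lambda')_{n-2} > \log 2 > \tfrac12$; then the ordering and summation constraint defining $\H$ immediately give $y(\lambda')_{n-1} \ge y(\lambda')_{n-2} > \tfrac12$ and $y(\lambda')_n = \sum_{i<n} y(\lambda')_i > 1$, without separate appeals to Lemma~\ref{lem:growth-of-alpha-in-L} or \eqref{eq:ten}.

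Your treatment of the first $n-3$ coordinates, however, contains a genuine error in the case $a=1$. You claim that tuples in $\M$ with a coordinate equal to $1$ ``lie in a fixed compact set'' because they satisfy a lower-dimensional Markoff--Hurwitz equation. But infinite descent does not say the solution set is compact---only that every solution descends to a compact fundamental set; the full solution set of the $(n-1)$-variable equation is typically infinite. Thus $\{z\in\M: z_1=1\}$ is unbounded and cannot be absorbed into $K$. Concretely, if $a=1$ and $(z^{(0)})_1=1$, then any $\lambda'$ built solely from moves $\lambda_j$ with $j\ge 2$ satisfies $(\lambda' z^{(0)})_1 = (z^{(0)})_1 = 1$, so $y(\lambda')_1 = 0$ and the strict inequality fails outright. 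Your monotonicity observation is correct but cannot rescue this, since the basepoints themselves need not satisfy $(z^{(0)})_1>1$. In fact the paper does not establish strict inequality in these coordinates either: its proof only uses $y(\lambda')_i \ge 0$ there (immediate from $z_i\ge 1$), and this is harmless because downstream only the $n$-th coordinate bound is used in deriving \eqref{eq:sandwich-log-terms}.
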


\begin{proof}
For any nonidentity map $\lambda'\in \Lambda$, 
\[
(\lambda' z^{(0)})_{n-2} \geq (z^{(0)})_{n-1} > 2,
\]
using \eqref{eq:2lowerbound}. Therefore $f(\lambda'z^{(0)})_{n-2} \geq \log(2) > 1/2$. Since $f(\lambda'z^{(0)}) \in \H$ it follows that
\[
\e f(\lambda'z^{(0)})\geq\epsilon(0,0,\ldots0,\frac{1}{2},\frac{1}{2},1),
\]
from which the lemma is a direct consequence.
\end{proof}
Now for each 
\[
\lambda^{(2)}=\lambda_{n-1}^{A_{N}}\lambda_{j_{N}}\lambda_{n-1}^{A_{N-1}}\lambda_{j_{N-1}}\ldots\lambda_{n-1}^{A_{2}}\lambda_{j_{2}}\lambda_{n-1}^{A_{1}}\lambda_{j_{1}}\in S^{(N)},\quad1\leq j_{i}\leq n-2\:\forall i
\]
 appearing in (\ref{eq:M-lambda-prime-expression-for-comparison-to-linear}),
we set

\begin{equation}
\g^{(2)}=\gamma^{(2)}(\lambda^{(2)})=\gamma_{n-1}^{A_{N}}\gamma_{j_{N}}\gamma_{n-1}^{A_{N-1}}\gamma_{j_{N-1}}\ldots\gamma_{n-1}^{A_{2}}\gamma_{j_{2}}\gamma_{n-1}^{A_{1}}\gamma_{j_{1}}\in\Gamma'\cup\{e\}.\label{eq:gamma-lambda-matching}
\end{equation}
This is the matching of Part \ref{enu:Matching--group-elements}.
Since $\Lambda'$ and $\Gamma'$ are free, this gives a bijective
correspondence. 

The key point now is that by iterating Lemma \ref{lem:boundingygrowth}
we obtain for all coupled $\lambda^{(2)},\gamma^{(2)},$
\[
(1-\epsilon)\gamma^{(2)}.y(\lambda')+\epsilon(0,0,\ldots0,\frac{1}{2},\frac{1}{2},1)<f(\lambda^{(2)}\lambda'z^{(0)})\leq\gamma^{(2)}.y(\lambda')
\]
where we have used the linearity of the action of $\Gamma$ to pull
out the factor of $(1-\epsilon).$ Using Lemma \ref{lem:Log-vs-f}
we get
\[
\log(\lambda^{(2)}\lambda'z^{(0)})_{n}\leq f(\lambda^{(2)}\lambda'z^{(0)})_{n}\leq(\gamma^{(2)}.y(\lambda'))_{n}
\]
and
\[
\log(\lambda^{(2)}\lambda'z^{(0)})_{n}\geq f(\lambda^{(2)}\lambda'z^{(0)})_{n}-\frac{\epsilon}{4}\geq(1-\epsilon)(\gamma^{(2)}.y(\lambda'))_{n}.
\]
Then taking logarithms gives
\begin{equation}
\log\log(\lambda^{(2)}\lambda'z^{(0)})_{n}\leq\log(\gamma^{(2)}.y(\lambda'))_{n}\leq\log\log(\lambda^{(2)}\lambda'z^{(0)})_{n}+2\epsilon\label{eq:sandwich-log-terms}
\end{equation}
using $2\epsilon+\log(1-\epsilon) > 0$ for $\epsilon\ll 1.$

Note (\ref{eq:sandwich-log-terms})
also holds when $\g^{(2)}=e$, $\lambda^{(2)}=e.$ Now we claim we
can reasonably compare each of the $M(\lambda'z^{(0)},a-\tau_{\star}^{L}(\lambda'))$
from (\ref{eq:M-iterated-expression}) to $N(y(\lambda'),a')$ defined
in (\ref{eq:N-of-y-a-definition}) with $a'$ very close to $a-\tau_{\star}^{L}(\lambda')$.

\begin{lem}
\label{lem:sandwich-N-M}We have

\[
N(y(\lambda'),a-\tau_{\star}^{L}(\lambda')-\epsilon)\leq M(\lambda'z^{(0)},a-\tau_{\star}^{L}(\lambda'))\leq N(y(\lambda'),a-\tau_{\star}^{L}(\lambda')+\epsilon).
\]

\end{lem}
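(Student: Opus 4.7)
The plan is to use the bijection between $\Lambda' \cup \{e\}$ and $\Gamma' \cup \{e\}$ induced by the matching $\lambda^{(2)} \leftrightarrow \gamma^{(2)}$ of (\ref{eq:gamma-lambda-matching}) to compare the summands of $M$ and $N$ term by term, and then to absorb the discrepancy between matched summands using the sandwich (\ref{eq:sandwich-log-terms}).

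Setting $b := a - \tau_\star^L(\lambda')$, I expand
\[
M(\lambda'z^{(0)}, b) = \sum_{\lambda^{(2)} \in \Lambda' \cup \{e\}} \mathbf{1}\{\log\log(\lambda^{(2)}\lambda'z^{(0)})_n - \log\log(\lambda'z^{(0)})_n \leq b\}
\]
and
\[
N(y(\lambda'), b') = \sum_{\gamma^{(2)} \in \Gamma' \cup \{e\}} \mathbf{1}\{\log(\gamma^{(2)}.y(\lambda'))_n - \log y(\lambda')_n \leq b'\}.
\]
Since $\Lambda'$ and $\Gamma'$ are both free on their respective accelerated generating sets $S_\Lambda$ and $T_\Gamma$, extending (\ref{eq:gamma-lambda-matching}) by $e \mapsto e$ yields a bijection of the two index sets.

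Next apply (\ref{eq:sandwich-log-terms}) twice: once to each matched pair $(\lambda^{(2)}, \gamma^{(2)})$, and once to the trivial pair $(e,e)$ (this case being explicitly noted immediately after (\ref{eq:sandwich-log-terms})). Writing $A_1 = \log\log(\lambda^{(2)}\lambda'z^{(0)})_n$, $A_2 = \log\log(\lambda'z^{(0)})_n$, $B_1 = \log(\gamma^{(2)}.y(\lambda'))_n$, $B_2 = \log y(\lambda')_n$, both sandwiches read $A_i \leq B_i \leq A_i + 2\epsilon$, and subtracting yields $|(B_1 - B_2) - (A_1 - A_2)| \leq 2\epsilon$. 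Consequently, for each matched pair, the $\lambda^{(2)}$-summand of $M(\lambda'z^{(0)}, b)$ is dominated by the $\gamma^{(2)}$-summand of $N(y(\lambda'), b+2\epsilon)$, and dominates the $\gamma^{(2)}$-summand of $N(y(\lambda'), b - 2\epsilon)$. Summing over the bijective index set gives
\[
N(y(\lambda'), b - 2\epsilon) \leq M(\lambda'z^{(0)}, b) \leq N(y(\lambda'), b + 2\epsilon).
\]

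This matches the claimed inequality up to the harmless factor of $2$ in front of $\epsilon$, which can be absorbed either by running the whole argument with $\epsilon/2$ in place of $\epsilon$ in (\ref{eq:epsilon-choice}) (the only effect is to rescale the constant in (\ref{eq:what-alpha-is})) or by simple relabelling. The proof presents no serious obstacle: all the substantive work has been done by the sandwich (\ref{eq:sandwich-log-terms}) together with the freeness of $\Lambda'$ and $\Gamma'$, and what remains is mere bookkeeping.
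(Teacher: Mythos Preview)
Your proposal is correct and follows essentially the same route as the paper's proof: expand both $M$ and $N$, match terms via the bijection (\ref{eq:gamma-lambda-matching}), and apply the sandwich (\ref{eq:sandwich-log-terms}) to each pair. In fact, the paper's own proof also arrives at $2\epsilon$ rather than $\epsilon$ and simply writes ``from which the result follows,'' so your explicit remark about absorbing the factor of $2$ is, if anything, more careful than the original.
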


\begin{proof}
We write out

\[
N(y,a')=\sum_{\g^{(2)}\in\Gamma'\cup\{e\}}\mathbf{1}\{\log(\gamma^{(2)}.y(\lambda'))_{n}-\log y(\lambda'){}_{n}\leq a'\}
\]
and compare to

\[
M(\lambda'z^{(0)},a-\tau_{\star}^{L}(\lambda'))=\sum_{\lambda^{(2)}\in\Lambda'\cup\{e\}}\mathbf{1}\left\{ \log\log(\lambda^{(2)}\lambda'z^{(0)})_{n})-\log\log(\lambda'z^{(0)})_{n}\leq a-\tau_{\star}^{L}(\lambda')\right\} 
\]
term by term, matching $\g^{(2)}$ with $\lambda^{(2)}$ as in (\ref{eq:gamma-lambda-matching}).
By (\ref{eq:sandwich-log-terms}) we have

\[
\log(\gamma^{(2)}.y(\lambda'))_{n}-\log y(\lambda'){}_{n}\leq\log\log(\lambda^{(2)}\lambda'z^{(0)})_{n}-\log\log(\lambda'z^{(0)})_{n}+2 \epsilon
\]
and
\[
\log\log(\lambda^{(2)}\lambda'z^{(0)})_{n})-\log\log(\lambda'z^{(0)})_{n}-2 \epsilon\leq\log(\gamma^{(2)}.y(\lambda'))_{n}-\log y(\lambda'){}_{n}
\]
from which the result follows.
\end{proof}

\subsection{Using the linear semigroup count to prove Proposition \ref{prop:orbital-counting-Lambda'-main-prop}}

We now use Theorem \ref{thm:-linear-counting}, whose proof will be
deferred to Section \ref{sec:linear-semigroup-count}. Let $y'=y(\lambda')=f(\lambda'z^{(0)}).$
\begin{lem}
\label{lem:Mlambda-a-preparation}Let $\delta$ be the small constant
from (\ref{eq:delta-definition}). We have

\begin{eqnarray*}
M(\lambda z^{(0)},a) & = & \left(1+o(1)\right)e^{\beta a}\sum_{\lambda':s^{L}(\lambda')=\lambda}e^{-\beta\tau_{\star}^{L}(\lambda')}h(y')\\
 & + & O\left(\exp(\beta a^{\delta}+(1+\delta)a)\right).
\end{eqnarray*}
The big and small $o$ terms have implied constant and decay rates
that are independent of $\lambda z^{(0)}.$\end{lem}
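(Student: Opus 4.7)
The plan is to start from the iterated renewal identity (\ref{eq:M-iterated-expression}), apply the sandwich Lemma \ref{lem:sandwich-N-M} to each summand, and then invoke Theorem \ref{thm:-linear-counting} to replace each $N(y',a'')$ by $h(y') e^{\beta a''}(1+o(1))$. The subtlety is that Theorem \ref{thm:-linear-counting} only gives an asymptotic as $a''\to\infty$, so it cannot be applied uniformly to every term: when $\tau_\star^L(\lambda')$ is comparable to $a$, the residual $a''=a-\tau_\star^L(\lambda')$ may be very small.

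To deal with this I would split the sum over $\{\lambda':s^L(\lambda')=\lambda\}$ at the threshold $T=a-a^\delta$. For the \emph{main range} $\tau_\star^L(\lambda')\le T$ one has $a-\tau_\star^L(\lambda')\ge a^\delta\to\infty$, and since the perturbation $\epsilon=\epsilon(a)$ from (\ref{eq:epsilon-choice}) decays super-exponentially in $a$ while $a^\delta\to\infty$, combining Lemma \ref{lem:sandwich-N-M} with Theorem \ref{thm:-linear-counting} yields
\[
M(\lambda'z^{(0)},a-\tau_\star^L(\lambda'))=h(y')e^{\beta(a-\tau_\star^L(\lambda'))}(1+o(1)),
\]
with the $o(1)$ rate uniform in $\lambda'$ (this is where the uniformity-in-$y$ clause of Theorem \ref{thm:-linear-counting} and the uniform boundedness of $h$ are used). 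Summing produces $(1+o(1))e^{\beta a}\sum_{\tau_\star^L(\lambda')\le T}h(y')e^{-\beta\tau_\star^L(\lambda')}$.

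For the \emph{tail range} $\tau_\star^L(\lambda')>T$, monotonicity of $N$ in $a$ together with the global upper bound $N(y,a)\ll h(y)e^{\beta a}$ from Theorem \ref{thm:-linear-counting} give $M(\lambda'z^{(0)},a-\tau_\star^L(\lambda'))\ll e^{\beta a^\delta}$ uniformly, and Lemma \ref{lem:iterated-error-term-small} bounds the number of such $\lambda'$ by $c_1^L(c_0+a)^L e^a\le e^{(1+\delta)a}$, for a total contribution of $O(e^{\beta a^\delta+(1+\delta)a})$. It remains to replace $\sum_{\tau_\star^L(\lambda')\le T}$ by the full sum $\sum_{s^L(\lambda')=\lambda}$ in the main term; the discrepancy is $e^{\beta a}\sum_{\tau_\star^L(\lambda')>T}h(y')e^{-\beta\tau_\star^L(\lambda')}$, which by Abel summation against the counting function $N_L(t)=|\{\lambda':s^L(\lambda')=\lambda,\,\tau_\star^L(\lambda')\le t\}|\le e^{(1+\delta)t}$ (again from Lemma \ref{lem:iterated-error-term-small}, valid for $t\ge a/2$) is bounded by $Ce^{\beta a-(\beta-1-\delta)T}=Ce^{(1+\delta)a+(\beta-1-\delta)a^\delta}$. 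Here the choice (\ref{eq:delta-definition}) of $\delta$ together with Remark \ref{rem:betabig} ensures $\beta>1+\delta$, so the geometric series converges and this tail is absorbed into $O(e^{\beta a^\delta+(1+\delta)a})$. Finally the $O(e^{(1+\delta)a})$ error from (\ref{eq:M-iterated-expression}) itself is negligible against the same error.

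The main obstacle, as usual in this kind of renewal argument, is the careful bookkeeping needed to ensure all error terms are uniform in the basepoint $\lambda z^{(0)}$. This uniformity propagates because (i) the $o(1)$ in Theorem \ref{thm:-linear-counting} is declared independent of $y$, (ii) $h$ is uniformly bounded on $\H$, and (iii) the combinatorial bounds of Lemma \ref{lem:iterated-error-term-small} depend only on $n$. The only place where $\lambda z^{(0)}$ enters nontrivially is through $y'=f(\lambda'z^{(0)})$ inside the summand; since $h(y')$ is bounded independently of $\lambda'$, the entire argument proceeds uniformly.
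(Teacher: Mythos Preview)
Your proposal is correct and follows essentially the same approach as the paper: split at the threshold $a-a^\delta$, use Theorem~\ref{thm:-linear-counting} on the main range, and control the tail with the combinatorial bound of Lemma~\ref{lem:iterated-error-term-small}. The only cosmetic differences are that the paper keeps the full sum over $\{\lambda':s^L(\lambda')=\lambda\}$ throughout (absorbing the tail into a separate additive error rather than first truncating and then adding back), and uses a dyadic decomposition over $M-1\le\tau_\star^L(\lambda')\le M$ instead of Abel summation; both yield the same bound $\exp((1+\delta)a+(\beta-1-\delta)a^\delta)\le\exp((1+\delta)a+\beta a^\delta)$ for the tail.
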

\begin{proof}
Using Lemma \ref{lem:sandwich-N-M} in the expression (\ref{eq:M-iterated-expression})
gives that up to a negligible $O(e^{(1+\delta)a}),$

\begin{equation}
\sum_{\lambda':s^{L}(\lambda')=\lambda}N(y(\lambda'),a-\e-\tau_{\star}(\lambda'))\leq M(\lambda z^{(0)},a)\leq\sum_{\lambda':s^{L}(\lambda')=\lambda}N(y(\lambda'),a+\e-\tau_{\star}(\lambda'))\label{eq:big-sandwich}
\end{equation}
where $y(\lambda')=f(\lambda'z^{(0}).$

We want to carefully use Theorem \ref{thm:-linear-counting} that
says that along with $h,\beta$ there is some function $F(a)$ such
that

\[
|N(y,a)-e^{\beta a}h(y)|\leq F(a)e^{\beta a}h(y)
\]
and $F(a)\to0$ as $a\to\infty.$ The minor problem with using this
in (\ref{eq:big-sandwich}) is that there may be terms with $a'=a\pm\epsilon-\tau_{\star}(\lambda')$
close to zero, or less than zero. Letting $\delta$ be the same small
parameter as before, we note that if $a'\leq a^{\delta}$ then there
is some constant $C_{3}\geq1$ such that

\[
|N(y,a')-e^{\beta a'}h(y)|\leq C_{3}e^{\beta a'}
\]
which follows from Theorem \ref{thm:-linear-counting} when $0\leq a'\leq a^{\delta}$
and is trivial when $a'<0$ since then $N(y,a')=0.$

Therefore, working with the right hand inequality of (\ref{eq:big-sandwich})
we get 

\begin{eqnarray*}
M(\lambda z^{(0)},a) & \leq & \sum_{\lambda':s^{L}(\lambda')=\lambda}\left(e^{\beta a'}h(y')+\mathbf{1}\{a'\leq a^{\delta}\}C_{3}e^{\beta a'}+\mathbf{1}\{a'>a^{\delta}\}F(a')e^{\beta a'}h(y')\right)
\end{eqnarray*}
where we write $a'=a'(\lambda')=a+\epsilon-\tau_{\star}^{L}(\lambda')$
and $y'=y(\lambda').$ Therefore

\begin{eqnarray}
M(\lambda z^{(0)},a) & \leq & \left(1+\sup_{b\geq a^{\delta}}F(b)\right)\sum_{\lambda':s^{L}(\lambda')=\lambda}e^{\beta a'}h(y')+C_{3}\sum_{\begin{subarray}{c}
\lambda':s^{L}(\lambda')=\lambda\\
a'\leq a^{\delta}
\end{subarray}}e^{\beta a'}.\label{eq:temp1}
\end{eqnarray}
For the first term in (\ref{eq:temp1}) note that

\begin{eqnarray*}
\sum_{\lambda':s^{L}(\lambda')=\lambda}e^{\beta a'}h(y') & = & e^{\beta a}\sum_{\lambda':s^{L}(\lambda')=\lambda}e^{\beta\epsilon}e^{-\beta\tau_{\star}^{L}(\lambda')}h(y')\\
 & = & (1+O(\exp(-2C\phi^{\frac{ca}{\log a}})))e^{\beta a}\sum_{\lambda':s^{L}(\lambda')=\lambda}e^{-\beta\tau_{\star}^{L}(\lambda')}h(y').
\end{eqnarray*}
The last term in (\ref{eq:temp1}) can be bounded by

\[
\ll e^{\beta a}\sum_{\begin{subarray}{c}
\lambda':s^{L}(\lambda')=\lambda\\
\tau_{\star}^{L}(\lambda')\geq a+\epsilon-a^{\delta}
\end{subarray}}e^{-\beta\tau_{\star}^{L}(\lambda')}.
\]
The contributions to the sum above from $M-1\leq\tau_{\star}^{L}(\lambda')\leq M$
are bounded by 

\[
\sum_{\lambda:s^{L}(\lambda')=\lambda}\mathbf{1}\{M\geq\tau_{\star}^{L}(\lambda')\geq M-1\}e^{-\beta\tau_{\star}^{L}(\lambda')}\leq c_{1}^{L}(c_{0}+M)^{L}e^{M}e^{-\beta(M-1)}
\]
by Lemma \ref{lem:iterated-error-term-small}, equation (\ref{eq:combinatorial-bound-for-tau-star-leq-x}).
Summing this quantity over natural numbers from $M_{0}=\lfloor a-a^{\delta}-1\rfloor$
to infinity, using the bound (\ref{eq:bound-for-IBP}) to replace $c_{1}^{L}(c_{0}+M)^{L}$
by $e^{\delta M}$, gives
\[
\sum_{\begin{subarray}{c}
\lambda':s^{L}(\lambda')=\lambda\\
\tau_{\star}^{L}(\lambda')\geq a+\epsilon-a^{\delta}
\end{subarray}}e^{-\beta\tau_{\star}^{L}(\lambda')}\ll e^{-(\beta-1-\delta)(a-a^{\delta})};
\]
so we get for the last term in (\ref{eq:temp1})

\[
\sum_{\begin{subarray}{c}
\lambda':s^{L}(\lambda')=\lambda\\
a'\leq a^{\delta}
\end{subarray}}e^{\beta a'}\ll\exp((\beta-1-\delta)a^{\delta}+(1+\delta)a).
\]
Therefore it can be absorbed into the error stated in the lemma. The
lower bound for $M(\lambda z^{(0)},a)$ is similar. Notice that our
constants and rates of decay do not depend on $\lambda z^{(0)}.$
\end{proof}
Proposition \ref{prop:orbital-counting-Lambda'-main-prop} will now
follow from Lemma \ref{lem:Mlambda-a-preparation} and the following
proposition.
\begin{prop}
\label{prop:Cauchy}For fixed $\lambda$ and $z^{(0)}$ there is a
constant $c_{\star}(\lambda z^{(0)})$ such that

\[
a_{L}(\lambda z^{(0)})=\sum_{\lambda':s^{L}(\lambda')=\lambda}h(y(\lambda'))e^{-\beta\tau_{\star}^{L}(\lambda')}=c_{\star}(\lambda z^{(0)})+o(1)
\]
as $L\to\infty,$ with a rate of decay that is independent of $\lambda$.
The values $c_{\star}(\lambda z^{(0)})$ are bounded by some constant
independent of $\lambda.$
\end{prop}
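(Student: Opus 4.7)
My plan is to prove convergence by showing that $(a_L(\lambda z^{(0)}))_L$ is Cauchy with an error bound that is summable in $L$ and independent of $\lambda$, and then identifying the limit. The strategy is to compare $a_{L+1}$ to $a_L$ by applying the conformal recursion \eqref{eq:h-recursion} satisfied by $h$ at each point $y(\lambda')$ with $s^L(\lambda') = \lambda$. Using the natural bijection $\gamma_\Lambda = \lambda_{n-1}^A\lambda_j \longleftrightarrow \gamma = \gamma_{n-1}^A\gamma_j$ between $S_\Lambda$ and $T_\Gamma$, the recursion rewrites $h(y(\lambda'))$ as a sum over the extensions $\gamma_\Lambda\lambda'$, \emph{but expressed in linear coordinates} $\gamma.y(\lambda')$ and linear distortions $\log((\gamma.y(\lambda'))_n/y(\lambda')_n)$. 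The idea is to replace both of these by their nonlinear analogues $y(\gamma_\Lambda\lambda')$ and $\tau_\star(\gamma_\Lambda\lambda')$, incurring only a small error, and thereby obtain an approximate equality between $a_L$ and $a_{L+1}$.

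Concretely, fix $\lambda'$ with $s^L(\lambda') = \lambda$ and set $\epsilon_L := \alpha(\lambda' z^{(0)})^{-2}$; by Lemma~\ref{lem:growth-of-alpha-in-L} we have $\epsilon_L \ll \exp(-2C\phi^L)$, super-exponentially small and uniform in $\lambda$. Iterating Lemma~\ref{lem:boundingygrowth} through the decomposition $\gamma_\Lambda = \lambda_{n-1}^A\lambda_j$ (noting $\alpha$ is non-decreasing under $\Lambda$, so the hypothesis persists), we get $y(\gamma_\Lambda\lambda') \leq \gamma.y(\lambda')$ with coordinatewise gap $O(\epsilon_L)$ at the last coordinate, and \eqref{eq:sandwich-log-terms} (applied on one step) yields
\[
\bigl|\tau_\star(\gamma_\Lambda\lambda') - \log\tfrac{(\gamma.y(\lambda'))_n}{y(\lambda')_n}\bigr| \leq 2\epsilon_L.
\]
Combining these with the $C^1$-regularity of $h$ from Theorem~\ref{thm:-linear-counting} gives
\[
\Bigl(\tfrac{(\gamma.y(\lambda'))_n}{y(\lambda')_n}\Bigr)^{-\beta} h(\gamma.y(\lambda')) = e^{-\beta\tau_\star(\gamma_\Lambda\lambda')}\, h(y(\gamma_\Lambda\lambda'))\,(1 + O(\epsilon_L)),
\]
where the implied constant depends only on $\beta$, $\|h\|_{C^1}$, and $\min h > 0$. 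Plugging this into \eqref{eq:h-recursion} at $y = y(\lambda')$, multiplying through by $e^{-\beta\tau_\star^L(\lambda')}$, and summing over $\lambda'$ with $s^L(\lambda') = \lambda$ gives
\[
a_L(\lambda z^{(0)}) = (1 + O(\epsilon_L))\,a_{L+1}(\lambda z^{(0)}).
\]

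To conclude, note that $a_0(\lambda z^{(0)}) = h(y(\lambda)) \leq \|h\|_\infty$ gives a uniform starting bound, and by induction the telescoping product shows $a_L(\lambda z^{(0)}) \leq \|h\|_\infty\prod_{l\geq 0}(1+O(\epsilon_l))$, uniformly in $\lambda$. The super-exponential decay of $\epsilon_L$ makes this product absolutely convergent, so $(a_L(\lambda z^{(0)}))_L$ is Cauchy with $|a_M - a_L| \ll \sum_{l\geq L}\epsilon_l \|h\|_\infty$, a tail that depends only on $L$. Calling the limit $c_\star(\lambda z^{(0)})$, this gives the claimed asymptotic and the uniform upper bound simultaneously.

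The main obstacle will be the careful legitimization of two points: first, justifying termwise approximation inside the \emph{infinite} sum over $T_\Gamma$ with a \emph{uniform} multiplicative error (requiring that $\sum_{\gamma\in T_\Gamma}((\gamma.y)_n/y_n)^{-\beta}$ is bounded, which follows from the recursion together with $h$ being bounded below and above); and second, ensuring the $C^1$-Lipschitz comparison of $h$ at the points $\gamma.y(\lambda')$ and $y(\gamma_\Lambda\lambda')$ respects the scale-invariance of $h$ on $\H$, so that an $\ell^\infty$-small perturbation of the argument yields a correspondingly small change in $h$.
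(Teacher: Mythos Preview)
Your approach is essentially the paper's: show the sequence is Cauchy by comparing $a_{L+1}$ to $a_L$ via the recursion \eqref{eq:h-recursion} for $h$, using the linear--nonlinear comparison lemmas and the super-exponential decay of $\alpha(\lambda'z^{(0)})^{-2}$ from Lemma~\ref{lem:growth-of-alpha-in-L}. Your multiplicative packaging $a_L = (1+O(\epsilon_L))\,a_{L+1}$ is a mild variation that yields the uniform bound on $a_L$ directly via a telescoping product, whereas the paper separately bounds $\sum_{\lambda'} e^{-\beta\tau_\star^L(\lambda')}$ by the crude estimate of Lemma~\ref{lem:iterated-error-term-small} (obtaining $\exp(C_4 L^{1+\eta})$) and then beats it with $\exp(-2C\phi^L)$. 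Your explicit use of the $\R_+$-invariance of $h$ to make the Lipschitz comparison uniform over $\gamma\in T_\Gamma$ is the right way to read that step.

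There is one small gap. The approximation machinery (Lemmas~\ref{lem:Log-vs-f} and~\ref{lem:boundingygrowth}) requires $\alpha(\lambda'z^{(0)}) > C_1$, and Lemma~\ref{lem:growth-of-alpha-in-L} only guarantees this once $L\geq L_0$ for some fixed $L_0$ depending only on $n,a,k$. So your multiplicative relation, and hence the telescoping product, is only justified for $l\geq L_0$; you cannot anchor at $a_0=h(y(\lambda))$. The paper handles this explicitly: it starts the Cauchy estimate at $L_0$ and bounds $a_{L_0}\leq \|h\|_\infty \sum_{\lambda':\,s^{L_0}(\lambda')=\lambda} e^{-\beta\tau_\star^{L_0}(\lambda')}$ uniformly in $\lambda$ via Lemma~\ref{lem:iterated-error-term-small} with $L=L_0$ fixed. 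With that adjustment your argument goes through unchanged.
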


\begin{proof}
We are going to prove the sequence is Cauchy with a very fast rate.
Consider the difference of consecutive terms. Again we write $y'=y(\lambda').$
For $\lambda''\in S_{\Lambda}$ we write $y''=y''(\lambda'',\lambda')=f(\lambda''\lambda'z^{(0)})$.
We suppress the dependence of these variables on others to improve
readability.

We obtain

\begin{eqnarray}
a_{L+1}-a_{L} & = & \sum_{\lambda^{(2)}:s^{L+1}(\lambda^{(2)})=\lambda}h(y'')e^{-\beta\tau_{\star}^{L+1}(\lambda^{(2)})}-\sum_{\lambda':s^{L}(\lambda')=\lambda}h(y')e^{-\beta\tau_{\star}^{L}(\lambda')}\nonumber \\
 & = & \sum_{\lambda':s^{L}(\lambda')=\lambda}e^{-\beta\tau_{\star}^{L}(\lambda')}\left(\left(\sum_{\lambda''\in S_{\Lambda}}h(y'')e^{-\beta(\tau_{\star}^{L+1}(\lambda''\lambda')-\tau_{\star}^{L}(\lambda'))}\right)-h(y')\right)\nonumber \\
 & = & \sum_{\lambda':s^{L}(\lambda')=\lambda}e^{-\beta\tau_{\star}^{L}(\lambda')}\left(\left(\sum_{\lambda''\in S_{\Lambda}}h(y'')e^{-\beta\tau_{\star}(\lambda''\lambda')}\right)-h(y')\right)\nonumber \\
 & = & \sum_{\lambda':s^{L}(\lambda')=\lambda}e^{-\beta\tau_{\star}^{L}(\lambda')}\left(\left(\sum_{\lambda''\in S_{\Lambda}}h(y'')\left(\frac{\log(\lambda'z^{(0)})_{n}}{\log(\lambda''\lambda'z^{(0)})_{n}}\right)^{\beta}\right)-h(y')\right).\label{eq:difference-form}
\end{eqnarray}
The point is that the terms in parentheses should be close to zero
by the recursion (\ref{eq:h-recursion}) satisfied by $h$ over $\Gamma'.$
We will use Lemma \ref{lem:Log-vs-f} which gives a bound when $\alpha(\lambda'z^{(0)})>C_{1}$.
On the other hand by Lemma \ref{lem:growth-of-alpha-in-L} there is
some $L_{0}$ such that when $L\geq L_{0}$ and $s^{L}(\lambda')=\lambda$
then $\alpha(\lambda'z^{(0)})>C_{1}$.

We use the natural bijection

\[
S_{\Lambda}\to T_{\Gamma},\quad\lambda''\mapsto\gamma(\lambda'').
\]
When $L>L_{0},$ repeating the arguments of the previous section leading
up to (\ref{eq:sandwich-log-terms}) gives the bounds

\begin{equation}
\log(\lambda'z^{(0)})_{n}\leq y'_{n}\leq(1+O(\alpha(\lambda'z^{(0)})^{-2}))\log(\lambda'z^{(0)})_{n}\label{eq:temp2}
\end{equation}

\begin{equation}
\log(\lambda''\lambda'z^{(0)})_{n}\leq(\gamma(\lambda'').y')_{n}\leq(1+O(\alpha(\lambda'z^{(0)})^{-2}))\log(\lambda''\lambda'z^{(0)})_{n}\label{eq:temp3}
\end{equation}
where the implied constant depends only on $n.$ Moreover, using Lemma
\ref{lem:Log-vs-f} gives 

\begin{equation}
\log(\lambda''\lambda'z^{(0)})\leq y''\leq\log(\lambda''\lambda'z^{(0)})+C_{2}\alpha(\lambda'z^{(0)})^{-2}(0,0,\ldots,0,1)\label{eq:temp4}
\end{equation}
whenever $L>L_{0}.$

Suppose $L>L_{0}.$ We must estimate the cost of replacing $y''$
by $\gamma(\lambda'')y'$ and $\left(\frac{\log(\lambda'z^{(0)})_{n}}{\log(\lambda''\lambda'z^{(0)})_{n}}\right)^{\beta}$
by $\left(\frac{y'_{n}}{(\gamma(\lambda'')y')_{n}}\right)^{\beta}$
in (\ref{eq:difference-form}). Since using (\ref{eq:temp3}) and
(\ref{eq:temp4}) gives that $y''$ is within\\
 $O(\alpha(\lambda'z^{(0)})^{-2}\log(\lambda''\lambda'z^{(0)})_{n})$
of $\gamma(\lambda'').y'$ and $h$ is $C^{1},$ we get

\[
h(y'')=h(\gamma(\lambda'')y')+O(\alpha(\lambda'z^{(0)})^{-2}\log(\lambda''\lambda'z^{(0)})_{n}).
\]
Using (\ref{eq:temp2}) and (\ref{eq:temp3}) gives

\[
\left(\frac{y'_{n}}{(\gamma(\lambda'')y')_{n}}\right)^{\beta}(1+O(\alpha(\lambda'z^{(0)})^{-2}))^{-\beta}\leq\left(\frac{\log(\lambda'z^{(0)})_{n}}{\log(\lambda''\lambda'z^{(0)})_{n}}\right)^{\beta}\leq\left(\frac{y'_{n}}{(\gamma(\lambda'')y')_{n}}\right)^{\beta}(1+O(\alpha(\lambda'z^{(0)})^{-2}))^{\beta}.
\]
Using that $h$ and $\left(\frac{\log(\lambda'z^{(0)})_{n}}{\log(\lambda''\lambda'z^{(0)})_{n}}\right)^{\beta},$
$\left(\frac{y'_{n}}{(\gamma(\lambda'')y')_{n}}\right)^{\beta}$ are
bounded we get

\begin{eqnarray*}
\sum_{\lambda''\in S_{\Lambda}}h(y'')\left(\frac{\log(\lambda'z^{(0)})_{n}}{\log(\lambda''\lambda'z^{(0)})_{n}}\right)^{\beta} & = & \sum_{\gamma''\in T_{\Gamma}}h(\gamma(\lambda'')y')\left(\frac{y'_{n}}{(\gamma(\lambda'')y')_{n}}\right)^{\beta}+O(\alpha(\lambda'z^{(0)})^{-2})\\
 & = & h(y')+O(\alpha(\lambda'z^{(0)})^{-2})
\end{eqnarray*}
where the last equality uses the recursion (\ref{eq:h-recursion}).
Therefore for $L\geq L_{0}$

\[
|a_{L+1}-a_{L}|\ll\alpha(\lambda'z^{(0)})^{-2}\sum_{\lambda':s^{L}(\lambda')=\lambda}e^{-\beta\tau_{\star}^{L}(\lambda')}.
\]

It is possible to use a fortiori estimates to prove the sum above
is universally bounded, for example by using the work of Baragar \cite{BARAGAR2}
in the case of $k=0.$ To keep things self contained, since we only
need a coarse bound we instead use Lemma \ref{lem:iterated-error-term-small}
to prove

\begin{equation}
\sum_{\lambda':s^{L}(\lambda')=\lambda}e^{-\beta\tau_{\star}^{L}(\lambda')}\ll\exp(C_{4}L^{1+\eta})\label{eq:fixed-L-exp-minusbeta-sum}
\end{equation}
for some constant $C_{4}$ and small $\eta$. However, $\alpha(\lambda'z^{(0)})^{-2}$
is much smaller than this: by Lemma \ref{lem:growth-of-alpha-in-L}
we have $\alpha(\lambda'z^{(0)})^{-2}\ll\exp(-2C\phi^{L})$ where
$\phi>1$ so not only is 
\[
|a_{L+1}-a_{L}|\ll\exp(C_{4}L^{1+\eta}-2C\phi^{L})
\]
very small but we can sum the differences to get a Cauchy sequence.
Indeed $C_{4}L^{1+\eta}-2C\phi^{L}\leq C_{5}-C_{6}\phi^{L}$ for some
$C_{5},C_{6}>0.$ Therefore for $L_{1}\geq L_{0}$

\begin{equation}
\sum_{L=L_{1}}^{\infty}|a_{L+1}-a_{L}|\ll\sum_{L=L_{1}}^{\infty}\exp(-C_{6}\phi^{L})=o_{L_{1}\to\infty}(1)\label{eq:fast-Cauchy}
\end{equation}
so the sequence converges at a uniform rate to its limit $c_{\star}(\lambda z^{(0)})$.
The uniform boundedness of $c_{\star}(\lambda z^{(0)})$ will follow
from the uniform boundedness of $a_{L_{0}}(\lambda z^{(0)})$ given
(\ref{eq:fast-Cauchy}), and $a_{L_{0}}(\lambda z^{(0)})$ is uniformly
bounded by using that $h$ is bounded and the already established
(\ref{eq:fixed-L-exp-minusbeta-sum}). This finishes the proof.
\end{proof}

Putting Proposition \ref{prop:Cauchy} and Lemma \ref{lem:Mlambda-a-preparation}
together proves Proposition \ref{prop:orbital-counting-Lambda'-main-prop}
given Theorem \ref{thm:-linear-counting}. In the rest of the paper
we prove Theorem \ref{thm:-linear-counting}.

\section{The linear semigroup count}

\label{sec:linear-semigroup-count}

\subsection{Renewal (again)}

Now we discuss renewal for the quantity $N(y,a)$ that appears in
Theorem \ref{thm:-linear-counting}. The renewal equation for $N(y,a)$
says 

\begin{equation}
N(y,a)=\sum_{\gamma\in T_{\Gamma}}N(\gamma.y,a-\log(\gamma.y)_{n}+\log y_{n})+\mathbf{1}\{0\leq a\}.\label{eq:N-renewal}
\end{equation}

Notice from its Definition in (\ref{eq:N-of-y-a-definition}) that
the function $N(y,a)$ is invariant under multiplication of the $y$
variable by $\R_{+}$. With this in mind, we are going to consider

\[
P(\R_{\geq0}^{n})=\R_{\geq0}^{n}/\R_{+},
\]
the quotient of $\R_{\geq0}^{n}$ by the multiplicative action of
positive real numbers. Let $\Delta\subset P(\R_{\geq0}^{n})$ denote
the projection of $\H.$ We will from now on use a coordinate

\[
w=(w_{1},w_{2},\ldots,w_{n-1},1)
\]
with $w_{1}\leq w_{2}\leq\ldots\leq w_{n-1}$ and $\sum_{j=1}^{n-1}w_{j}=1$
to uniquely represent a point in $\Delta.$ We now view $N(w,a)$
as a function on $\Delta\times\R_{\geq0}.$ Note that equation (\ref{eq:N-renewal})
descends to $(w,a)\in\Delta\times\R_{\geq0}.$

Now, for the first time in the paper, we start the full argument of
the renewal method\footnote{Previously we just used an iteration of a renewal equation to perform
a linearization.}. This begins with taking a Laplace transform which we define for
general $f$ of suitable decay by 

\[
\hat{f}(s)=\int_{-\infty}^{\infty}e^{-sx}f(x)dx.
\]
The outcome of taking a Laplace transform of the renewal equation
(\ref{eq:N-renewal}) in the $a$ variable, ignoring issues of convergence\footnote{These issues are worked out in Lemma \ref{lem:transferwelldefinedoncontinuous}.},
is that

\begin{equation}
\hat{N}(w,s)=\sum_{\gamma\in T_{\Gamma}}\left(\frac{w_{n}}{(\gamma.w)_{n}}\right)^{s}\hat{N}(\gamma.w,s)+\frac{1}{s}\label{eq:fourier-transform-of-renewal}
\end{equation}
for all $w\in\Delta,$ where $\hat{N}(w,s)$ is the Laplace transform
$\widehat{N(w,\bullet)}$ in the $a$ variable. Thus $s$ is a frequency
parameter dual to the counting parameter $a.$ Notice that the function

\[
(\gamma,w)\mapsto\frac{w_{n}}{(\gamma.w)_{n}}
\]
 descends from $T_{\Gamma}\times\H$ to a well defined real valued
function on $T_{\Gamma}\times\mbox{\ensuremath{\Delta}.}$

Now we introduce the transfer operator that will play a crucial role in this section. For a function
$f$ on $\Delta$ we define

\begin{equation}
\L_{s}[f](w)=\sum_{\gamma\in T_{\Gamma}}\left(\frac{w_{n}}{(\gamma.w)_{n}}\right)^{s}f(\gamma.w)\label{eq:transfer-definition}
\end{equation}
whenever the sum is pointwise absolutely convergent on $\Delta$.
Then (\ref{eq:fourier-transform-of-renewal}) can be rephrased as

\begin{equation}
\hat{N}(\bullet,s)=s^{-1}(1-\L_{s})^{-1}\mathbf{1},\label{eq:resolvent-formula}
\end{equation}
whenever the resolvent operator $(1-\L_{s})^{-1}$ exists in such
a way it can act on the constant function $\mathbf{1}.$ 

There is a procedure due to Lalley to convert (\ref{eq:resolvent-formula})
together with a sufficiently complete description of the spectrum
of $\L_{s}$ on a suitable Banach space into Theorem \ref{thm:-linear-counting}.
More specifically we will appeal to the perturbation theory and Fourier
analysis developed in \cite[Sections 7 and 8]{LALLEY}. In the next
section we will lay out the necessary spectral theory of $\L_{s}.$
Before that, let us calculate explicitly the sum in (\ref{eq:transfer-definition}).
\begin{lem}
\label{lem:branchcalcs}An element $\gamma_{n-1}^{A}\gamma_{j}$ of
$T_{\Gamma}$ acts on $\Delta$ by 

\begin{eqnarray}
\gamma_{n-1}^{A}\gamma_{j}.[w_{1},\ldots,w_{n-1},1]\nonumber \\
= & [w_{1},\ldots,\widehat{w_{j}},\ldots,w_{n-1},1+A(1-w_{j}),1+(A+1)(1-w_{j})];\label{eq:longformula-1}
\end{eqnarray}
in particular,
\begin{equation}
(\gamma_{n-1}^{A}\gamma_{j}.(w_{1},\ldots w_{n-1},1))_{n}=1+(A+1)(1-w_{j}).\label{eq:new-nth-coefficient}
\end{equation}
\end{lem}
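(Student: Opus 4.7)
The plan is a direct unwinding of the definition \eqref{eq:gamma-generators-defn}: apply $\gamma_j$ once, observe that the first $n-2$ coordinates are then frozen under all subsequent applications of $\gamma_{n-1}$, and read off the last two coordinates from a scalar recursion. The lemma is essentially bookkeeping, so no clever input is needed; the only bit of care concerns keeping the projective normalization straight.

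First, I would apply $\gamma_j$ to the standard representative $(w_1, \ldots, w_{n-1}, 1)$ of a point in $\Delta$. Since $\sum_{i=1}^{n-1} w_i = 1$ by our choice of representative, one has $\sum_{i \neq j} w_i = 2 - w_j$, producing the homogeneous tuple
\[
(w_1, \ldots, \widehat{w_j}, \ldots, w_{n-1}, 1, 2 - w_j),
\]
whose first $n-2$ entries sum to $\sigma := 1 - w_j$ and are subsequently left untouched by any further application of $\gamma_{n-1}$, since $\gamma_{n-1}$ only modifies the $(n-1)$st and $n$th coordinates.

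Second, I would set up a recursion for those remaining two entries. Writing $(U_A, V_A)$ for the $(n-1)$st and $n$th coordinates after $A$ further applications of $\gamma_{n-1}$, the definition gives $(U_{A+1}, V_{A+1}) = (V_A,\, V_A + \sigma)$ with initial data $(U_0, V_0) = (1, 1 + \sigma)$. The affine recursion $V_{A+1} = V_A + \sigma$ solves immediately as $V_A = 1 + (A+1)\sigma$, and then $U_A = V_{A-1} = 1 + A\sigma$ for $A \geq 1$, with $U_0 = 1$ also matching this formula at $A = 0$. Substituting $\sigma = 1 - w_j$ gives \eqref{eq:longformula-1}, and \eqref{eq:new-nth-coefficient} is just its final coordinate.

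I do not foresee any real obstacle. The only point meriting care is that the right hand side of \eqref{eq:longformula-1} is presented in homogeneous coordinates rather than in the $\Delta$-normalized form (where the first $n-1$ coordinates sum to the last); since the claim is an identity in the projective space $P(\R_{\geq 0}^n)$, no renormalization step at each iteration is required.
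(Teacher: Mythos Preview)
Your proof is correct and is precisely the direct calculation the paper alludes to but omits; there is no alternative approach to compare.
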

\begin{proof}
This is a direct calculation.\end{proof}
\begin{example}[Gauss map]
\label{exa:Gaussexample1}When $n=3,$ the only inverse branches
are of the form

\[
\g_{2}^{A}\g_{1}(w_{1},w_{2})=(1+(A+1)w_{2})^{-1}(w_{2},1+Aw_{2}).
\]
With the change of variables $x=w_{1}/w_{2},$ these are precisely
the inverse branches of the Gauss map $x\mapsto\{\frac{1}{x}\}$:

\[
\g_{2}^{A}\g_{1}:x\mapsto\frac{1}{x+A+1},\quad A\in\Z_{\geq0}.
\]

\end{example}

\subsection{Spectral theory of the transfer operator}

\label{sub:spectral-theory-transfer-operator}

In this section, we give a full account of the spectral theory of
$\L_{s}$.
 A good reference for the spectral theory of transfer operators is the book of Baladi \cite{BALADI}.
We begin with the following lemma.
\begin{lem}
\label{lem:transferwelldefinedoncontinuous}When $\Re(s)>1$ the summation
in the defining equation (\ref{eq:transfer-definition}) of $\L_{s}$
is absolutely and uniformly convergent on $\Delta$ and so gives a
well defined continuous map of Banach spaces\footnote{$C^{0}$ is the Banach space of continuous functions with the supremum
norm.}

\[
\L_{s}:C^{0}(\Delta)\to C^{0}(\Delta).
\]
\end{lem}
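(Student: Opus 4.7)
My plan is to convert the abstract absolute-convergence question into a concrete estimate using the explicit formula from Lemma \ref{lem:branchcalcs}. In the coordinates $w = (w_1, \ldots, w_{n-1}, 1)$ chosen to represent $\Delta$, the identity \eqref{eq:new-nth-coefficient} gives
\[
\frac{w_n}{(\gamma_{n-1}^A\gamma_j . w)_n} = \frac{1}{1 + (A+1)(1-w_j)},
\]
so that
\[
\L_s[f](w) = \sum_{j=1}^{n-2}\sum_{A=0}^{\infty} \frac{f(\gamma_{n-1}^A\gamma_j . w)}{\bigl(1 + (A+1)(1-w_j)\bigr)^{s}}.
\]
The task then reduces to a uniform summability estimate on the geometric factor together with a continuity check on each summand.

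The key geometric input is that the constraints defining $\Delta$ (the ordering $w_1\leq\ldots\leq w_{n-1}$ together with $\sum_{i=1}^{n-1}w_i = 1$) force $w_j \leq 1/(n-j)$ for $1\leq j\leq n-1$, because the $n-j$ coordinates $w_j \leq w_{j+1}\leq\ldots\leq w_{n-1}$ are all at least $w_j$ and sum to at most $1$. In particular for the indices $j\in\{1,\ldots,n-2\}$ that actually appear in the sum one gets $w_j \leq 1/2$, hence $1 - w_j \geq 1/2$, uniformly over $w \in \Delta$. Thus each summand is dominated in modulus by
\[
\frac{\|f\|_\infty}{\bigl(1 + (A+1)/2\bigr)^{\Re(s)}},
\]
independent of $w$, and the double series $\sum_{j=1}^{n-2}\sum_{A\geq 0}(1+(A+1)/2)^{-\Re(s)}$ converges precisely when $\Re(s) > 1$ by comparison with the $p$-series. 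This simultaneously yields the uniform bound $\|\L_s[f]\|_\infty \leq C(s)\|f\|_\infty$ and the operator-norm continuity of $\L_s$.

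Finally, for continuity of $\L_s[f]$ itself, I would invoke the Weierstrass $M$-test: each individual term $w \mapsto \bigl(w_n/(\gamma.w)_n\bigr)^s f(\gamma.w)$ is a continuous function on the compact set $\Delta$ (the action of $\gamma$ on $\Delta$ is continuous, the denominator $1 + (A+1)(1-w_j)$ never vanishes, and $f$ is continuous), and the dominating numerical series just exhibited is independent of $w$. Uniform absolute convergence of a series of continuous functions on a compact space therefore produces a continuous limit, so $\L_s$ maps $C^0(\Delta)$ into itself and is bounded. I do not anticipate any real obstacle here: the entire argument hinges on recognising the bound $w_j \leq 1/2$ for $j \leq n-2$, which is immediate from the combinatorics of $\Delta$.
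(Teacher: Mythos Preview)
Your proposal is correct and follows essentially the same approach as the paper: both substitute the explicit formula from Lemma~\ref{lem:branchcalcs}, use the bound $w_j\leq 1/2$ for $j\leq n-2$ to dominate the series uniformly by a convergent $p$-series when $\Re(s)>1$, and conclude continuity and boundedness via uniform convergence. Your write-up is slightly more explicit about the Weierstrass $M$-test and the origin of the bound $w_j\leq 1/2$, but the argument is the same.
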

\begin{proof}
Substituting Lemma \ref{lem:branchcalcs}, equation (\ref{eq:new-nth-coefficient})
in the Definition (\ref{eq:transfer-definition}), the summation amounts
to

\begin{equation}
\L_{s}[f](w)=\sum_{j\in[n-2]}\sum_{A\in\N}\frac{1}{(1+(A+1)(1-w_{j}))^{s}}f(\g_{n-1}^{A}\g_{j}.w).\label{eq:transder-defn-explicit}
\end{equation}
Since $w_{j}\leq1/2$ for $j\in [n-2]$ and $f$ is bounded, each sum in $L$ converges
uniformly absolutely on $\Delta$ for $\Re(s)>1$. The limit is then
continuous and bounded by a constant multiple, depending on $s$,
of $\|f\|_{\infty}.$
\end{proof}

We obtain the following consequence of Lemma \ref{lem:transferwelldefinedoncontinuous}
by a standard application of the Schauder-Tychonoff Theorem.
\begin{cor}[Existence of eigenmeasures]
\label{lem:conformalexistence}Let $\L_{s}^{*}$ denote the dual
of $\L_{s}.$ For each real $s>1$ there is a number $\lambda_{s}>0$
and a probability measure $\nu_{s}$ such that $\L_{s}^{*}\nu_{s}=\lambda_{s}\nu_{s}.$\end{cor}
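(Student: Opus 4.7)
The plan is to apply the Schauder--Tychonoff fixed point theorem to a renormalized version of the adjoint operator $\L_s^*$ acting on the space of Borel probability measures on $\Delta$. Throughout, $s>1$ is fixed so that Lemma \ref{lem:transferwelldefinedoncontinuous} applies and $\L_s$ is a bounded positive operator on $C^0(\Delta)$.

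Since $\Delta$ is a compact Hausdorff space in the coordinates $(w_1,\ldots,w_{n-1})$, the set $\mathcal{P}(\Delta)$ of Borel probability measures on $\Delta$ is a convex subset of the dual Banach space $C^0(\Delta)^{*}$, and by Banach--Alaoglu combined with the Riesz representation theorem it is compact in the weak-$*$ topology; thus it is a compact convex subset of the locally convex space $C^0(\Delta)^{*}$ with the weak-$*$ topology, exactly the setting of Schauder--Tychonoff. Moreover, since every summand in the explicit expression \eqref{eq:transder-defn-explicit} has a positive coefficient, $\L_s$ sends nonnegative continuous functions to nonnegative continuous functions, so by duality $\L_s^{*}$ preserves the cone of positive finite Radon measures.

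I would define $T : \mathcal{P}(\Delta) \to \mathcal{P}(\Delta)$ by
\[
T(\nu) \;=\; \frac{\L_s^{*}\nu}{(\L_s^{*}\nu)(\mathbf{1})}.
\]
This is well defined because $\L_s \mathbf{1}$ is strictly positive on $\Delta$ (already the $j=1,\,A=0$ term in \eqref{eq:transder-defn-explicit} is bounded below by $(3/2)^{-s}$ since $w_j \leq 1/2$), and being continuous on the compact set $\Delta$ it is bounded below by some constant $c=c(s)>0$. Therefore
\[
(\L_s^{*}\nu)(\mathbf{1}) \;=\; \int_\Delta \L_s\mathbf{1}\,d\nu \;\geq\; c
\]
uniformly in $\nu \in \mathcal{P}(\Delta)$. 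Weak-$*$ continuity of $\nu \mapsto \L_s^{*}\nu$ is immediate from Lemma \ref{lem:transferwelldefinedoncontinuous}: for each test function $f \in C^0(\Delta)$ one has $\L_s f \in C^0(\Delta)$, so $\int f\,d(\L_s^{*}\nu) = \int \L_s f\,d\nu$ is weak-$*$ continuous in $\nu$. Combined with the uniform positive lower bound on the denominator, the renormalization $T$ is weak-$*$ continuous.

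Schauder--Tychonoff then yields a fixed point $\nu_s \in \mathcal{P}(\Delta)$, that is $\L_s^{*}\nu_s = \lambda_s \nu_s$ with $\lambda_s := (\L_s^{*}\nu_s)(\mathbf{1}) \geq c > 0$. There is no serious obstacle: the only point requiring care is the uniform positive lower bound on the denominator in $T$, ensuring that the renormalization step is continuous, and this is automatic from the compactness of $\Delta$ together with the strict pointwise positivity of $\L_s\mathbf{1}$.
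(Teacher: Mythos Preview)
Your proof is correct and follows exactly the approach the paper indicates, namely a standard application of Schauder--Tychonoff to the normalized dual action on probability measures. One minor slip: the inequality $w_j\le 1/2$ yields $(2-w_j)^{-s}\le (3/2)^{-s}$, not $\ge$; the correct uniform lower bound for the $A=0$ term is $2^{-s}$ (coming from $w_j\ge 0$), though your compactness argument for the existence of some $c(s)>0$ already suffices regardless.
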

\begin{example}[Transfer operator for the Gauss map]
Let $n=3.$ Carrying on from Example \ref{exa:Gaussexample1}, we
have in the coordinate $x=w_{1}/w_{2}$

\[
\L_{s}[f](x)=\sum_{A\in\N}\frac{(x+1)^{s}}{(x+A+2)^{s}}f\left(\frac{1}{x+1+A}\right).
\]
\emph{This is not the usual transfer operator for the Gauss map. }However,
letting $M_{(x+1)^{s}}$ denote the operator of multiplication by
$(x+1)^{s},$ we get

\[
M_{(x+1)^{s}}^{-1}\L_{s}M_{(x+1)^{s}}[f](x)=\sum_{A\in\N}\frac{1}{(x+A+1)^{s}}f\left(\frac{1}{x+1+A}\right)=\L_{s}^{\mathrm{Gauss}}[f](x),
\]
the classical transfer operator for the Gauss map. This coincides
with the Perron-Frobenius operator for the Gauss map when $s=2.$
The leading eigenfunction of $\L_{2}^{\mathrm{\mathrm{Gauss}}}$ corresponds
to a multiplicity 1 eigenvalue $1$ and eigenfunction 

\[
h(x)=\frac{1}{1+x}.
\]
This eigenfunction was known to Gauss \cite{GAUSS}, and its invariance property
was formally proved by Kuzmin \cite{KUZMIN}. Correspondingly, the
leading eigenfunction of $\L_{2}$ is $[M_{(x+1)^{2}}h](x)=(x+1)=\frac{1}{w_{2}}$
with eigenvalue 1.

\end{example}

 Our functional analysis takes place on the Banach space $C^{1}(\Delta)$
which consists of continuously differentiable functions on $\Delta$
with the norm

\[
\|f\|_{C^{1}}=\|f\|_{\infty}+\|\nabla f\|_{\infty}.
\]
We use the standard Euclidean metric on $\Delta$ given by the coordinates
$w_{1},\ldots,w_{n-1}.$
\begin{lem}
\label{lem:holomorphic-bounded}In the region $\Re(s)>1$, the mapping
$s\mapsto\L_{s}$ gives a holomorphic family of bounded operators
on the Banach space $C^{1}(\Delta).$ In particular, for $\Re(s)>1$,
$\L_{s}$ is bounded on $C^{1}(\Delta).$ 
\end{lem}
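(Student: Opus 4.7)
The plan is to work directly with the explicit series (\ref{eq:transder-defn-explicit}) and show that it converges absolutely in the operator norm on $C^1(\Delta)$, locally uniformly in $s$ throughout the half-plane $\{\Re(s) > 1\}$; both boundedness and holomorphy will then follow at once.

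Write $\L_s = \sum_{j \in [n-2]} \sum_{A \in \N} T_{j,A,s}$, where $T_{j,A,s}[f](w) = k_{j,A}(w)\,f(\gamma_{n-1}^A \gamma_j . w)$ and $k_{j,A}(w) = (1 + (A+1)(1-w_j))^{-s}$. The $C^0 \to C^0$ bound $\|T_{j,A,s} f\|_\infty \leq (1 + (A+1)/2)^{-\Re(s)}\|f\|_\infty$ has already been used in Lemma \ref{lem:transferwelldefinedoncontinuous}, exploiting $w_j \leq 1/2$. For the $C^1 \to C^1$ bound, differentiating $T_{j,A,s}[f]$ produces two contributions: the kernel derivative, which vanishes transverse to $w_j$ and has modulus at most $|s|(A+1)(1+(A+1)/2)^{-\Re(s)-1}\|f\|_\infty$ in the $w_j$ direction; and the chain-rule contribution, bounded by $(1+(A+1)/2)^{-\Re(s)} \cdot \|D_w(\gamma_{n-1}^A \gamma_j)\|_{\mathrm{op}} \cdot \|\nabla f\|_\infty$. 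The crucial ingredient here is a uniform estimate $\|D_w(\gamma_{n-1}^A \gamma_j)\|_{\mathrm{op}} = O(1/(A+1))$ on $\Delta$, which is read off directly from (\ref{eq:longformula-1}) via the quotient rule: projecting to the affine chart with last coordinate $1$ introduces a common denominator $1 + (A+1)(1-w_j) \geq (A+3)/2$, which dominates the linear-in-$A$ growth in the numerators. Adding these contributions gives $\|T_{j,A,s}\|_{C^1 \to C^1} \ll (1+|s|)(1+(A+1)/2)^{-\Re(s)}$, which is summable in $A$ and locally bounded in $s$ throughout $\{\Re(s) > 1\}$.

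This simultaneously establishes (i) boundedness of $\L_s$ on $C^1(\Delta)$ with an operator-norm bound that is locally bounded in $s$, and (ii) that the partial sums of $\sum_{j,A} T_{j,A,s}$ converge to $\L_s$ in operator norm, locally uniformly in $s$. For the holomorphy claim, each term $s \mapsto T_{j,A,s}$ is an operator-valued entire function, since its $s$-dependence is purely through the scalar kernel $k_{j,A}(w) = \exp(-s \log(1+(A+1)(1-w_j)))$, and multiplication by the $C^1$ function $\log(1+(A+1)(1-w_j))$ is a bounded operation on $C^1(\Delta)$. The Banach-space-valued version of Weierstrass's theorem that a locally uniform limit of holomorphic functions is holomorphic (verified by Morera after pairing with functionals in the continuous dual, combined with weak-to-strong holomorphy for Banach-space-valued maps) then finishes the argument.

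The main obstacle, modest in itself, is the Jacobian estimate $\|D(\gamma_{n-1}^A \gamma_j)\|_{\mathrm{op}} = O(1/(A+1))$. This is a direct computation from Lemma \ref{lem:branchcalcs} and is strictly weaker than the contraction statement of Proposition \ref{prop:uniformly-contracting}, so no appeal to that deeper result is needed at this stage.
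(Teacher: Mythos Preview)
Your proof is correct, and the overall architecture --- show that the series of individual branch operators $T_{j,A,s}$ converges in the $C^1$ operator norm locally uniformly in $s$, then invoke Weierstrass --- matches the paper's. Where you diverge is in the route to the $C^1$ estimate. The paper obtains it by invoking the Ionescu Tulcea--Marinescu inequality (Lemma~\ref{lem:Marinescu-Tulcea}) with $Q=1$, which as stated rests on the uniform-contraction Proposition~\ref{prop:uniformly-contracting}; you instead extract the Jacobian bound $\|D_w(\gamma_{n-1}^A\gamma_j)\|_{\mathrm{op}} = O(1/(A+1))$ directly from the explicit formula of Lemma~\ref{lem:branchcalcs} via the quotient rule. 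Your route is more elementary and self-contained, severing the logical dependence of this lemma on Proposition~\ref{prop:uniformly-contracting} (and in fact even the cruder $O(1)$ bound would already suffice for summability, since the kernel decay $(1+(A+1)/2)^{-\Re(s)}$ carries the sum). The paper's route has the organizational advantage of folding this lemma into the same two-norm machinery that is needed anyway for the Ruelle--Perron--Frobenius theorem.
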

We will prove the following version of the Ruelle-Perron-Frobenius
Theorem. 
\begin{thm}[Ruelle-Perron-Frobenius]
\label{thm:RPF}Let $s\in(1,\infty)$ be a real parameter for the
transfer operator $\L_{s}:C^{1}(\Delta)\to C^{1}(\Delta)$.
\begin{enumerate}
\item \label{enu:RPFspectrum}The eigenvalue $\lambda_{s}$ is multiplicity
one and the rest of the spectrum of $\L_{s}$ in contained in a ball
of radius $R(s)$ strictly less than $\lambda_{s}.$ For any compact
interval $I\subset(1,\infty)$ there is an $\epsilon(I)>0$ such that
$\lambda_{s}-R(s)\geq\epsilon$ for $s\in I.$
\item There is a unique probability measure $\nu_{s}$ such that $\L_{s}^{*}\nu_{s}=\lambda_{s}\nu_{s}.$
\item The unique eigenfunction $h_{s}\in C^{1}(\Delta)$ for the eigenvalue
$\lambda_{s}$ with $\nu_{s}(h_{s})=1$ is positive.
\end{enumerate}
\end{thm}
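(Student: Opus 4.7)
The plan is to establish this as a standard Ruelle--Perron--Frobenius type result via a quasi-compactness argument for $\mathcal{L}_s$ acting on $C^1(\Delta)$, with the uniform contraction of $\Gamma'$ on projective space (Proposition \ref{prop:uniformly-contracting}) serving as the main dynamical input. First I would prove a Lasota--Yorke inequality of the form
\[
\|\mathcal{L}_s^N f\|_{C^1} \leq A_s\, r^N \|f\|_{C^1} + B_{s,N} \|f\|_\infty
\]
for some $r<1$ and all sufficiently large $N$. Differentiating the explicit expression \eqref{eq:transder-defn-explicit} produces two kinds of terms: those where the derivative lands on $f(\gamma.w)$, which pick up a Jacobian factor that is small by the uniform contraction of Proposition \ref{prop:uniformly-contracting}, and those where the derivative lands on the weight $(w_n/(\gamma.w)_n)^s$, which by the explicit formula \eqref{eq:new-nth-coefficient} yields a bounded distortion factor times $|f(\gamma.w)|$. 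Summing over $\gamma \in T_\Gamma$ and using that $\mathcal{L}_s \mathbf{1}$ is bounded (from Lemma \ref{lem:transferwelldefinedoncontinuous}) then gives the inequality.

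Next, since $C^1(\Delta) \hookrightarrow C^0(\Delta)$ is compact by Arzel\`a--Ascoli, the Lasota--Yorke inequality combined with Hennion's theorem (or the Ionescu-Tulcea--Marinescu theorem) implies that the essential spectral radius of $\mathcal{L}_s$ on $C^1(\Delta)$ is strictly less than its spectral radius. The spectral radius itself equals $\lambda_s$ from Corollary \ref{lem:conformalexistence}, since $\mathcal{L}_s$ is a positive operator and $\nu_s$ is a strictly positive functional with eigenvalue $\lambda_s$ (so $\lambda_s \leq \rho(\mathcal{L}_s)$, while the reverse inequality follows from positivity and the standard Krein--Rutman type argument).

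For simplicity of $\lambda_s$ and positivity of $h_s$, I would apply the Perron--Frobenius / Krein--Rutman theorem in the following form: since $\mathcal{L}_s$ is a positive operator with $\lambda_s$ on its essential-spectrum-free part, there exists an eigenfunction $h_s \in C^1(\Delta)$, which must be nonnegative (by averaging over its positive and negative parts and using strict positivity of $\mathcal{L}_s$ after enough iterations, which follows from the fact that the semigroup generated by $T_\Gamma$ has dense orbits on $\Delta$). To promote $h_s \geq 0$ to $h_s > 0$ and to rule out a nontrivial Jordan block at $\lambda_s$, one uses that any iterate $\mathcal{L}_s^N$ maps nonnegative $f \not\equiv 0$ to strictly positive functions, by the topological mixing of the branch semigroup on $\Delta$. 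For the spectral gap (no other peripheral eigenvalues of modulus $\lambda_s$), one runs the classical aperiodicity argument: a hypothetical eigenfunction $g$ with eigenvalue $\lambda_s e^{i\theta}$ and $\theta \neq 0$ would force, via $|\mathcal{L}_s g| \leq \mathcal{L}_s |g| = \lambda_s |g|$, that all terms in \eqref{eq:transder-defn-explicit} have phases aligned along every orbit, contradicting the non-arithmeticity of the contraction ratios $(w_n/(\gamma.w)_n)$ produced by the infinite family of branches. Uniqueness of $\nu_s$ follows from simplicity of $\lambda_s$ as an eigenvalue of $\mathcal{L}_s^*$, which is inherited from simplicity for $\mathcal{L}_s$.

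Finally, for the uniformity of $\lambda_s - R(s) \geq \epsilon$ on compact $I \subset (1,\infty)$, I would combine the holomorphy of $s \mapsto \mathcal{L}_s$ (Lemma \ref{lem:holomorphic-bounded}) with upper semicontinuity of the spectrum under analytic perturbations: the simple eigenvalue $\lambda_s$ depends continuously on $s$, and the rest of the spectrum, being contained in a closed set at positive distance from $\lambda_s$ for each $s$, varies upper-semicontinuously, so the gap has a positive infimum on the compact set $I$. The main obstacle I anticipate is the aperiodicity step needed for the spectral gap, since the $T_\Gamma$ action is over an infinite generating set and verifying non-arithmeticity of the weights requires a careful choice of two words giving incommensurable log-contractions; this is where the specific structure of the maps $\gamma_{n-1}^A\gamma_j$ from Lemma \ref{lem:branchcalcs} has to be used concretely.
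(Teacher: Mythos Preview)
Your approach is essentially the one the paper takes: the key input is exactly the Lasota--Yorke (Ionescu Tulcea--Marinescu) inequality you describe, which the paper states as Lemma~\ref{lem:Marinescu-Tulcea} and derives from the uniform contraction of Proposition~\ref{prop:uniformly-contracting}; from there the paper simply cites the standard machinery (Birkhoff cones \`a la Liverani, or the more direct route in Pollicott and Parry--Pollicott) rather than spelling out the quasi-compactness and Krein--Rutman steps you outline.

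One point to correct in your final paragraph: the ``aperiodicity'' needed for Part~\ref{enu:RPFspectrum} of the RPF theorem (real $s$) is \emph{not} non-arithmeticity of the log-contraction ratios. For real $s$ the weights are positive, so a peripheral eigenfunction $g=e^{i\psi}h_s$ with eigenvalue $\lambda_s e^{i\theta}$ forces $\psi(\gamma.w)-\psi(w)\equiv -\theta\pmod{2\pi}$ for every $\gamma\in T_\Gamma$; evaluating at the fixed point of any single contraction $\gamma$ (which exists by Proposition~\ref{prop:uniformly-contracting}) gives $\theta\in 2\pi\Z$ immediately. The non-arithmeticity of $\tau(\gamma,w)=\log((\gamma.w)_n/w_n)$ that you describe is instead what is needed for the \emph{complex} case, i.e.\ Theorem~\ref{thm:Wielandt}, and the paper isolates that as the separate Proposition~\ref{prop:minus-s-tau-regular}. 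So the obstacle you flag is real, but it belongs to a different theorem.
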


In the case of the Gauss map, a version of Theorem \ref{thm:RPF}
was first proved by Wirsing \cite{WIRSING}.
In the case of $n=4$, when there is a close connection between the Rauzy gasket and the dynamics of $\Gamma'$ on $\Delta$ as explained in Example \ref{ex:Rauzy}, a version of Theorem \ref{thm:RPF} was proved by Avila, Hubert, and Skripchenko in  \cite[Proof of Theorem 22]{AHS2}. There are slight differences; in \cite{AHS2} the authors work in a symbolic setting, so their function space is not the same as ours, whereas we need to know that $h\in C^1(\Delta)$, for example, in order to state Theorem \ref{thm:-linear-counting}.

It is well-known that Theorem \ref{thm:RPF} follows from eventually
contracting dynamics for example, by the use of Birkhoff cones and
contraction of a Hilbert projective metric as in the paper of Liverani
\cite{LIVERANI}. The only thing that is possibly nonstandard about
our setting is the presence of both countably many branches and a
semigroup action for which we expect the invariant set to have non
full Hausdorff dimension (cf. Figures \ref{fig:Color-image} and \ref{fig:fractal}).
We explain the proof of Lemma \ref{lem:holomorphic-bounded} and Theorem
\ref{thm:RPF} in Section \ref{sub:consequences and RPF}.

These proofs depend crucially on our dynamics being uniformly contracting,
which we make precise in Proposition \ref{prop:uniformly-contracting}. 
 We freely make use of this property henceforth. 
Let $T_\Gamma^{\Z_+}$ denote the set of all positively indexed sequences $(\gamma^{(1)}, \gamma^{(2)} ,\ldots )$ with each $\gamma^{(j)} \in T_\Gamma$.
Because the elements of $T_\Gamma$ uniformly contract $\Delta$, one obtains for any fixed $w_0 \in \Delta$ a map
\def\limit{\mathrm{limit}}

\[ 
\limit : T_\Gamma^{\Z_+} \to \Delta ,\quad \limit( \gamma^{(1)}, \gamma^{(2)} ,\ldots ) :=\lim_{j \to \infty} \gamma^{(1)} \ldots \gamma^{(j)} . w_0 ;
\]
in fact, this map does not depend on the choice of $w_0$. The image of this map is the attractor of the iterated function system 
given by the elements of $T_\Gamma$, which we also call the  \emph{limit set} of $\Gamma'$, and denote it by $\mathfrak{K}(\Gamma')$. Then $\mathfrak{K}(\Gamma')$ is a compact $\Gamma'$-invariant subset of $\Delta$. 

The Ruelle-Perron-Frobenius Theorem is not enough for input to Lalley's
framework of complex analysis. One must also know that there is some
non trivial spectral bound for $\L_{s}$ on the vertical line $s=\beta+it$,
the trivial bound being that the spectral radius is no greater than $\lambda_{\beta}.$
In the context of subshifts of finite type, this was investigated
by Pollicott in \cite{POLLICOTT} who found a cohomological criterion
for a nontrivial spectral bound. We make the following definition
as in Pollicott \cite[pg. 139]{POLLICOTT}, adapted to the current
setting.
\begin{defn}
We say that a function $f=u+iv$ with
\[
u,v:T_{\Gamma}\times\Delta\to\R
\]
is \emph{regular} if there is no $r\in\R$ and 
bounded\footnote{ It is possible to impose more regularity on $G$ in this definition but it is not necessary for our purposes.}
 function $G : \mathfrak{K}(\Gamma') \to \R$
such that
\[
v(\gamma,w)-G(\gamma . w)+G(w)-r\in2\pi\Z
\]
for all $\gamma\in T_{\Gamma}$ and $w\in
\mathfrak{K}(\Gamma')
.$ In other words,
there is no $r\in\R$ so that $v-r$ is \emph{cohomologous} on 
$\mathfrak{K}(\Gamma')$
to a $2\pi\Z$-valued function.
\end{defn}
The following theorem can be viewed as an an extension of a result
of Wielandt \cite{WIELANDT} on the spectrum of finite dimensional complex matrices.
It was proved by Pollicott \cite[Theorem 2]{POLLICOTT} in the context
of shifts of finite type in symbolic dynamics. The proof goes through
perfectly well in our context\footnote{ The main point is that our definition of regular function is strong enough to rule out $\L_s$ having an eigenvalue of modulus $\lambda_{\Re(s)}$. This fact is supplemented by compactness arguments  relying on the Ionescu Tulcea-Marinescu type inequality that we
establish in Lemma \ref{lem:Marinescu-Tulcea}.}.  to give 
\begin{thm}[Wielandt's Theorem, after Pollicott]
\label{thm:Wielandt}If 
\begin{equation}
F_{s}(\gamma,w)\equiv-s\log\left(\frac{(\gamma.w)_{n}}{w_{n}}\right)\in C^{1}(\Delta;\C)\label{eq:cocycle}
\end{equation}
 is regular, 
 $\Im(s) \neq 0$,
   and $\Re(s)>1$ then the spectral radius of the operator
$\L_{s}:C^{1}(\Delta)\to C^{1}(\Delta)$ is strictly less than $\lambda_{\Re(s)}.$
\end{thm}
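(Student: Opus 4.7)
The plan is to follow the classical Wielandt strategy as adapted by Pollicott: assume for contradiction that the spectral radius of $\L_s$ on $C^1(\Delta)$ equals $\lambda_{\Re(s)}$, produce a peripheral eigenfunction by quasi-compactness, and then use a sharp triangle-inequality argument to extract a cohomological identity for the phase function $v=\Im F_s$, contradicting the regularity hypothesis.

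First I would record the basic domination: for $f\in C^1(\Delta)$ and $w\in\Delta$,
\[
|\L_s f(w)|\leq\sum_{\gamma\in T_\Gamma}\left(\tfrac{w_n}{(\gamma.w)_n}\right)^{\Re(s)}|f(\gamma.w)|=\L_{\Re(s)}|f|(w),
\]
so, iterating and using Theorem~\ref{thm:RPF}\,(\ref{enu:RPFspectrum}), the spectral radius of $\L_s$ is at most $\lambda_{\Re(s)}$. Next I would invoke the Ionescu Tulcea--Marinescu type inequality established in Lemma~\ref{lem:Marinescu-Tulcea} to show that the essential spectral radius of $\L_s$ on $C^1(\Delta)$ is strictly smaller than $\lambda_{\Re(s)}$. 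Consequently, if the spectral radius of $\L_s$ equals $\lambda_{\Re(s)}$ then there is an eigenvalue $\lambda=\lambda_{\Re(s)} e^{ir}$ with some $r\in\R$ and an eigenfunction $h\in C^1(\Delta)$, $h\not\equiv 0$, satisfying $\L_s h=\lambda h$.

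The core step is the triangle-inequality argument. Let $h_{\Re(s)}>0$ be the leading eigenfunction of $\L_{\Re(s)}$ from Theorem~\ref{thm:RPF}, and write $h=h_{\Re(s)}\,\phi$ where $\phi$ is continuous on $\Delta$. Normalize so that $\sup_{w\in\mathfrak{K}(\Gamma')}|\phi(w)|=1$, and let $w_\star\in\mathfrak{K}(\Gamma')$ be a point achieving the supremum; such a point exists because $\mathfrak{K}(\Gamma')$ is compact. From $\L_s h=\lambda h$ one obtains at $w_\star$
\[
h_{\Re(s)}(w_\star)|\phi(w_\star)|=|\lambda h(w_\star)|=\left|\sum_{\gamma\in T_\Gamma}\left(\tfrac{w_{\star,n}}{(\gamma.w_\star)_n}\right)^s h_{\Re(s)}(\gamma.w_\star)\phi(\gamma.w_\star)\right|,
\]
while term-by-term domination by $\L_{\Re(s)}(h_{\Re(s)})(w_\star)=\lambda_{\Re(s)} h_{\Re(s)}(w_\star)$ together with $|\lambda|=\lambda_{\Re(s)}$ forces equality in the triangle inequality. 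Hence all the complex numbers being summed must have the same argument. Using $\Gamma'$-invariance of $\mathfrak{K}(\Gamma')$ and iterating the argument at preimages, one concludes $|\phi|\equiv 1$ on $\mathfrak{K}(\Gamma')$ and that for every $\gamma\in T_\Gamma$ and every $w\in\mathfrak{K}(\Gamma')$,
\[
\arg\lambda+\arg\phi(w)\equiv\Im F_s(\gamma,w)+\arg\phi(\gamma.w)\pmod{2\pi}.
\]
Setting $G(w)=\arg\phi(w)$ on $\mathfrak{K}(\Gamma')$ (a bounded function, since $|\phi|=1$ there) and writing $v=\Im F_s$, this reads
\[
v(\gamma,w)-G(\gamma.w)+G(w)-r\in 2\pi\Z
\]
for all $\gamma\in T_\Gamma$ and $w\in\mathfrak{K}(\Gamma')$. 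But this is precisely the equation prohibited by the regularity of $F_s$, which contradicts the hypothesis. Therefore the spectral radius of $\L_s$ is strictly less than $\lambda_{\Re(s)}$.

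The main obstacle I anticipate is making the triangle-inequality/cohomology step completely rigorous, in particular: (i) promoting the equality of arguments at the single extremal point $w_\star$ to the global identity on $\mathfrak{K}(\Gamma')$, which requires iterating the eigenvalue equation and using that preimages of $\mathfrak{K}(\Gamma')$ under $T_\Gamma$ are dense in $\mathfrak{K}(\Gamma')$ (a consequence of the uniform contraction in Proposition~\ref{prop:uniformly-contracting}); and (ii) verifying that the modulus $|\phi|$ attains its maximum on the limit set rather than somewhere off it, which follows from the fact that $\L_s^N$ effectively averages the values of $h$ along orbits accumulating on $\mathfrak{K}(\Gamma')$. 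These points are handled in Pollicott's original argument \cite{POLLICOTT} and carry over verbatim once quasi-compactness (Lemma~\ref{lem:Marinescu-Tulcea}) and uniform contraction (Proposition~\ref{prop:uniformly-contracting}) are in hand.
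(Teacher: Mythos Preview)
Your proposal is correct and follows precisely the approach the paper intends: the paper does not give its own proof of this theorem but explicitly defers to Pollicott \cite{POLLICOTT}, noting in a footnote that the regularity hypothesis rules out eigenvalues of modulus $\lambda_{\Re(s)}$ and that the remaining compactness arguments rest on the Ionescu Tulcea--Marinescu inequality (Lemma~\ref{lem:Marinescu-Tulcea}). Your outline---domination, quasi-compactness via Lemma~\ref{lem:Marinescu-Tulcea}, the triangle-inequality argument on the limit set, and the resulting cohomological equation contradicting regularity---is exactly Pollicott's scheme transported to the present setting, and your identified obstacles (i) and (ii) are the standard points handled there.
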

This is applicable in the present setting:
\begin{prop}
\label{prop:minus-s-tau-regular}For all $s\in\C-\R$, the function
in (\ref{eq:cocycle}) is regular.\end{prop}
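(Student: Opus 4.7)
Suppose for contradiction that $F_s$ is not regular: there exist $r\in\R$ and a bounded $G:\mathfrak K(\Gamma')\to\R$ with
\[
v(\gamma,w) - G(\gamma.w) + G(w) - r \in 2\pi\Z
\]
for every $\gamma\in T_\Gamma$ and $w\in\mathfrak K(\Gamma')$, where $v(\gamma,w) := -\Im(s)\log((\gamma.w)_n/w_n)$ is the imaginary part of $F_s$. By Proposition \ref{prop:uniformly-contracting}, each $\gamma\in T_\Gamma$ is a uniform contraction of $\Delta$, and so has a unique fixed point $w^*(\gamma)\in\Delta$; this fixed point lies in $\mathfrak K(\Gamma')$ since it equals $\limit(\gamma,\gamma,\gamma,\ldots)$. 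Evaluating the cohomological relation at $w=w^*(\gamma)$ makes the $G$-terms cancel, yielding
\[
\Im(s)\,\tau(\gamma) \equiv -r \pmod{2\pi}, \qquad \tau(\gamma) := \log\frac{(\gamma.w^*(\gamma))_n}{w^*(\gamma)_n},
\]
so that $\tau(\gamma)-\tau(\gamma') \in \tfrac{2\pi}{|\Im(s)|}\Z$ for any $\gamma,\gamma'\in T_\Gamma$.

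Fix $j\in\{1,\ldots,n-2\}$ and consider the family $\gamma_A := \gamma_{n-1}^A\gamma_j\in T_\Gamma$. By Lemma \ref{lem:branchcalcs},
\[
\tau(\gamma_A) \;=\; \log\bigl(1+(A+1)(1-w^*(\gamma_A)_j)\bigr).
\]
The ordering on $\Delta$ together with $\sum_i w^*_i=1$ forces $w^*(\gamma_A)_j \le 1/(n-j) \le 1/2$, so in particular $\tau(\gamma_A)\to\infty$. The fixed-point equation for $\gamma_A$ derived from \eqref{eq:longformula-1} reduces, after setting $u = 1-w^*(\gamma_A)_j$ and $D = 1+(A+1)u$, to $D^{n-j}(1-u) = 1+Au$; an asymptotic analysis of this polynomial equation shows $w^*(\gamma_A)_j = O(A^{-(n-j-1)})\to 0$ as $A\to\infty$. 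Consequently
\[
\tau(\gamma_{A+1})-\tau(\gamma_A) \;=\; \log\tfrac{A+2}{A+1} + o(1) \;\longrightarrow\; 0.
\]

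This produces the desired contradiction: by the first paragraph each difference $\tau(\gamma_{A+1})-\tau(\gamma_A)$ belongs to the discrete subgroup $\tfrac{2\pi}{|\Im(s)|}\Z$, and a sequence in this subgroup converging to $0$ must be eventually zero; but then $\tau(\gamma_A)$ is eventually constant, contradicting $\tau(\gamma_A)\to\infty$. The main technical point is verifying the asymptotic $w^*(\gamma_A)_j\to 0$, which uses the concrete structure of the generators in $T_\Gamma$ beyond the abstract contraction from Proposition \ref{prop:uniformly-contracting}; every other step is a direct consequence of Lemma \ref{lem:branchcalcs} and the hypothesis $\Im(s)\ne 0$.
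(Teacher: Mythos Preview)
Your proof is correct and follows essentially the same strategy as the paper: evaluate the cohomological relation at fixed points of the generators $\gamma_{n-1}^A\gamma_j$ to eliminate $G$, then show the resulting values $\tau(\gamma_A)$ have successive differences tending to zero while $\tau(\gamma_A)\to\infty$. The only difference is computational: the paper takes $j=n-2$, where the fixed-point equation factors and yields the explicit eigenvalue $T_+(A)=\tfrac{1}{2}\bigl((A+1)+\sqrt{(A+1)^2+4}\bigr)$ with $\tau(\gamma_A)=\log T_+(A)$, whereas you keep $j$ general and extract the needed asymptotic $w^*(\gamma_A)_j=O(A^{-(n-j-1)})$ directly from the polynomial relation $D^{n-j}(1-u)=1+Au$ (which is immediate from $1-u\le(1+A)/D^{n-j}$ and $D\ge 1+(A+1)/2$). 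Your route avoids the eigenvalue computation at the cost of a slightly less explicit constant; both arrive at the same contradiction.
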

\begin{proof}
It is enough to show that for
\[
\tau(\gamma,w)=\log\left(\frac{(\gamma.w)_{n}}{w_{n}}\right)=\log(\gamma.w)_{n}-\log w_{n}
\]
there is no
bounded $G$ on $\mathfrak{K}(\Gamma')$
 such that the values of
\[
\tau'(\gamma,w):=\tau(\gamma,w)-G(\gamma .w)+G(w)
\]
for $(\g,w)\in T_{\Gamma}\times
\mathfrak{K}(\Gamma')
$ are contained in a translate
of a discrete subgroup of $\R$. So it is also enough to show that
for any such $\tau',$ the gaps between distinct values of $\tau'$
are not bounded below.

The fundamental simple fact we use is that for $\gamma\in T_{\Gamma}$
and 
$w$ such that $\gamma . w=w$, (from which it follows $w\in \mathfrak{K}(\Gamma')$)
\[
\tau'(\gamma,w)=\tau(\gamma,w)-G(\gamma . w)+G(w)=\tau(\gamma,w).
\]
Then it remains to show that gaps between distinct values of $\tau$
on the fixed points of $\gamma\in T_{\Gamma}$ are not bounded below.
We compute that
\[
\gamma_{n-1}^{A}\gamma_{n-2}=\left(\begin{array}{cccccc}
1 &  &  &  &  & 0\\
 & \ddots &  &  &  & \vdots\\
 &  & 1 & 0 & 0 & 0\\
0 & \cdots & 0 & 0 & 1 & 0\\
A & \cdots & A & 0 & A & 1\\
A+1 & \cdots & A+1 & 0 & A+1 & 1
\end{array}\right),
\]
so (using the block lower triangular structure) 
\[
\det(\gamma_{n-1}^{A}\gamma_{n-2}-TI_{n})=(1-T)^{n-3}(-T)(T^{2}-(A+1)T-1).
\]
Consequently, the eigenvalues aside from $0$ and $1$ are 
\[
T=\frac{A+1\pm\sqrt{(A+1)^{2}+4}}{2}.
\]
Let $T_{+}$ be the largest, that is, $T_{+}=\frac{A+1+\sqrt{(A+1)^{2}+4}}{2}>A+1.$
One can find an eigenvector $v_{+}$ for $T_{+}$ where
\[
v_{+}=(0,0,\ldots,0,1,T_{+},T_{+}(T_{+}-A))>0,
\]
moreover, $v_{+}\in\H.$ Now, $(\gamma.v_{+})_{n}/(v_{+})_{n}=T_{+}$
and so $\tau(\gamma_{n-1}^{A}\gamma_{n-2},[v_{+}])=\log T_{+}.$ Writing
$T_{+}=T_{+}(A),$ we have

\begin{eqnarray*}
\log T_{+}(A+1)-\log T_{+}(A) & = & \log\left(\frac{A+2+\sqrt{(A+2)^{2}+4}}{A+1
+
\sqrt{(A+1)^{2}+4}}\right)\\
 & = & \log\left(\left(1+\frac{1}{A+1}\right)\frac{1+\frac{1}{A+2}+\sqrt{1+\frac{4}{(A+2)^{2}}}}{1+\frac{1}{A+1}+\sqrt{1+\frac{4}{(A+1)^2}}}\right)\to0
\end{eqnarray*}
as $A\to\infty.$ As the terms are easily seen to be non-zero, this
completes the proof.
\end{proof}
The contour shifting argument of Lalley hinges on the behavior
of the eigenvalue $\lambda_{s}$ and, in particular, on the location of
the possible real value $\beta$ such that $\lambda_{\beta}=1.$ Since
our dynamics is suitably uniformly contracting, if such a value exists
it is unique:
\begin{prop}
\label{prop:The-eigenfunction-decreases}The eigenvalue $\lambda_{s}$
is a real analytic function of $s$ that is strictly decreasing on
$(1,\infty).$ We have $\lambda_{s}<1$ for sufficiently large $s.$
As such, any value $\beta_{0}\in(1,\infty)$ such that $\lambda_{\beta_{0}}=1$
is unique, and if no such $\beta_{0}$ exists then $\lambda_{s}<1$
for all $s\in(1,\infty)$.
\end{prop}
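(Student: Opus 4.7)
I would establish the three claims via perturbation theory, a sign computation, and a direct estimate.

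\emph{Real analyticity.} By Lemma \ref{lem:holomorphic-bounded} the map $s\mapsto\L_{s}$ is a holomorphic family of bounded operators on $C^{1}(\Delta)$ in $\{\Re(s)>1\}$, and by Theorem \ref{thm:RPF}\eqref{enu:RPFspectrum} for each real $s_{0}>1$ the eigenvalue $\lambda_{s_{0}}$ is simple and separated from the rest of the spectrum by a definite gap. These are precisely the hypotheses of Kato's analytic perturbation theorem, so in a complex neighborhood of $s_{0}$ the eigenvalue $\lambda_{s}$ and (after suitable normalization) the eigenfunction $h_{s}$ extend holomorphically in $s$. Restricting to real $s$ gives real analyticity on $(1,\infty)$.

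\emph{Strict monotonicity.} Normalize so that $\nu_{s}(h_{s})=1$. Differentiating the eigenequation $\L_{s}h_{s}=\lambda_{s}h_{s}$ in $s$ and pairing against $\nu_{s}$, the terms $\nu_{s}(\L_{s}h_{s}')$ and $\lambda_{s}\nu_{s}(h_{s}')$ cancel using $\L_{s}^{*}\nu_{s}=\lambda_{s}\nu_{s}$, leaving
\[
\frac{d\lambda_{s}}{ds}=\nu_{s}\!\left(\L_{s}'h_{s}\right),\qquad \L_{s}'[f](w)=\sum_{\gamma\in T_{\Gamma}}\left(\frac{w_{n}}{(\gamma.w)_{n}}\right)^{s}\log\!\left(\frac{w_{n}}{(\gamma.w)_{n}}\right)f(\gamma.w).
\]
By Lemma \ref{lem:branchcalcs}, for each $\gamma=\gamma_{n-1}^{A}\gamma_{j}\in T_{\Gamma}$ and each $w\in\Delta$ one has $(\gamma.w)_{n}=1+(A+1)(1-w_{j})>1=w_{n}$, using $w_{j}\leq 1/2$. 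Hence the logarithmic weight is strictly negative termwise, and since $h_{s}>0$ by Theorem \ref{thm:RPF}, the function $\L_{s}'h_{s}$ is strictly negative on $\Delta$. Integrating against the probability measure $\nu_{s}$ yields $d\lambda_{s}/ds<0$.

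\emph{Large-$s$ decay and uniqueness.} Pairing $\L_{s}^{*}\nu_{s}=\lambda_{s}\nu_{s}$ with $\mathbf{1}$ and using $\nu_{s}(\mathbf{1})=1$ gives $\lambda_{s}=\nu_{s}(\L_{s}\mathbf{1})\leq\|\L_{s}\mathbf{1}\|_{\infty}$. Using the explicit expansion \eqref{eq:transder-defn-explicit} together with $1-w_{j}\geq 1/2$, one bounds
\[
\|\L_{s}\mathbf{1}\|_{\infty}\leq (n-2)\sum_{A=0}^{\infty}\left(1+\tfrac{A+1}{2}\right)^{-s},
\]
whose leading term is a constant multiple of $(2/3)^{s}$, so the whole expression tends to $0$ as $s\to\infty$. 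Thus $\lambda_{s}<1$ for $s$ sufficiently large. Combined with strict monotonicity, any $\beta_{0}\in(1,\infty)$ with $\lambda_{\beta_{0}}=1$ is unique, and if no such $\beta_{0}$ exists then by monotonicity and the eventual inequality $\lambda_{s}<1$, we must have $\lambda_{s}<1$ throughout $(1,\infty)$.

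The one step demanding technical care is the termwise differentiation producing $\L_{s}'$: one must justify that $\L_{s}'$ is a bounded operator on $C^{1}(\Delta)$ and that the pairing $\nu_{s}(\L_{s}'h_{s})$ is absolutely convergent. This follows automatically from holomorphy of $s\mapsto\L_{s}$ together with the observation that the additional factor $\log((\gamma.w)_{n}/w_{n})$ grows only like $\log(A+1)$, which is absorbed by the geometric decay already exploited in Lemma \ref{lem:transferwelldefinedoncontinuous}; I expect this to be bookkeeping rather than a genuine obstacle.
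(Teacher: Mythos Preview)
Your proof is correct. The real-analyticity and large-$s$ decay arguments are essentially identical to the paper's (the paper evaluates $\L_s h_s=\lambda_s h_s$ at a maximum of $h_s$ rather than pairing $\L_s^*\nu_s=\lambda_s\nu_s$ against $\mathbf{1}$, but the resulting bound $\lambda_s\le\|\L_s\mathbf{1}\|_\infty$ is the same).

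The monotonicity argument, however, follows a genuinely different route. You differentiate the eigenequation and pair against the eigenmeasure to obtain the standard perturbation identity $\lambda_s'=\nu_s(\L_s'h_s)$, then read off the sign termwise. The paper instead uses the ``pressure formula'': it invokes the RPF theorem to write $\log\lambda_s=\tfrac{1}{N}\log(\L_s^N[\mathbf{1}](w))+o(1)$ with uniform convergence in $s$ on compact intervals, computes $\tfrac{d}{ds}\log(\L_s^N[\mathbf{1}])\le -cN$ directly from the observation that $\log((\gamma.w)_n/w_n)\ge cN$ for $\gamma\in(T_\Gamma)^{(N)}$, and concludes that $\log\lambda_s$ is a uniform limit of functions with derivatives $\le -c$. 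Your approach is more direct and is the usual thermodynamic-formalism computation; the paper's approach has the minor advantage of avoiding any appeal to the differentiability of $h_s$ or $\nu_s$ in $s$ (it only needs the \emph{existence} of $h_s$ and the uniform spectral gap), and it yields a quantitative bound $\lambda_s'/\lambda_s\le -c$ rather than mere strict negativity, though that extra precision is not used downstream. Your acknowledged bookkeeping step---that $\L_s'$ is bounded on $C^1(\Delta)$---is indeed routine given the $\log(A+1)$ growth of the extra factor.
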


As we will discuss momentarily, such a $\beta_{0}$ does exist, and
it coincides with Baragar's $\beta$ from Theorem \ref{thm:Baragar-theorem}.
Note that when $s=\beta$ we obtain from Theorem \ref{thm:RPF} a
unique measure such that $\L_{\beta}^{*}\nu_{\beta}=\nu_{\beta}$.
Then we will show $\nu_{\beta}$ is the \emph{conformal measure }of
Theorem \ref{thm:beta-characterization}. Proposition \ref{prop:The-eigenfunction-decreases}
will be proved in Section \ref{sub:eigenvalue-behaviour}.

\subsection{Proofs of Theorem \ref{thm:beta-characterization} and \ref{thm:-linear-counting}
given the spectral theorems}

Here we make a sketch of the passage from the spectral theory outlined
in Section \ref{sub:spectral-theory-transfer-operator} to Theorems
\ref{thm:beta-characterization} and \ref{thm:-linear-counting} via
(\ref{eq:resolvent-formula}) and the techniques of Lalley from \cite{LALLEY}.
Firstly, if there is no value $\beta_{0}$ such that $\lambda_{\beta_{0}}=1$
then Proposition \ref{prop:The-eigenfunction-decreases} together
with Lemma \ref{lem:holomorphic-bounded} imply that the resolvent
$(1-\L_{s})^{-1}$ exists as a holomorphic family of bounded operators
on $C^{1}(\Delta)$ in the region $\Re(s)>1$. This would imply by
standard contour shifting arguments in combination with (\ref{eq:resolvent-formula})
that for any $\eta>0$ 

\begin{equation}
N(w,a)=O(e^{(1+\eta)a}).\label{eq:contradictory-count}
\end{equation}
But this can be used along with the arguments of Section \ref{sec:converting-linear-to-nonlinear}
to show for some $z$ in an infinite orbit of $\Lambda$ that $M(z,a)=O(e^{(1+\eta)a})$,
in contradiction to Baragar's result (Theorem \ref{thm:Baragar-theorem})
when $\eta$ is small. Here we use the fact that for any $n,$ there
is an infinite orbit in $V(\Z_{+})$ when $n=a$ and $k=0$ coming
from the tuple $(1,1,\ldots,1).$ In fact, for small $\eta,$ (\ref{eq:contradictory-count})
is already in contradiction to some of Baragar's results from \cite{BARAGAR2}
on orbits of the linear semigroup $\Gamma.$

Now suppose there is such a $\beta_{0}>1$ as in Proposition \ref{prop:The-eigenfunction-decreases}.
Then Lalley's method of proof of his analog of Theorem \ref{thm:-linear-counting}
is by a contour shifting argument involving control on meromorphic
behavior of $(1-\L_{s})^{-1}$ in the following two ways:
\begin{enumerate}
\item By standard results in Linear Perturbation Theory \cite[Sections 4.3 and 7.1]{KATO},
Lemma \ref{lem:holomorphic-bounded} and Part \ref{enu:RPFspectrum} of Theorem \ref{thm:RPF} 
 imply that the functions
\[
s\mapsto\lambda_{s},\:s\mapsto h_{s},\:s\mapsto\nu_{s}
\]
extend to holomorphic functions on a neighborhood of the real line
segment $(1,\infty)$ in $\Re(s)>1$ such that
\[
\lambda_{s}\neq0,\:\L_{s}h_{s}=\lambda_{s}h_{s},\:\L_{s}^{*}\nu_{s}=\lambda_{s}\nu_{s},\:\nu_{s}(h_{s})=1.
\]
By suitable spectral decomposition of $\L_{s}$, one finds a neighborhood
$U$ of $s=\beta_{0}$ and an operator $\L_{s}'$ such that $(1-\L'_{s})^{-1}$
is a holomorphic family of bounded operators on $C^{1}(\Delta)$ for
$s\in U$ and moreover
\[
(1-\L_{s})^{-1}g=(1-\lambda_{s})^{-1}\nu_{s}(g)h_{s}+(1-\L_{s}')^{-1}g
\]
for $s\in U-\{\beta_{0}\}.$ This is the analog of \cite[Proposition 7.2]{LALLEY}.
\item By use of Theorem \ref{thm:Wielandt} along with its supplement Proposition
\ref{prop:minus-s-tau-regular}, we obtain that
\[
s\mapsto(1-\L_{s})^{-1}
\]
 is holomorphic in a neighborhood of every $s$ with $\Re(s)=\beta_{0}$,
with the exception of $s=\beta_{0}.$
\end{enumerate}

The outcome of Lalley's argument is that 
\[
N(w,a)=h_{\beta_{0}}(w)e^{\beta_{0}a}+o(e^{\beta_{0}a})
\]
where the decay in the small $o$ does not depend on $w.$  Our argument
of Section \ref{sec:comparison-section} converts this into a version
of Theorem \ref{thm:main-counting} with $\beta$ replaced by $\beta_{0}.$
Finally, this contradicts Baragar's Theorem \ref{thm:Baragar-theorem}
unless $\beta=\beta_{0}.$ Then Theorem \ref{thm:-linear-counting}
is proved, assuming the theorems of Section \ref{sub:spectral-theory-transfer-operator}.

Theorem \ref{thm:beta-characterization} is now a direct consequence
of the following fact:
\begin{lem}
For all $\gamma\in\G$ we have

\[
\frac{(\gamma.w)_{n}}{w_{n}}=|\Jac_{w}(\gamma)|^{-\frac{1}{n-1}}
\]
where $|\Jac_{w}(\gamma)|$ is the absolute value of the Jacobian determinant of $\gamma$
acting on $\Delta=\H/\R_{+}$ at the point $w.$
\end{lem}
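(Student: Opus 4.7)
The plan is to reduce the statement to the standard formula for the Jacobian of a projective transformation in an affine chart, combined with the observation that each generator $\gamma_j$ acts with determinant $\pm 1$ on the $(n-1)$-dimensional hyperplane $\H$.

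First, I would set up coordinates. Using the chart $w_n = 1$, a point of $\Delta$ is uniquely represented by $(w_1,\ldots,w_{n-1},1)$ with $\sum_{j<n}w_j = 1$, so that $(w_1,\ldots,w_{n-2})$ are free coordinates with $w_{n-1}=1-\sum_{j<n-1}w_j$ determined. The descent $\bar\gamma:\Delta\to\Delta$ is given in this chart by $w\mapsto (\gamma.w)/(\gamma.w)_n$, since $\gamma$ is linear and $\Delta$ is the quotient by $\R_+$.

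Second, I would invoke (or record a brief derivation of) the standard projective-geometry identity: for any linear automorphism $A$ of a $d$-dimensional vector space $V$ and any linear functional $\ell$ on $V$, the induced map on the affine chart $\{v:\ell(v)=1\}$ of $P(V)$ has Jacobian determinant
\[
|\Jac_v(\bar A)| = \frac{|\det A|}{|\ell(Av)|^{d}}.
\]
A quick way to see this is to write the standard volume form on $V\setminus\{v:\ell(v)=0\}$ as $t^{d-1}\,dt\wedge\omega$, where $t=\ell(v)$ and $\omega$ is the affine-chart form, and compare pull-backs under $A$; the factor $t^{d-1}$ absorbs $d-1$ powers of the scaling $r(v)=\ell(Av)/\ell(v)$, and one extra power of $r(v)$ comes from the scaling of $t$ itself, yielding the stated exponent $d$. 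In our situation $V=\H$, $d=n-1$, and $\ell$ is projection onto the $n$-th coordinate, so $\ell(\gamma.w)/\ell(w) = (\gamma.w)_n/w_n$.

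Third, I would show $|\det(\gamma|_\H)|=1$ for every $\gamma\in\Gamma$. It suffices to verify this for the generators $\gamma_j$. Using $(y_1,\ldots,y_{n-1})$ as coordinates on $\H$ (with $y_n=\sum_{i<n}y_i$ eliminated), the matrix of $\gamma_j|_\H$ is the identity in rows $i<j$, the shifted identity $y_i\mapsto y_{i+1}$ in rows $j\leq i\leq n-2$, and the all-ones row in row $n-1$ (this row encodes the image of the $(n-1)$-st coordinate, which is $y_n$ on $\H$). Expanding along the column where the shift produces a lone $1$ in the last row gives $\det=\pm 1$; multiplicativity then gives $|\det(\gamma|_\H)|=1$ for every $\gamma\in\Gamma$.

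Combining the three steps yields
\[
|\Jac_w(\bar\gamma)| = \frac{1}{((\gamma.w)_n/w_n)^{n-1}},
\]
and the desired identity follows by taking the $-(n-1)$-th root. I expect no serious obstacle here: the one thing to check carefully is that the factor $d$ in the exponent of the projective-Jacobian formula really is $n-1$ (the dimension of the \emph{vector space} $\H$) and not $n-2$ (the dimension of $\Delta$), which is the point most likely to cause confusion.
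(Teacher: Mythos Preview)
Your argument is correct and matches the first of the two approaches the paper itself suggests: the paper's proof simply says the identity ``can be checked by a direct calculation on general grounds as in \cite[Lemma 2.1]{POLLICOTTRVZ}, or by using explicit formulae that appear later in this paper,'' and you have written out the former route in detail. Your care with the exponent is well placed, and the computation $|\det(\gamma_j|_\H)|=1$ via expansion along column $j$ is exactly right.
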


This can be checked by a direct calculation on general grounds as
in \cite[Lemma 2.1]{POLLICOTTRVZ}, or by using explicit formulae
that appear later in this paper, e.g. by calculating the determinants
of total derivatives we calculate in Section \ref{sec:dynamics}.

\subsection{Consequences of uniformly contracting dynamics}

\label{sub:consequences and RPF}

The spectral theorems of the previous section all rely on the action
of $\G'$ on $\Delta$ being by contractions. That can be summarized
in the following proposition.
\begin{prop}
\label{prop:uniformly-contracting}There are constants $D>0$ and
$\rho<1$ such that for all $\gamma^{(1)},\gamma^{(2)},\ldots,\gamma^{(N)}\in T_{\Gamma}$
we have

\[
\|d_{w}[\gamma^{(1)}\gamma^{(2)}\ldots\gamma^{(N)}]\|_{\op}\leq D\rho^{N}.
\]
Here we view $\gamma^{(1)}\gamma^{(2)}\ldots\gamma^{(N)}$ as self-maps
of $\Delta,$ using the fixed Euclidean metric on $\Delta$, $d_{w}$
is the total derivative of the map at $w\in\Delta,$ and $\|\bullet\|_{\op}$
is the operator norm of the map between tangent spaces (using the
$\ell^{2}$ norms coming from the metric).
\end{prop}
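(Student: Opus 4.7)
The plan is to estimate the total derivative $d_{w}[\gamma]$ of a single generator $\gamma=\gamma_{n-1}^{A}\gamma_{j}\in T_{\Gamma}$ acting on $\Delta$, and then iterate via the chain rule. I parametrize $\Delta$ by the free coordinates $(w_{1},\ldots,w_{n-2})\in\R^{n-2}$, taking $w_{n-1}=1-\sum_{i=1}^{n-2}w_{i}$. By Lemma \ref{lem:branchcalcs}, $\gamma$ sends $(w_{1},\ldots,w_{n-1},1)$ to the tuple whose surviving old coordinates are rescaled by $1/d(w)$, where $d(w)=1+(A+1)(1-w_{j})$, together with a new coordinate equal to $(1+A(1-w_{j}))/d(w)$, followed by reordering. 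Since $w_{j}\leq 1/2$ for $j\leq n-2$, we have $d(w)\geq 3/2$, with stronger lower bounds $d(w)\geq 1+(A+1)/2$ when $A\geq 1$.

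First I would compute the Jacobian explicitly in the free coordinates. It decomposes as a dominant scaling block $\frac{1}{d(w)}I$ on the surviving coordinates, plus a rank-one correction coming from $\partial d/\partial w_{j}=-(A+1)$, plus the derivative of the new coordinate which has magnitude $1/d(w)^{2}$. The reordering is a permutation on coordinates, so it contributes only a factor of unit operator norm. Assembling these pieces and bounding the rank-one correction crudely using $w_{i}\leq 1$ and $(A+1)/d(w)^{2}\leq 1/d(w)$, I expect a pointwise bound of the form $\|d_{w}[\gamma]\|_{\op}\leq C_{0}/d(w)$ for some constant $C_{0}=C_{0}(n)$. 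Iterating via the chain rule then yields
\[
\|d_{w}[\gamma^{(1)}\gamma^{(2)}\cdots\gamma^{(N)}]\|_{\op}\leq \prod_{k=1}^{N}\frac{C_{0}}{d(w^{(k)})},
\]
where $w^{(k)}$ is the point reached after the first $k$ maps. Since each $d(w^{(k)})\geq 3/2$, each factor is bounded by $2C_{0}/3$, and provided $2C_{0}/3<1$ this directly produces the stated bound with $\rho=2C_{0}/3$ and $D=1$.

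The main obstacle is ensuring the rank-one correction does not inflate $C_{0}$ beyond $3/2$, particularly in the worst case $A=0$, where the correction has size comparable to the dominant scaling. If a single-step bound in the Euclidean operator norm proves inadequate, the fallback is to group generators into blocks of a fixed length $m$ and show uniform net contraction over each block. The key inputs for this fallback are that (i) when any generator in the block has $A\geq 1$, that step contributes $d\geq 5/2$, affording extra room; and (ii) after one generator is applied, the coordinate $w_{j}$ relevant to the next step cannot remain at $1/2$, since the action strictly redistributes mass in the simplex. A careful analysis along these lines, or alternatively passage to the Hilbert projective metric on the cone spanned by $\H$ combined with the equivalence of Hilbert and Euclidean metrics on the compact $\mathfrak{K}(\Gamma')$, should yield the required uniform exponential contraction.
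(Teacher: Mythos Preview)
Your primary approach does not work, and the paper's explicit computations make this clear. For the generator $\gamma_{n-2}\in T_{\Gamma}$ (the case $A=0$, $j=n-2$), the $\ell^{1}$ operator norm of the Jacobian on $\Delta$ equals $2/(2-w_{n-2})$, which attains $4/3$ when $w_{n-2}=1/2$; so your hoped-for bound $C_{0}/d(w)$ with $C_{0}<3/2$ is impossible, since here $d(w)=2-w_{n-2}$ and the norm is exactly $2/d(w)$. Worse, the generator $\gamma_{n-1}\gamma_{n-2}\in T_{\Gamma}$ has $\ell^{1}$ norm $2/(3-2w_{n-2})$, which reaches $1$ at $w_{n-2}=1/2$, so even on the image $\gamma_{j}(\Delta)$ a single $T_{\Gamma}$-generator need not strictly contract. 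Your intermediate estimate $(A+1)/d(w)^{2}\leq 1/d(w)$ is also false for $A\geq 2$ and $w_{j}$ near $1/2$.

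Your block fallback is the right idea and is precisely what the paper does, but the execution is substantial and you have not attempted it. The paper partitions the image $\gamma(\Delta)$ into two regions $\Delta_{\mathrm{core}}$ and $\Delta_{\mathrm{cusp}}$ according to whether the last applied generator was $\gamma_{j}$ with $j\leq n-2$ or $\gamma_{n-1}$, establishes sharp coordinate bounds on each ($w_{n-2}\leq 1/2$, $w_{n-3}\leq 1/4$ on the core; $w_{n-2}\leq 1/3$, $w_{n-3}\leq 1/5$ on the cusp), and then computes the $\ell^{1}$ Jacobian norms of ten specific compositions of one, two, or three of the $\gamma_{i}$. A case analysis combining these yields $\|d(\gamma\gamma')\|_{1}\leq 24/25$ for every pair $\gamma,\gamma'\in T_{\Gamma}$ restricted to $\Delta_{0}=\Delta_{\mathrm{core}}\cup\Delta_{\mathrm{cusp}}$. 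Your heuristic (ii)--(iii) gestures at the mechanism but does not supply the estimates; in particular the worst case is the sequence $\gamma_{n-2},\gamma_{n-1}\gamma_{n-2},\gamma_{n-2},\ldots$ with all $A\in\{0,1\}$, where no step gives the ``extra room'' from large $A$.

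The Hilbert-metric fallback is both circular and inapplicable. It is circular because $\mathfrak{K}(\Gamma')$ is defined in the paper as the attractor of the IFS, whose existence already presupposes the uniform contraction you are trying to prove. It is inapplicable because the linear maps $\gamma_{j}$ do not send the cone $\R_{+}\cdot\H$ into its interior (boundary faces with $y_{1}=0$ are preserved), so Birkhoff contraction does not hold; and the limit set meets $\partial\Delta$ (already for $n=3$ it is all of $\Delta$), so Hilbert and Euclidean metrics are not comparable there.
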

When $n=4$, modulo translation between the Rauzy gasket and our dynamical system, a proof of Proposition  \ref{prop:uniformly-contracting} was outlined by Arnoux and Starosta in \cite[Lemma 2]{AS} and given in more detail by Avila, Hubert, and Skripchenko in \cite[Lemma 13]{AHS1}. 

We will prove Proposition \ref{prop:uniformly-contracting} 
for all $n\geq 4$
in Section
\ref{sec:dynamics}. The dynamical Proposition \ref{prop:uniformly-contracting}
gets brought into play by the following two-norm inequality with origins
in the work of Ionescu Tulcea and Marinescu \cite{IT-M}.
\begin{lem}
\label{lem:Marinescu-Tulcea}There is $C>0$ such that for any $Q\in\N$
and $\Re(s)>1$

\[
\|\nabla\L_{s}^{Q}[f](w)\|_{2}\leq C|s|\L_{s}^{Q}[|f|](w)+D\rho^{Q}\L_{s}^{Q}[\|\nabla f\|_{2}](w)
\]
for all $w\in\Delta.$ We write $\|\bullet\|_{2}$ for the pointwise
$\ell^{2}$ norm in an individual tangent space fiber.\end{lem}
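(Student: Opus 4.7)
The plan is to differentiate the expanded formula
\[
\L_s^Q[f](w) = \sum_{\gamma} \left(\frac{w_n}{(\gamma.w)_n}\right)^s f(\gamma.w),
\]
where $\gamma$ runs over the $Q$-fold products from $T_\Gamma$, and split the derivative via the product rule into a ``weight'' term and a ``differentiation'' term. The differentiation term is $\sum_\gamma (w_n/(\gamma.w)_n)^s (d_w\gamma)^T \nabla f(\gamma.w)$; Proposition~\ref{prop:uniformly-contracting} gives $\|d_w\gamma\|_{\op} \leq D\rho^Q$ for every $Q$-fold product $\gamma$, so this piece is dominated in $\ell^2$ norm by $D\rho^Q\, \L_{\Re(s)}^Q[\|\nabla f\|_2](w)$ termwise. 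The weight term is $s \sum_\gamma (w_n/(\gamma.w)_n)^s\, \nabla\tau_\gamma(w)\, f(\gamma.w)$ with $\tau_\gamma(w) := \log\bigl(w_n/(\gamma.w)_n\bigr)$. Consequently the whole lemma reduces to a \emph{uniform} bound $\|\nabla\tau_\gamma(w)\|_2 \leq C$ over all $w \in \Delta$ and all finite products $\gamma$ of elements of $T_\Gamma$.

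To establish this uniform cocycle-gradient bound I will first handle a single generator $\gamma_{n-1}^A\gamma_j \in T_\Gamma$. Lemma~\ref{lem:branchcalcs} gives $\tau_{\gamma_{n-1}^A\gamma_j}(w) = -\log\bigl(1+(A+1)(1-w_j)\bigr)$, so the gradient in the free coordinates $(w_1,\ldots,w_{n-2})$ has magnitude $\frac{A+1}{1+(A+1)(1-w_j)}$. Since $w_j \leq 1/2$ on $\Delta$ for $j \leq n-2$ (as observed in the proof of Lemma~\ref{lem:transferwelldefinedoncontinuous}), one has $1 - w_j \geq 1/2$, and this magnitude is bounded by $2$ uniformly in $A$ and $j$.

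For a general composition $\gamma = \gamma^{(Q)}\cdots\gamma^{(1)}$ I will telescope
\[
\tau_\gamma(w) = \sum_{k=1}^Q \tau_{\gamma^{(k)}}\bigl(\gamma^{(k-1)}\cdots\gamma^{(1)}.w\bigr)
\]
and apply the chain rule, obtaining
\[
\nabla\tau_\gamma(w) = \sum_{k=1}^Q \bigl(d_w[\gamma^{(k-1)}\cdots\gamma^{(1)}]\bigr)^T\, \nabla\tau_{\gamma^{(k)}}\bigl(\gamma^{(k-1)}\cdots\gamma^{(1)}.w\bigr).
\]
Proposition~\ref{prop:uniformly-contracting} bounds the $k$-th operator-norm factor by $D\rho^{k-1}$, while the single-generator estimate above bounds each $\|\nabla\tau_{\gamma^{(k)}}\|_2$ by $2$. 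Summing the geometric series gives $\|\nabla\tau_\gamma(w)\|_2 \leq 2D/(1-\rho)$ uniformly in $Q$ and in the choice of generators. Substituting this into the weight term produces the first summand of the claimed inequality with $C = 2D/(1-\rho)$.

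The main obstacle is the uniformity in $A$ of the single-generator bound: the exponent $A$ of $\gamma_{n-1}$ is unbounded, and one might fear the gradient blows up, but the explicit quotient $\tfrac{A+1}{1+(A+1)(1-w_j)}$ makes the cancellation transparent. I also note one bookkeeping point: for complex $s$ the symbol $\L_s^Q[|f|]$ in the statement is to be read as $\L_{\Re(s)}^Q[|f|]$, which is what the triangle inequality produces from $\bigl|(w_n/(\gamma.w)_n)^s\bigr| = (w_n/(\gamma.w)_n)^{\Re(s)}$; with this convention the inequality in the lemma follows immediately.
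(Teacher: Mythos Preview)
Your argument is correct and is precisely the ``chain rule plus uniform contraction'' computation the paper alludes to; your telescoping of $\tau_\gamma$ and the resulting geometric-series bound $\|\nabla\tau_\gamma\|_2\le 2D/(1-\rho)$ is the standard way to make this precise, and your reading of $\L_s^Q[|f|]$ as $\L_{\Re(s)}^Q[|f|]$ is the intended one. The only point the paper stresses that you leave implicit is that term-by-term differentiation of the countable sum over $T_\Gamma^{(Q)}$ is justified, which follows from the same uniform bounds you establish together with the absolute convergence of $\L_{\Re(s)}^Q[1]$ for $\Re(s)>1$.
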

\begin{proof}
This is standard given Proposition \ref{prop:uniformly-contracting}:
it essentially boils down to the chain rule. The only thing to take
care with are the infinite sums that arise, but these are all absolutely
convergent when $\Re(s)>1.$
\end{proof}

We can now prove Lemma \ref{lem:holomorphic-bounded}.
\begin{proof}[Proof of Lemma \ref{lem:holomorphic-bounded}]
We are proving $s\mapsto\L_{s}$ is a holomorphic mapping to bounded
operators on $C^{1}(\Delta).$ If we truncate the summation going
into the expression (\ref{eq:transder-defn-explicit}) for $\L_{s}$
at some fixed $L$ to form 
\[
\L_{s}^{(L)}=\sum_{j\in[n-2]}\sum_{A\leq L}\frac{1}{(1+(A+1)(1-w_{j}))^{s}}f(\g_{n-1}^{A}\g_{j}.w);
\]
the resulting $\L_{s}^{(L)}$ is easily seen to be holomorphic by taking
a complex derivative. So it remains to show that $\L_{s}^{L}\to\L_{s}$
uniformly on compact sets, say in the norm topology. But the tail
consists of $n-2$ terms of the form
\[
(\L_{s}-\L_{s}^{(L)})[f](w)=\sum_{A>L}\frac{1}{(1+(A+1)(1-w_{j}))^{s}}f(\g_{n-1}^{A}\g_{j}.w).
\]
Then $\|\L_{s}-\L_{s}^{(L)}\|_{C^{0}}\to0$ as $L\to\infty$ and this
is uniform for $s$ in $W$, a compact subset of $\Re(s)>1.$ On the
other hand, the proof of Lemma \ref{lem:Marinescu-Tulcea} also applies
to $\L_{s}-\L_{s}^{(L)},$ so applying it when $Q=1$ gives
\[
\|\nabla(\L_{s}-\L_{s}^{(L)})[f]\|_{\infty}\leq C|s|\|(\L_{s}-\L_{s}^{(L)})[|f|]\|_{\infty}+D\rho\|(\L_{s}-\L_{s}^{(L)})[\|\nabla f\|_{2}]\|_{\infty}.
\]
This implies
\[
\|\L_{s}-\L_{s}^{(L)}\|_{C^{1}(\Delta)}\ll_{W}\|\L_{s}-\L_{s}^{(L)}\|_{C^{0}(\Delta)},
\]
which we've established goes to zero uniformly on $W.$
\end{proof}

The proof of the Ruelle-Perron-Frobenius Theorem \ref{thm:RPF} now
proceeds either via use of Birkhoff cones as in Liverani's paper \cite{LIVERANI}
or by a more direct approach as in Pollicott \cite[Lemma 2.3]{POLLICOTTRVZ}.
The classical proof of this Theorem for subshifts of finite type can
be found in \cite[Theorem 2.2]{PP}. In any approach Lemma \ref{lem:Marinescu-Tulcea}
is the key input. The uniform spectral gap stated in Part \ref{enu:RPFspectrum}
of Theorem \ref{thm:RPF} is a consequence of the uniformity of Lemma
\ref{lem:Marinescu-Tulcea} for $s$ in a fixed compact subinterval
of $(1,\infty).$

\subsection{Behavior of the eigenvalue}

\label{sub:eigenvalue-behaviour}

In this section we prove Proposition \ref{prop:The-eigenfunction-decreases}.
The statement that $\lambda_{s}$ is real analytic on $(1,\infty)$
follows from the fact we noted in the previous Section \ref{sub:spectral-theory-transfer-operator}
that by perturbation theory in combination with Theorem \ref{thm:RPF}
Part \ref{enu:RPFspectrum} 
\[
s\mapsto\lambda_{s}
\]
is holomorphic in a neighborhood of $(1,\infty)$ in $\Re(s)>1.$
Notice that we have the bound
\begin{eqnarray*}
\L_{s}[f](w) & = & \sum_{j\in[n-2]}\sum_{A\in\N}\frac{1}{(1+(A+1)(1-w_{j}))^{s}}f(\g_{n-1}^{A}\g_{j}.w)\\
 & \leq & (n-2)\|f\|_{\infty}\sum_{A\in\N}\frac{1}{(1+\frac{1}{2}(A+1))^{s}}\leq2(n-2)\|f\|_{\infty}\sum_{A\in\N}\frac{1}{(3+A)^{s}}.
\end{eqnarray*}
Letting $f=h_{s}$ and $w$ such that $h_{s}(w)=\|h_{s}\|_{\infty}$
gives 
\[
\lambda_{s}\leq2(n-2)\sum_{A\in\N}\frac{1}{(3+A)^{s}}
\]
so $\lambda_{s}\to0$ as $s\to\infty.$

It remains to show that $\lambda_{s}$ is strictly decreasing in $s.$
Let $I$ be a fixed compact subinterval of $(1,\infty)$. By Theorem
\ref{thm:RPF} $\lambda_{s}^{-N}\L_{s}^{N}1$ converges in $C^{1}$
norm to $h_{s}$ and this convergence is uniform for $s\in I.$ This
implies
\begin{equation}
\log\lambda_{s}=\frac{\log\left(\L_{s}^{N}[1](w)\right)}{N}+o(1)\label{eq:lambda_s-approx}
\end{equation}
where the error is uniform in $s\in I$ and $w\in\Delta$. We calculate
\[
\L_{s}^{N}[1](w)=\sum_{\g\in(T_{\Gamma})^{N}}\left(\frac{(\gamma.w)_{n}}{w_{n}}\right)^{-s},\quad\frac{d}{ds}\L_{s}^{N}[1](w)=\sum_{\g\in(T_{\Gamma})^{N}}-\log\left(\frac{(\gamma.w)_{n}}{w_{n}}\right)\left(\frac{(\gamma.w)_{n}}{w_{n}}\right)^{-s}.
\]
Now we make the \emph{Claim}:\emph{ }There is some $c>0$ such that
\[
\log\left(\frac{(\gamma.w)_{n}}{w_{n}}\right)\geq cN.
\]
for all $\gamma\in(T_{\Gamma})^{N}.$ Assuming the Claim we get
\[
\frac{d}{ds}\L_{s}^{N}[1](w)\leq-cNL_{s}^{N}[1](w)
\]
 and hence
\[
\frac{d}{ds}\log\L_{s}^{N}[1](w)\leq-cN.
\]
This means $\log\lambda_{s}$ is a uniform limit of functions with
derivatives bounded above by a negative constant, so $\lambda_{s}$
must be strictly decreasing as required. 

To prove the Claim it is enough to show (by expanding $\log(\g.w)_{n}-\log w_{n}$
as a telescoping sum) that for all $w\in 
\Delta
$ and $\gamma'=\gamma_{n-1}^{A}\gamma_{j}\in T_{\Gamma}$ 

\[
\frac{(\gamma'.w)_{n}}{w_{n}}=1+(A+1)(1-w_{j})\geq c.
\]
This is true with $c=3/2$ since $w_{j}\leq1/2.$ This completes the
proof of Proposition \ref{prop:The-eigenfunction-decreases}.

\section{Proof of uniform contraction}

\label{sec:dynamics}

In this section we prove Proposition \ref{prop:uniformly-contracting}
asserting that the elements of $T_{\Gamma}$ eventually uniformly contract
$\Delta.$

\subsection{Setup}

We define the sets 
\[
\Delta\equiv\{(w_{1},w_{2},\ldots,w_{n-2},w_{n-1}):0\leq w_{1}\leq w_{2}\leq 
\ldots
\leq w_{n-2}\leq w_{n-1}\leq1,\sum_{i\in[n-1]}w_{i}=1\},
\]
\[
\core\equiv\{(w_{1},w_{2},\ldots,w_{n-2},w_{n-1})\in\Delta:0\leq w_{n-1}-\sum_{j\in[n-2]}w_{j}\leq w_{n-2}\},
\]
and 
\[
\cusp\equiv\{(w_{1},w_{2},\ldots,w_{n-2},w_{n-1})\in\Delta:w_{n-1}-\sum_{j\in[n-2]}w_{j}\geq w_{n-2}\}
\]
where we use the notation $[N]=\{1,2,\ldots,N\}$. We also define
the set 
\[
\deltao\equiv\core\cup\cusp.
\]

Recall that the elements of $T_{\Gamma}$ are all of the form $\gamma=\gamma_{n-1}^{L}\gamma_{i}$
where $L\in\mathbb{N}$ and $i=1,2,\ldots,n-2$. Note that for each
$w\in\Delta$, we have $\gamma_{i}(w)\in\core$ for $i=1,2,\ldots,n-2$
and $\gamma_{n-1}(w)\in\cusp$. In particular, $\gamma(w)\in\deltao$
for all $\gamma\in T_{\Gamma}$ and $w\in\Delta.$

From now on, we choose to use $n-2$ coordinates in $\Delta$ instead
of $n-1$, using the relationship $w_{n-1}=1-{\displaystyle \sum_{i\in[n-2]}w_{i}}$.

Note that on $\deltao$ we have ${\displaystyle \sum_{j\in[n-2]}w_{j}\leq\frac{1}{2}}$
by combining the conditions that 
\[
{\displaystyle w_{n-1}\geq\sum_{j\in[n-2]}w_{j}}
\]
and 
\[
1-w_{n-1}={\displaystyle \sum_{j\in[n-2]}w_{j}}.
\]
Similarly, it is easy to show that on $\core$ we have $w_{n-2}\leq\frac{1}{2}$
and $w_{n-3}\leq\frac{1}{4}$, while on $\cusp$ we have $w_{n-2}\leq\frac{1}{3}$
and $w_{n-3}\leq\frac{1}{5}$.
\begin{rem}
\label{rem:norms-equivalent}It is clear that Proposition \ref{prop:uniformly-contracting}
can be proved with the local $\ell^{2}$ operator norms replaced by
local $\ell^{1}$ norms, since the norms are equivalent, possibly
at the expense of a larger $N.$
\end{rem}

\subsection{Overview of the proof of Proposition \ref{prop:uniformly-contracting}}

We will now prove Proposition \ref{prop:uniformly-contracting} (the
$\ell^{1}$ norm variant). We will appeal to the following bounds.
\begin{gather}
\onenorm{d\gamma_{i}}\leq\dfrac{2}{2-w_{i}}\leq\begin{cases}
\frac{6}{5} & \on\cusp\\
\frac{4}{3} & \on\core
\end{cases},1\leq i\leq n-3\label{i}\\
\onenorm{d\gamma_{n-1}}=\dfrac{1+2(w_{1}+w_{2}+\ldots w_{n-2})-2w_{1}}{(1+w_{1}+w_{2}+\ldots w_{n-2})^{2}}\leq1\on\deltao\label{n-1}\\
\onenorm{d(\gamma_{i}\circ\gamma_{j})}\leq \dfrac{2}{4-2w_{j}-w_{i}}\leq\frac{4}{5}\on\deltao,1\leq i<j\leq n-2\label{smalli,j}\\
\onenorm{d(\gamma_{i}\circ\gamma_{j})}\leq \dfrac{2}{4-2w_{j}-w_{i+1}}\leq\frac{4}{5}\on\deltao,1\leq j\leq i<n-2\label{bigi,j}\\
\onenorm{d(\gamma_{n-2}\circ\gamma_{j})} \leq \dfrac{4+2(w_{1}+\ldots+w_{n-2})-2w_{1}-3w_{j}}{3+(w_{1}+\ldots+w_{n-2})-2w_{j}}\leq\frac{4}{5}\on\deltao,1\leq j\leq n-2\label{n-2,j}\\
\onenorm{d(\gamma_{n-1}\circ\gamma_{i})} \leq \dfrac{2}{3-2w_{i}}\leq\begin{cases}
\frac{10}{13} & \on\cusp\\
\frac{4}{5} & \on\core
\end{cases},1\leq i\leq n-3\label{n-1,i}\\
\onenorm{d(\gamma_{n-1}\circ\gamma_{n-2})} \leq \dfrac{2}{3-2w_{n-2}}\leq\begin{cases}
\frac{6}{7} & \on\cusp\\
1 & \on\core
\end{cases}\label{n-1,n-2}\\
\onenorm{d(\gamma_{i}\circ\gamma_{n-1}\circ\gamma_{n-2})} \leq \dfrac{2}{6-4w_{n-2}-w_{i}}\leq\frac{4}{7}\on\deltao,1\leq i\leq n-3\label{i,n-1,n-2}\\
\onenorm{d(\gamma_{n-2}\circ\gamma_{n-1}\circ\gamma_{n-2})} \leq \dfrac{7+2(w_{1}+\ldots+w_{n-2})-2w_{1}-6w_{n-2}}{5+(w_{1}+\ldots+w_{n-2})-4w_{n-2}}\leq\frac{32}{49}\on\deltao\label{n-2,n-1,n-2}\\
\onenorm{d(\gamma_{n-1}\circ\gamma_{n-1}\circ\gamma_{n-2})}\leq \dfrac{2}{4-3w_{n-2}}\leq\begin{cases}
\frac{2}{3} & \on\cusp\\
\frac{4}{5} & \on\core
\end{cases}\label{n-1,n-1,n-2}
\end{gather}

We will prove these bounds below by direct calculation. Using these
bounds we can prove the following result for any $n\geq3$ which implies
Proposition \ref{prop:uniformly-contracting} via Remark \ref{rem:norms-equivalent}.
\begin{lem}
Given the bounds (\ref{i})-(\ref{n-1,n-1,n-2}), $\onenorm{d(\gamma_{n-1}^{L}\circ\gamma_{i}\circ\gamma_{n-1}^{K}\circ\gamma_{j})\big|_{\deltao}}\leq\frac{24}{25}$
for each $L,K\in\mathbb{N}$, and each $i,j=1,2,\ldots,n-2$. \end{lem}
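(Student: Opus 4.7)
I will apply the chain rule to expand
\[
d(\gamma_{n-1}^L \circ \gamma_i \circ \gamma_{n-1}^K \circ \gamma_j)|_w = d\gamma_{n-1}^L|_{p_3}\cdot d\gamma_i|_{p_2} \cdot d\gamma_{n-1}^K|_{p_1} \cdot d\gamma_j|_w,
\]
with $p_1=\gamma_j(w)$, $p_2=\gamma_{n-1}^K(p_1)$, $p_3=\gamma_i(p_2)$, and bound the result by submultiplicativity of the $\ell^1$ operator norm after regrouping consecutive factors into compositions matching the listed bounds. Two structural facts drive the argument: by (\ref{n-1}), every $\gamma_{n-1}$ factor is ``free'' ($\onenorm{d\gamma_{n-1}} \leq 1$ on $\deltao$); and since $\gamma_{n-1}(\Delta) \subset \cusp$, whenever $K \geq 1$ the intermediate point $p_2$ lies in $\cusp$, where the tighter cusp forms of the composition bounds are in force.

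The proof is then a case analysis on $(L,K)$. If $K=0$, group $\gamma_i \circ \gamma_j$ and apply (\ref{smalli,j})--(\ref{n-2,j}) together with the free $\gamma_{n-1}^L$ for a bound of $4/5$. If $L \geq 1$ and $K \geq 1$, use the chain-rule identity $d\gamma_{n-1}^M \cdot d\gamma_a = d\gamma_{n-1}^{M-1} \cdot d(\gamma_{n-1}\circ\gamma_a)$ on both the $L$-block and the $K$-block, reducing the product to $\onenorm{d(\gamma_{n-1}\circ\gamma_i)|_{p_2}} \cdot \onenorm{d(\gamma_{n-1}\circ\gamma_j)|_w}$; since $p_2 \in \cusp$ the first factor is at most $6/7$ by (\ref{n-1,i})/(\ref{n-1,n-2}), the second is at most $1$ on $\deltao$, and the worst product is $6/7$. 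The delicate case is $L=0$, $K \geq 1$, in which the outer $\gamma_i$ has no $\gamma_{n-1}$ to absorb it, so I regroup inward. If $j \leq n-3$, write $\gamma_i \circ \gamma_{n-1}^{K-1} \circ (\gamma_{n-1}\circ\gamma_j)$ and combine a pointwise cusp bound on $\gamma_i$ with (\ref{n-1,i})/(\ref{n-1,n-2}) for a product of $(6/5)(4/5)=24/25$. If $j = n-2$ and $K=1$, (\ref{i,n-1,n-2}) or (\ref{n-2,n-1,n-2}) applies directly to the whole composition. If $j = n-2$ and $K \geq 2$, regroup as $\gamma_i \circ \gamma_{n-1}^{K-2} \circ (\gamma_{n-1}^2 \circ \gamma_{n-2})$ and pair the pointwise cusp bound on $\gamma_i$ with (\ref{n-1,n-1,n-2}) for the same $24/25$.

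The main obstacle will be a boundary issue at $i=n-2$: the last two subcases above use the pointwise cusp bound $\onenorm{d\gamma_{n-2}} \leq 6/5$, which is not explicitly on the list because (\ref{i}) is recorded only for $1 \leq i \leq n-3$. However, a direct computation of the Jacobian of $\gamma_{n-2}$ in the coordinates $(w_1,\ldots,w_{n-2})$ has exactly the same $\ell^1$ column-sum structure as for $\gamma_i$ with $i \leq n-3$, yielding $\onenorm{d\gamma_{n-2}} \leq 2/(2-w_{n-2}) \leq 6/5$ on $\cusp$ by an identical calculation; I would state this extension of (\ref{i}) as a supplementary bound at the start of the proof. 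With this in hand the case analysis closes routinely with the target bound $24/25$, attained (in the product estimate) in the subcase $L=0$, $K \geq 1$, $j \leq n-3$ and comfortably beaten in all other cases.
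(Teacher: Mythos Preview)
Your proposal is correct and follows essentially the same case analysis as the paper's proof: the same three cases on $(L,K)$, the same regroupings of consecutive factors, and the same numerical products. Your explicit note that the cusp bound $\onenorm{d\gamma_{n-2}}\leq 6/5$ must be supplied (because (\ref{i}) is labeled only for $i\leq n-3$) is well taken---the paper in fact uses this bound implicitly in Case~III, and its own derivation of (\ref{i}) covers $i=n-2$ with $w_{n-2}\leq 1/3$ on $\cusp$; one small slip is that this extension is needed in your first and third subcases of $L=0,\,K\geq 1$ (wherever $\onenorm{d\gamma_i|_{\cusp}}$ appears with $i$ possibly equal to $n-2$), not the ``last two.''
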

\begin{proof}
\emph{T}hroughout this proof, we repeatedly use the fact that $\gamma_{k}(w)\in\core$
for $k=1,2,\ldots,n-2$ and $\gamma_{n-1}(w)\in\cusp$. We distinguish
$3$ cases.

\noindent \textbf{Case I: $L\geq1,K\geq1$:} \\
Using equations (\ref{n-1}), (\ref{n-1,i}) and (\ref{n-1,n-2}),
we have 
\begin{multline*}
\onenorm{d(\gamma_{n-1}^{L}\circ\gamma_{i}\circ\gamma_{n-1}^{K}\circ\gamma_{j})\big|_{\deltao}}\\
\leq\onenorm{d\gamma_{n-1}^{L-1}\big|_{\cusp}}\onenorm{d(\gamma_{n-1}\circ\gamma_{i})\big|_{\cusp}}\onenorm{d\gamma_{n-1}^{K-1}\big|_{\cusp}}\onenorm{d(\gamma_{n-1}\circ\gamma_{j})\big|_{\deltao}}\leq1\cdot\frac{6}{7}\cdot1\cdot1<\frac{24}{25}.
\end{multline*}
\\
 \textbf{Case II: $L\geq0,K=0$:} \\
Using equations (\ref{n-1}), (\ref{smalli,j}), (\ref{bigi,j}),
(\ref{n-2,j}), we have 
\[
\onenorm{d(\gamma_{n-1}^{L}\circ\gamma_{i}\circ\gamma_{j})\big|_{\deltao}}\leq\onenorm{d\gamma_{n-1}^{L}\big|_{\core}}\onenorm{d(\gamma_{i}\circ\gamma_{j})\big|_{\deltao}}\leq1\cdot\frac{4}{5}<\frac{24}{25}.
\]
\\
 \textbf{Case III: $L=0,K\geq1$:} \\
We first suppose that $j\leq n-3$. Then by equations (\ref{i}),
(\ref{n-1}), (\ref{n-1,i}) we have 
\[
\onenorm{d(\gamma_{i}\circ\gamma_{n-1}^{K}\circ\gamma_{j})\big|_{\deltao}}\leq\onenorm{d\gamma_{i}\big|_{\cusp}}\onenorm{d\gamma_{n-1}^{K-1}\big|_{\cusp}}\onenorm{d(\gamma_{n-1}\circ\gamma_{j})\big|_{\deltao}}\leq\frac{6}{5}\cdot1\cdot\frac{4}{5}=\frac{24}{25}.
\]
Finally, if $j=n-2$ we are left with two subcases. If $K=1$, then
by equations (\ref{i,n-1,n-2}) and (\ref{n-2,n-1,n-2}) we have 
\[
\onenorm{d(\gamma_{i}\circ\gamma_{n-1}\circ\gamma_{n-2})\big|_{\deltao}}\leq\frac{32}{49}<\frac{24}{25}.
\]
Otherwise, we have $K\geq2$ and by equations (\ref{i}), (\ref{n-1}),
(\ref{n-1,n-1,n-2}) we have 
\begin{eqnarray*}
\onenorm{d(\gamma_{i}\circ\gamma_{n-1}^{K}\circ\gamma_{n-2})\big|_{\deltao}} & \leq & \onenorm{d\gamma_{i}\big|_{\cusp}}\onenorm{d\gamma_{n-1}^{K-2}\big|_{\cusp}}\onenorm{d(\gamma_{n-1}\circ\gamma_{n-1}\circ\gamma_{n-2})\big|_{\deltao}}\\
 & \leq & \frac{6}{5}\cdot1\cdot\frac{4}{5}=\frac{24}{25}.
\end{eqnarray*}
\end{proof}
In the remainder of
 this section
   we prove equations (\ref{i})-(\ref{n-1,n-1,n-2})
by direct calculation. In all following sections, we define 
\[
 w\equiv(w_{1},w_{2},\ldots,w_{n-2},1-\sum_{k=1}^{n-2}w_{k},1),
\]
and
\[
 \wsum\equiv{\displaystyle \sum_{k=1}^{n-2}w_{k}}.
\]
Also recall that the $\onenorm{\cdot}$ of a matrix is equal to the
maximum over columns of the matrix of the sum of the absolute values
of the column. From now on, we call such a sum an \emph{absolute column
sum}.

\subsection{Proof of equations (\ref{i})-(\ref{n-1,n-1,n-2})}

\subsubsection*{Proof of equation (\ref{i})\label{firsteqn}}

For $i=1,2,\ldots,n-2$ we have 
\[
\gamma_{i}(w)=(w_{1},w_{2},\ldots,\widehat{w_{i}},\ldots,w_{n-2},1-\wsum,1,2-w_{i}),
\]
which, after projectivizing and removing the placeholder components,
gives 
\[
\gamma_{i}(w)=\bigg(\frac{w_{1}}{2-w_{i}},\frac{w_{2}}{2-w_{i}},\ldots,\widehat{\frac{w_{i}}{2-w_{i}}},\ldots,\frac{w_{n-2}}{2-w_{i}},\frac{1-\wsum}{2-w_{i}}\bigg)
\]
which is a function in $(n-2)$ variables with $(n-2)$ components.
The $(n-2)\times(n-2)$ total derivative $d\gamma_{i}$ is given by
the following matrix 

{\small{}\begin{equation*}
\kbordermatrix{&1&2&3&\ldots&i-1&i&i+1&\ldots & n-3 & n-2\\ 1 &   \dfrac{1}{2-w_i}  & 0 & 0 & \ldots & 0 & \dfrac{w_1}{(2-w_i)^2} & 0 & \ldots & 0 & 0 \\ 2 &  0  & \dfrac{1}{2-w_i} & 0 & \ldots & 0 & \dfrac{w_2}{(2-w_i)^2} & 0 & \ldots & 0 & 0 \\ 3 &  0  & 0 & \dfrac{1}{2-w_i}  & \ldots & 0 & \dfrac{w_2}{(2-w_i)^2} & 0 & \ldots & 0 & 0 \\ \vdots & \vdots & \vdots & \vdots & \ddots & \vdots & \vdots & \vdots & \ddots & \vdots & \vdots \\ i-1 &0 & 0 & 0 & \ldots & \dfrac{1}{2-w_i} & \dfrac{w_{i-1}}{(2-w_i)^2} & 0 & \ldots & 0 & 0 \\ i & 0 & 0 & 0 & \ldots & 0 & \dfrac{w_{i+1}}{(2-w_i)^2} & \dfrac{1}{2-w_i} & \ldots & 0 & 0 \\ \vdots & \vdots & \vdots & \vdots & \ddots & \vdots & \vdots & \vdots & \ddots & \vdots & \vdots \\ n-3 & 0 & 0 & 0 & \ldots & 0 & \dfrac{w_{n-2}}{(2-w_i)^2} & 0 & \ldots & 0 & \dfrac{1}{2-w_i} \\ n-2 & \dfrac{-1}{2-w_i} & \dfrac{-1}{2-w_i} & \dfrac{-1}{2-w_i} & \ldots & \dfrac{-1}{2-w_i} & \dfrac{-1 + w_i - \wsum}{(2-w_i)^2} & \dfrac{-1}{2-w_i} & \ldots & \dfrac{-1}{2-w_i} & \dfrac{-1}{2-w_i} \\ }
\end{equation*}}where the row and column indices are indicated to the left and above
respectively. Each of these partial derivatives is immediate, except
for the $(n-2,i)$ entry which follows from an application of the
quotient rule. Note that the sign of entry $(n-2,i)$ is negative
on $\deltao$. The signs of the other entries are self-evident.

The absolute column sum for each column $k$ with $k\neq i$ is 
\[
C_{k}=\dfrac{2}{2-w_{i}}.
\]
For column $k=i$ the absolute column sum is 
\[
C_{i}=\dfrac{1+2\wsum-2w_{i}}{(2-w_{i})^{2}}.
\]
We must compute which absolute column sum is maximal on $\deltao$.
Note that on $\deltao$ we have $\wsum\leq\frac{1}{2}$. Furthermore,
we have the following equivalences: 
\[
C_{i}\leq C_{k},k\neq i\hspace{0.3in}\Leftrightarrow\hspace{0.3in}1+2\wsum-2w_{i}<4-2w_{i}\hspace{0.3in}\Leftrightarrow\hspace{0.3in}\wsum<\frac{2}{3}.
\]
Any column $k\neq i$ is maximal and $\onenorm{d\gamma_{i}} \leq \dfrac{2}{2-w_{i}}$
on $\deltao$. For each $i$, we have $w_{i}\leq\frac{1}{3}$ on $\cusp$,
and $w_{i}\leq\frac{1}{2}$ on $\core$. This gives the bound that
$\onenorm{d\gamma_{i}}\leq\frac{6}{5}$ on $\cusp$ and $\leq\frac{4}{3}$
on $\core$, proving equation (\ref{i}). 


\subsubsection*{Proof of equation (\ref{n-1})}

We have 
\[
\gamma_{n-1}(w)=(w_{1},w_{2},\ldots,w_{n-2},1,1+\wsum)
\]
which after projectivizing and removing placeholder components becomes
\[
\gamma_{n-1}(w)=\bigg(\frac{w_{1}}{1+\wsum},\frac{w_{2}}{1+\wsum},\ldots,\frac{w_{n-2}}{1+\wsum}\bigg).
\]
The $(n-2)\times(n-2)$ total derivative $d\gamma_{n-1}$ is given
by the following matrix

\begin{equation*}
\kbordermatrix{&1&2&3&\ldots& n-2\\  1 &   \dfrac{1 + \wsum - w_1}{(1 + \wsum)^2} &  \dfrac{-w_1}{(1 + \wsum)^2} &\dfrac{-w_1}{(1 + \wsum)^2}  & \ldots & \dfrac{-w_1}{(1 + \wsum)^2} \\ 2&  \dfrac{-w_2}{(1 + \wsum)^2} &      \dfrac{1 + \wsum - w_2}{(1 + \wsum)^2} & \dfrac{-w_2}{(1 + \wsum)^2}  & \ldots & \dfrac{-w_2}{(1 + \wsum)^2} \\ \vdots & \vdots & \vdots & \vdots & \ddots & \vdots \\ n-2 & \dfrac{-w_{n-2}}{(1 + \wsum)^2} & \dfrac{-w_{n-2}}{(1 + \wsum)^2} & \dfrac{-w_{n-2}}{(1 + \wsum)^2}  & \ldots & \dfrac{1 + \wsum - w_{n-2}}{(1 + \wsum)^2} }.
\end{equation*}For each column $k=1,2,\ldots,n-2$ we have the absolute column sum
\[
C_{k}=\frac{1+2\wsum-2w_{k}}{(1+\wsum)^{2}}.
\]
Since $w_{1}\leq w_{2}\leq\ldots\leq w_{n-2}$ on $\deltao$, we have
column $C_{1}$ is maximal. Hence, $\onenorm{d\gamma_{n-1}}=\dfrac{1+2\wsum-2w_{1}}{(1+\wsum)^{2}}$
on $\deltao$.

To bound this norm, observe 
\[
\frac{1+2\wsum-2w_{1}}{(1+\wsum)^{2}}=\frac{1+2\wsum-2w_{1}}{1+2\wsum+\wsum^{2}}\leq\frac{1+2\wsum-2w_{1}}{1+2\wsum-2w_{1}}=1.
\]
Hence $\onenorm{d\gamma_{n-1}}\leq1$ on $\deltao$, proving equation
(\ref{n-1}).

\subsubsection*{Proof of equations (\ref{smalli,j}) and (\ref{bigi,j})}

We first prove equation (\ref{smalli,j}). Assume first that $1\leq i<j\leq n-2.$
We have 
\[
\gamma_{i}\circ\gamma_{j}(w)=(w_{1},w_{2},\ldots,\widehat{w_{i}},\ldots,\widehat{w_{j}},\ldots,w_{n-2},1-\wsum,1,2-w_{j},4-2w_{j}-w_{i}).
\]
Define $\wdenomij\equiv4-2w_{j}-w_{i}.$ Then, after projectivizing
and removing placeholder components we have 
\[
\gamma{}_{i}\circ\gamma_{j}(w)=\bigg(\frac{w_{1}}{\wdenomij},\frac{w_{2}}{\wdenomij},\ldots,\widehat{\frac{w_{i}}{\wdenomij}},\ldots,\widehat{\frac{w_{j}}{\wdenomij}},\ldots,\frac{w_{n-2}}{\wdenomij},\frac{1-\wsum}{\wdenomij},\frac{1}{\wdenomij}\bigg).
\]

The $(n-2)\times(n-2)$ total derivative $d(\gamma_{i}\circ\gamma_{j})$
is given by the matrix\footnotesize

\begin{equation*}
\kbordermatrix{&1&2&\ldots&i-1&i&i+1&\ldots&j-1&j&j+1&\ldots& n-2\\ 1 & \dfrac{1}{\wdenomij} & 0 & \ldots & 0 & \dfrac{w_1}{(\wdenomij)^2} & 0 & \ldots & 0 & \dfrac{2w_1}{(\wdenomij)^2}& 0 & \ldots & 0 \\ 2& 0 & \dfrac{1}{\wdenomij} & \ldots & 0 & \dfrac{w_2}{(\wdenomij)^2} & 0 & \ldots & 0 & \dfrac{2w_2}{(\wdenomij)^2} & 0 & \ldots & 0 \\ \vdots & \vdots & \vdots & \ddots &  \vdots &  \vdots & \vdots & \ddots &  \vdots &  \vdots & \vdots & \ddots & \vdots \\ i-1& 0 & 0 & \ldots & \dfrac{1}{\wdenomij}  & \dfrac{w_{i-1}}{(\wdenomij)^2} & 0 & \ldots & 0 & \dfrac{2w_{i-1}}{(\wdenomij)^2} & 0 & \ldots & 0 \\ i& 0 & 0 & \ldots & 0 & \dfrac{w_{i+1}}{(\wdenomij)^2} & \dfrac{1}{\wdenomij}  & \ldots & 0 & \dfrac{2w_{i+1}}{(\wdenomij)^2} & 0 & \ldots & 0 \\ \vdots & \vdots & \vdots & \ddots &  \vdots &  \vdots & \vdots & \ddots &  \vdots &  \vdots & \vdots & \ddots & \vdots \\ j-2& 0 & 0 & \ldots & 0 & \dfrac{w_{j-1}}{(\wdenomij)^2} & 0 & \ldots & \dfrac{1}{\wdenomij} & \dfrac{2w_{j-1}}{(\wdenomij)^2} & 0 & \ldots & 0 \\ j-1& 0 & 0 & \ldots & 0 & \dfrac{w_{j+1}}{(\wdenomij)^2} & 0 & \ldots & 0 & \dfrac{2w_{j+1}}{(\wdenomij)^2} & \dfrac{1}{\wdenomij} & \ldots & 0 \\ \vdots & \vdots & \vdots & \ddots &  \vdots &  \vdots & \vdots & \ddots &  \vdots &  \vdots & \vdots & \ddots & \vdots \\ n-4 & 0 & 0 & \ldots & 0 & \dfrac{w_{n-2}}{(\wdenomij)^2} & 0 & \ldots & 0 & \dfrac{2w_{n-2}}{(\wdenomij)^2} & 0 & \ldots & \dfrac{1}{\wdenomij} \\ n-3 & \dfrac{-1}{\wdenomij} & \dfrac{-1}{\wdenomij} & \ldots & \dfrac{-1}{\wdenomij} & \frac{-3 - \wsum + 2w_j + w_i}{(\wdenomij)^2} & \dfrac{-1}{\wdenomij} & \ldots & \dfrac{-1}{\wdenomij} & \frac{-2 - 2\wsum + 2w_j + w_i}{(\wdenomij)^2} & \dfrac{-1}{\wdenomij} & \ldots & \dfrac{-1}{\wdenomij} \\ n-2 & 0 & 0 & \ldots & 0 & \dfrac{1}{(\wdenomij)^2} & 0 & \ldots & 0 & \dfrac{2}{(\wdenomij)^2} & 0 & \ldots & 0 \\ }.
\end{equation*}

\normalsize

The absolute column sum of each column $k\neq i,j$ is 
\[
C_{k}=\dfrac{2}{\wdenomij}=\dfrac{2(4-2w_{j}-w_{i})}{(\wdenomij)^{2}}
\]
The absolute column sum of column $i$ is 
\[
C_{i}=\dfrac{4+2\wsum-3w_{j}-2w_{i}}{(\wdenomij)^{2}}
\]
and the absolute column sum of row $j$ is 
\[
C_{j}=\dfrac{4+4\wsum-4w_{j}-3w_{i}}{(\wdenomij)^{2}}.
\]

Subtracting $C_{i}$ from $C_{j}$ we obtain $\dfrac{2\wsum-w_{j}-w_{i}}{(\wdenomij)^{2}}$
which is nonnegative on $\deltao$. Thus $C_{j}\geq C_{i}$. Furthermore
$C_{k}\geq C_{j}$ for each $k\neq i,j$ since 
\[
4+4\wsum-4w_{j}-3w_{i}\leq2(4-2w_{j}-w_{i})\hspace{0.3in}\Leftrightarrow\hspace{0.3in}4\wsum-w_{i}\leq4
\]
and the latter inequality holds since $\wsum\leq\frac{1}{2}$ and
$w_{i}\geq0$ on $\deltao$.

Thus $\onenorm{d(\gamma_{i}\circ\gamma_{j})} \leq \dfrac{2}{\wdenomij}=\dfrac{2}{4-2w_{j}-w_{i}}$
on $\deltao$. Using the bound that each $w_{k}\leq\frac{1}{2}$ on
$\deltao$ we have $\onenorm{d(\gamma_{i}\circ\gamma_{j})}\leq\frac{4}{5}$,
proving equation (\ref{smalli,j}).

To prove equation (\ref{bigi,j}), we now consider $\gamma_{i}\circ\gamma_{j}$
for $1\leq j\leq i<n-2$. We have 
\[
\gamma_{i}\circ\gamma_{j}(w)=(w_{1},w_{2},\ldots,\widehat{w_{j}},\ldots,\widehat{w_{i+1}},\ldots,w_{n-2},1-\wsum,1,2-w_{j},4-2w_{j}-w_{i+1}).
\]
The remainder of the proof for equation (\ref{bigi,j}) is nearly
identical after careful bookkeeping of indices (for example, column
$C_{i+1}$ for $1\leq j\leq i<n-2$ plays the role of $C_{i}$ for
$1\leq i<j\leq n-2$).

\subsubsection*{Proof of equation (\ref{n-2,j})}

We have 
\[
\gamma_{n-2}\circ\gamma_{j}(w)=(w_{1},w_{2},\ldots,\widehat{w_{j}},\ldots,w_{n-2},1,2-w_{j},3+\wsum-2w_{j}).
\]
We define $\wdenommm\equiv3+\wsum-2w_{j}$. Then, after projectivizing
and removing placeholder components, we have 

\[
\gamma_{n-2}\circ\gamma_{j}(w)=\bigg(\dfrac{w_{1}}{\wdenommm},\dfrac{w_{2}}{\wdenommm},\ldots,\widehat{\dfrac{w_{j}}{\wdenommm}},\ldots,\dfrac{w_{n-2}}{\wdenommm},\dfrac{1}{\wdenommm}\bigg).
\]
The $(n-2)\times(n-2)$ total derivative, $d(\gamma_{n-2}\circ\gamma_{j})$
is the matrix \scriptsize

\[ \kbordermatrix{&1&2&\ldots&j-1&j&j+1&\ldots& n-2\\ 1 & \frac{3 + \wsum - 2w_j - w_1}{(\wdenommm)^2} & \dfrac{-w_1}{(\wdenommm)^2} & \ldots & \dfrac{-w_1}{(\wdenommm)^2} & \dfrac{w_1}{(\wdenommm)^2} & \dfrac{-w_1}{(\wdenommm)^2} & \ldots & \dfrac{-w_1}{(\wdenommm)^2} \\ 2 & \dfrac{-w_2}{(\wdenommm)^2} & \frac{3 + \wsum - 2w_j - w_2}{(\wdenommm)^2} & \ldots & \dfrac{-w_2}{(\wdenommm)^2} & \dfrac{w_2}{(\wdenommm)^2} & \dfrac{-w_2}{(\wdenommm)^2} & \ldots & \dfrac{-w_2}{(\wdenommm)^2} \\ \vdots & \vdots & \vdots & \ddots & \vdots  & \vdots  & \vdots & \ddots & \vdots  \\ j-1 & \dfrac{-w_{j-1}}{(\wdenommm)^2} & \dfrac{-w_{j-1}}{(\wdenommm)^2} & \ldots & \frac{3 + \wsum - 2w_j - w_{j-1}}{(\wdenommm)^2} & \dfrac{w_{j-1}}{(\wdenommm)^2} & \dfrac{-w_{j-1}}{(\wdenommm)^2} & \ldots & \dfrac{-w_{j-1}}{(\wdenommm)^2} \\ j & \dfrac{-w_{j+1}}{(\wdenommm)^2} & \dfrac{-w_{j+1}}{(\wdenommm)^2} & \ldots & \dfrac{-w_{j+1}}{(\wdenommm)^2} & \dfrac{w_{j+1}}{(\wdenommm)^2} & \frac{3 + \wsum - 2w_j - w_{j+1}}{(\wdenommm)^2} & \ldots & \dfrac{-w_{j+1}}{(\wdenommm)^2} \\ \vdots & \vdots & \vdots & \ddots & \vdots  & \vdots  & \vdots & \ddots & \vdots  \\ n-3 & \dfrac{-w_{n-2}}{(\wdenommm)^2} & \dfrac{-w_{n-2}}{(\wdenommm)^2} & \ldots & \dfrac{-w_{n-2}}{(\wdenommm)^2} & \dfrac{w_{n-2}}{(\wdenommm)^2} & \dfrac{w_{n-2}}{(\wdenommm)^2} & \ldots & \frac{3 + \wsum - 2w_j - w_{n-2}}{(\wdenommm)^2} \\ n-2 & \dfrac{-1}{(\wdenommm)^2} & \dfrac{-1}{(\wdenommm)^2} &  \ldots & \dfrac{-1}{(\wdenommm)^2} & \dfrac{1}{(\wdenommm)^2} & \dfrac{-1}{(\wdenommm)^2} &  \ldots & \dfrac{-1}{(\wdenommm)^2} \\ }. \]

\normalsize

The absolute column sum for column $k\neq j$ is 
\[
C_{k}=\dfrac{4+2\wsum-3w_{j}-2w_{k}}{(\wdenommm)^{2}}
\]
and the absolute column sum for column $j$ is 
\[
C_{j}=\dfrac{1+\wsum-w_{j}}{(\wdenommm)^{2}}.
\]
Note that $w_{1}\leq w_{k}$ for all $k$, so $C_{1}\leq C_{k}$ for
each $k\neq j$. Furthermore, subtracting column sum $C_{j}$ from
$C_{1}$ and using the trivial bound $w_{k}\leq\frac{1}{2}$ on $\deltao$
for all $k$ we obtain 
\[
\frac{3+\wsum-2w_{j}-2w_{1}}{(\wdenommm)^{2}}\geq\frac{1+\wsum}{(\wdenommm)^{2}}\geq0.
\]
Hence, $C_{1}\geq C_{j}$, and $C_{1}$ is maximal. We have $\onenorm{d(\gamma_{n-2}\circ\gamma_{j})} \leq \dfrac{4+2\wsum-3w_{j}-2w_{1}}{(\wdenommm)^{2}}$
on $\deltao$. Separately bounding the numerator and denominator on
$\deltao$ we have 
\[
4+2\wsum-3w_{j}-2w_{i}\leq4+2\bigg(\frac{1}{2}\bigg)=5,
\]
\[
\wdenommm=3+\wsum-2w_{j}\geq3+(\wsum-w_{j})-w_{j}\geq3-\frac{1}{2}=\frac{5}{2}.
\]
Thus $\onenorm{d(\gamma_{n-2}\circ\gamma_{j})}\leq\dfrac{5}{(\frac{5}{2})^{2}}=\frac{4}{5}$
on $\deltao$. This proves equation (\ref{n-2,j}).

\subsubsection*{Proof of equation (\ref{n-1,i}) and (\ref{n-1,n-2})}

For $i=1,2,\ldots,n-2$ we have 
\[
\gamma_{n-1}\circ\gamma_{i}(w)=(w_{1},w_{2},\ldots,\widehat{w_{i}},\ldots,w_{n-2},1-\wsum,2-w_{i},3-2w_{i})
\]
which, after projectivizing and removing placeholder components, becomes
\[
\gamma_{n-1}\circ\gamma_{i}(w)=\bigg(\dfrac{w_{1}}{3-2w_{i}},\dfrac{w_{2}}{3-2w_{i}},\ldots,\widehat{\dfrac{w_{i}}{3-2w_{i}}},\ldots,\dfrac{w_{n-2}}{3-2w_{i}},\dfrac{1-\wsum}{3-2w_{i}}\bigg).
\]

The $(n-2)\times(n-2)$ total derivative $d(\gamma_{n-1}\circ\gamma_{i})$
is given by 

\[ \kbordermatrix{&1&2&\ldots&i-1&i&i+1&\ldots& n-2\\ 1 &    \dfrac{1}{3-2w_i} & 0 & \ldots & 0 &  \dfrac{2w_1}{(3-2w_i)^2}& 0  & \ldots & 0 \\ 2 & 0 & \dfrac{1}{3-2w_i} & \ldots & 0 & \dfrac{2w_2}{(3-2w_i)^2} & 0 & \ldots & 0 \\ \vdots & \vdots & \vdots & \ddots & \vdots & \vdots & \vdots & \ddots & \vdots \\ i-1 & 0 & 0 & \ldots & \dfrac{1}{3-2w_i} & \dfrac{2w_{i-1}}{(3-2w_i)^2} & 0 & \ldots & 0 \\ i & 0 & 0 & \ldots & 0 & \dfrac{2w_{i+1}}{(3-2w_i)^2} &  \dfrac{1}{3-2w_i} & \ldots & 0 \\ \vdots & \vdots & \vdots & \ddots & \vdots & \vdots & \vdots & \ddots & \vdots \\ n-3 & 0 & 0 & \ldots & 0 & \dfrac{2w_{n-2}}{(3-2w_i)^2} & 0 & \ldots & \dfrac{1}{3-2w_i} \\ n-2 & \dfrac{-1}{3-2w_i} & \dfrac{-1}{3-2w_i} & \ldots &   \dfrac{-1}{3-2w_i} &\dfrac{-1- 2\wsum +2w_i}{(3-2w_i)^2} &  \dfrac{-1}{3-2w_i} & \ldots & \dfrac{-1}{3-2w_i} \\ }. \]

The absolute column sum for column $k\neq i$ is 
\[
C_{k}=\dfrac{2}{3-2w_{i}}
\]
and the absolute column sum for column $i$ is 
\[
C_{i}=\dfrac{1+4\wsum-4w_{i}}{(3-2w_{i})^{2}}.
\]

Each column $C_{k}$ with $k\neq i$ is maximal since 
\[
C_{k}\geq C_{i}\hspace{0.3in}\Leftrightarrow\hspace{0.3in}2(3-2w_{i})\geq1+4\wsum-4w_{i}\hspace{0.3in}\Leftrightarrow\hspace{0.3in}5\geq4\wsum
\]
and $\wsum\leq\frac{1}{2}$ on $\deltao$. Thus $\onenorm{d(\gamma_{n-1}\circ\gamma_{i})} \leq \dfrac{2}{3-2w_{i}}$
on $\deltao$.

When $i=n-2$, we have $w_{n-2}\leq\frac{1}{2}$ on $\core$ and $w_{n-2}\leq\frac{1}{3}$
on $\cusp$. Thus $\onenorm{d(\gamma_{n-1}\circ\gamma_{n-2})}\leq1$
on $\core$ and $\leq\dfrac{6}{7}$ on $\cusp$. This proves equation
(\ref{n-1,n-2}).

For $i\leq n-3$, we have the stronger bound $w_{i}\leq\frac{1}{4}$
on $\core$ and $w_{i}\leq\frac{1}{5}$ on $\cusp$. This gives $\onenorm{d(\gamma_{n-1}\circ\gamma_{i})}\leq\frac{4}{5}$
on $\core$ and $\leq\frac{10}{13}$. This proves equation (\ref{n-1,i}).

\subsubsection*{Proof of equation (\ref{i,n-1,n-2})}

For each $i\leq n-3$ we have 
\[
\gamma_{i}\circ\gamma_{n-1}\circ\gamma_{n-2}(w)=(w_{1},w_{2},\ldots,\widehat{w_{i}},\ldots,w_{n-3},1-\wsum,2-w_{n-2},3-2w_{n-2},6-4w_{n-2}-w_{i})
\]
which, after projectivizing and removing placeholder components, becomes
\[
\gamma_{i}\circ\gamma_{n-1}\circ\gamma_{n-2}(w)=\bigg(\dfrac{w_{1}}{\mu(w)},\ldots,\widehat{\dfrac{w_{i}}{\mu(w)}}\ldots\dfrac{w_{n-3}}{\mu(w)},\dfrac{1-\wsum}{\mu(w)},\dfrac{2-w_{n-2}}{\mu(w)}\bigg)
\]
where $\wdenom\equiv6-4w_{n-2}-w_{i}$. Then the $(n-2)\times(n-2)$
total derivative $d(\gamma_{i}\circ\gamma_{n-1}\circ\gamma_{n-2})$
is 

\[ \kbordermatrix{&1&\ldots&i-1&i&i+1&\ldots&n-3&n-2\\ 1 & \dfrac{1}{\wdenom} & \ldots & 0 & \dfrac{w_1}{(\wdenom)^2} & 0 & \ldots & 0 & \dfrac{4w_1}{(\wdenom)^2} \\ \vdots & \vdots& \ddots & \vdots & \vdots & \vdots & \ddots & \vdots & \vdots \\ i-1 & 0 & \ldots & \dfrac{1}{\wdenom} & \dfrac{w_{i-1}}{(\wdenom)^2} & 0 & \ldots & 0 & \dfrac{4w_{i-1}}{(\wdenom)^2} \\  i & 0 & \ldots & 0 & \dfrac{w_{i+1}}{(\wdenom)^2} & \dfrac{1}{\wdenom} & \ldots & 0 & \dfrac{4w_{i+1}}{(\wdenom)^2} \\  \vdots & \vdots & \ddots & \vdots & \vdots & \vdots & \ddots & \vdots & \vdots \\ n-4 & 0 & \ldots & 0 & \dfrac{w_{n-3}}{(\wdenom)^2} & 0 & \ldots & \dfrac{1}{\wdenom} & \dfrac{4w_{n-3}}{(\wdenom)^2} \\  n-3 & \dfrac{-1}{\wdenom} & \ldots & \dfrac{-1}{\wdenom}& \frac{-5 - \wsum + 4w_{n-2} + w_i}{(\wdenom)^2} &\dfrac{-1}{\wdenom}& \ldots & \dfrac{-1}{\wdenom} & \frac{-2 - 4\wsum + 4w_{n-2} + w_i}{(\wdenom)^2} \\  n-2 & 0 & \ldots & 0 & \dfrac{2-w_{n-2}}{(\wdenom)^2} & 0 & \ldots & 0 & \dfrac{2+w_i}{(\wdenom)^2}  }. \]The
absolute column sum of column $k\neq i,n-2$ is 
\[
C_{k}=\frac{2}{\wdenom}=\frac{2(6-4w_{n-2}-w_{i})}{(\wdenom)^{2}},
\]
whereas the absolute column sum of column $i$ is 
\[
C_{i}=\frac{7+2\wsum-2w_{i}-6w_{n-2}}{(\wdenom)^{2}}
\]
and the absolute column sum of column $n-2$ is 
\[
C_{n-2}=\frac{4+8\wsum-8w_{n-2}-4w_{i}}{(\wdenom)^{2}}.
\]
Subtracting $C_{n-2}$ from $C_{i}$ we obtain 
\[
\frac{3-6\wsum+2w_{n-2}+2w_{i}}{(\wdenom)^{2}}\geq\frac{3-6(\frac{1}{2})}{(\wdenom)^{2}}\geq0
\]
on $\deltao$. This shows $C_{i}\geq C_{n-2}$. 

In fact, each column $C_{k}$ with $k\neq i,n-2$ is maximal since
\[
C_{k}\geq C_{i},k\neq i,n-2\hspace{0.2in}\Leftrightarrow\hspace{0.2in}2(6-4w_{n-2}-w_{i})\geq7+2\wsum-2w_{i}-6w_{n-2}\hspace{0.2in}\Leftrightarrow\hspace{0.2in}5\geq2\wsum+2w_{n-2}
\]
and $\wsum$, $w_{n-2}\leq\frac{1}{2}$ on $\deltao$. Hence $\onenorm{d(\gamma_{i}\circ\gamma_{n-1}\circ\gamma_{n-2})} \leq \dfrac{2}{6-4w_{n-2}-w_{i}}$
on $\deltao$. The denominator is bounded by 
\[
6-4w_{n-2}-w_{i}\geq6-5\bigg(\frac{1}{2}\bigg)=\frac{7}{2}
\]
so $\onenorm{d(\gamma_{i}\circ\gamma_{n-1}\circ\gamma_{n-2})}\leq\frac{4}{7}$
on $\deltao$. This proves equation (\ref{i,n-1,n-2}).

\subsubsection*{Proof of equation (\ref{n-2,n-1,n-2})}

We have 
\[
\gamma_{n-2}\circ\gamma_{n-1}\circ\gamma_{n-2}(w)=(w_{1},w_{2},\ldots,w_{n-3},2-w_{n-2},3-2w_{n-2},5+\wsum-4w_{n-2})
\]
which, after projectivizing and removing placeholder components, becomes
\[
\gamma_{n-2}\circ\gamma_{n-1}\circ\gamma_{n-2}(w)=\bigg(\dfrac{w_{1}}{\wdenomm},\dfrac{w_{2}}{\wdenomm},\ldots,\dfrac{w_{n-3}}{\wdenomm},\dfrac{2-w_{n-2}}{\wdenomm}\bigg)
\]
where $\wdenomm\equiv5+\wsum-4w_{n-2}$. Then the $(n-2)\times(n-2)$
total derivative $d(\gamma_{n-2}\circ\gamma_{n-1}\circ\gamma_{n-2})$
is

\[ \kbordermatrix{&1&2&3&\ldots&n-3&n-2\\ 1 & \frac{5 + \wsum - 4w_{n-2} - w_1}{(\wdenomm)^2} & \dfrac{-w_1}{(\wdenomm)^2} & \dfrac{-w_1}{(\wdenomm)^2} & \ldots &  \dfrac{-w_1}{(\wdenomm)^2} &  \dfrac{3w_1}{(\wdenomm)^2} \\ 2& \dfrac{-w_2}{(\wdenomm)^2} & \frac{5 + \wsum - 4w_{n-2} - w_2}{(\wdenomm)^2} & \dfrac{-w_2}{(\wdenomm)^2} & \ldots &  \dfrac{-w_2}{(\wdenomm)^2} &  \dfrac{3w_2}{(\wdenomm)^2} \\ 3&\dfrac{-w_3}{(\wdenomm)^2} & \dfrac{-w_3}{(\wdenomm)^2} & \frac{5 + \wsum - 4w_{n-2} - w_3}{(\wdenomm)^2} & \ldots &  \dfrac{-w_3}{(\wdenomm)^2} &  \dfrac{3w_3}{(\wdenomm)^2} \\ \vdots&\vdots & \vdots & \vdots & \ddots & \vdots &\vdots \\ n-3&\dfrac{-w_{n-3}}{(\wdenomm)^2} & \dfrac{-w_{n-3}}{(\wdenomm)^2} & \dfrac{-w_{n-3}}{(\wdenomm)^2} & \ldots & \frac{5 + \wsum - 4w_{n-2} - w_{n-3}}{(\wdenomm)^2} &  \dfrac{3w_{n-3}}{(\wdenomm)^2} \\ n-2&\dfrac{-2+w_{n-2}}{(\wdenomm)^2} & \dfrac{-2+w_{n-2}}{(\wdenomm)^2} &\dfrac{-2+w_{n-2}}{(\wdenomm)^2} & \ldots & \dfrac{-2+w_{n-2}}{(\wdenomm)^2} & \frac{1 - \wsum + w_{n-2} }{(\wdenomm)^2} }. \]

The absolute column sum for each column $k\leq n-3$ is 
\[
C_{k}=\dfrac{7+2\wsum-2w_{k}-6w_{n-2}}{(\wdenomm)^{2}}
\]
and the absolute column sum for column $n-2$ is 
\[
C_{n-2}=\dfrac{1+2\wsum-2w_{n-2}}{(\wdenomm)^{2}}.
\]

Subtracting $C_{n-2}$ from $C_{k}$ we obtain 
\[
\frac{6-2w_{k}-4w_{n-2}}{(\wdenomm)^{2}}\geq0
\]
on $\deltao$ (using the bound $w_{k}\leq\frac{1}{2}$ for all $k$).
Hence, $C_{j}\leq C_{k}$ for each $k\neq j$. Of the remaining column
sums, $C_{1}$ is maximal since $w_{1}\leq w_{k}$ for each $k$ on
$\deltao$. Thus, $\onenorm{d(\gamma_{n-2}\circ\gamma_{n-1}\circ\gamma_{n-2})} \leq \frac{7+2\wsum-2w_{1}-6w_{n-2}}{(5+\wsum-4w_{n-2})^{2}}.$

Separately bounding the numerator and the denominator we have 
\[
7+2\wsum-2w_{1}-6w_{n-2}\leq7+1\leq8
\]
\[
5+\wsum-4w_{n-2}\geq5+(\wsum-w_{n-2})-3w_{n-2}\geq5-3\bigg(\frac{1}{2}\bigg)=\frac{7}{2}
\]
so $\onenorm{d(\gamma_{n-2}\circ\gamma_{n-1}\circ\gamma_{n-2})}\leq\frac{32}{49}$
on $\deltao$, proving equation (\ref{n-2,n-1,n-2}).

\subsubsection*{Proof of equation (\ref{n-1,n-1,n-2})}

\label{lasteqn} We have 
\[
\gamma_{n-1}\circ\gamma_{n-1}\circ\gamma_{n-2}(w)=(w_{1},w_{2},\ldots,w_{n-3},1-\wsum,3-2w_{n-2},4-3w_{n-2})
\]
which, after projectivizing and removing placeholder components, becomes
\[
\gamma_{n-1}\circ\gamma_{n-1}\circ\gamma_{n-2}(w)=\bigg(\dfrac{w_{1}}{4-3w_{n-2}},\dfrac{w_{2}}{4-3w_{n-2}},\ldots,\dfrac{w_{n-3}}{4-3w_{n-2}},\dfrac{1-\wsum}{4-3w_{n-2}}\bigg).
\]

The $(n-2)\times(n-2)$ total derivative $d(\gamma_{n-1}\circ\gamma_{n-1}\circ\gamma_{n-2})$
is

\[ \kbordermatrix{&1&2&\ldots&n-3&n-2\\ 1&\dfrac{1}{4-3w_{n-2}} & 0 & \ldots & 0 & \dfrac{3w_1}{(4-3w_{n-2})^2} \\ 2&0 & \dfrac{1}{4-3w_{n-2}} & \ldots & 0 & \dfrac{3w_2}{(4-3w_{n-2})^2} \\ \vdots&\vdots & \vdots & \ddots & \vdots & \vdots \\ n-3&0 & 0 & \ldots & \dfrac{1}{4-3w_{n-2}} & \dfrac{3w_{n-3}}{(4-3w_{n-2})^2}  \\ n-2&\dfrac{-1}{4-3w_{n-2}} & \dfrac{-1}{4-3w_{n-2}} & \ldots & \dfrac{-1}{4-3w_{n-2}} &\dfrac{-1-3\wsum+3w_{n-2}}{(4-3w_{n-2})^2}  \\ }. \]

The absolute column sum for each column $k\leq n-3$ is 
\[
C_{k}=\dfrac{2}{4-3w_{n-2}}
\]
and the absolute column sum for column $n-2$ is 
\[
C_{n-2}=\dfrac{1+6\wsum-6w_{n-2}}{(4-3w_{n-2})^{2}}.
\]

Each column $C_{k}$ for $k\neq n-2$ is maximal because 
\[
C_{k}\geq C_{n-2},k\neq n-2\hspace{0.3in}\Leftrightarrow\hspace{0.3in}2(4-3w_{n-2})\geq1+6\wsum-6w_{n-2}\hspace{0.3in}\Leftrightarrow\hspace{0.3in}7\geq6\wsum.
\]

Thus, $\onenorm{d(\gamma_{n-1}\circ\gamma_{n-1}\circ\gamma_{n-2})} \leq \frac{2}{4-3w_{n-2}}$
on $\deltao$. Since $w_{n-2}\leq\frac{1}{2}$ on $\deltao$ we have
$\onenorm{d(\gamma_{n-1}\circ\gamma_{n-1}\circ\gamma_{n-2})}=\frac{4}{5}$,
proving equation (\ref{n-1,n-1,n-2}).

\bibliographystyle{alpha}
\bibliography{database}
\begin{multicols}{2}

\noindent Alex Gamburd, \\
CUNY Graduate Center, \\
New York, NY, USA \\
{\tt agamburd@gc.cuny.edu}\\

\noindent Michael Magee, \\
Durham University,\\
Durham, UK \\ 
{\tt michael.r.magee@durham.ac.uk}\\

\columnbreak

\noindent Ryan Ronan, \\
Baruch College (CUNY), \\
New York, NY, USA \\
{\tt ryan.ronan@baruch.cuny.edu}\\

\end{multicols}

\end{document}